\newcommand{\R}{\mathbb{R}}
\newcommand{\pa}{\partial}
\newcommand{\eps}{\varepsilon}
\newcommand{\bra}[1]{\left(#1\right)}
\newcommand{\sbra}[1]{\left[#1\right]}
\newcommand{\norm}[1]{\left\|#1\right\|}
\newcommand{\abs}[1]{\left|#1\right|}
\newcommand{\na}{\nabla}
\newcommand{\Lam}{\Lambda}
\newcommand{\mr}{\mathrm{mr}}
\newcommand{\uu}{\mathsf{U}}
\newcommand{\ff}{\mathsf{F}}
\newcommand{\F}{\mathscr{F}}
\newcommand{\wh}{\widehat}
\newcommand{\mM}{\mu_M}
\newcommand{\pO}{p_{\Omega}}
\newcommand{\pM}{p_M}
\newcommand{\intO}{\int_{\Omega}}
\newcommand{\intM}{\int_{M}}
\newcommand{\intQT}{\iint_{Q_T}}
\newcommand{\intMT}{\iint_{M_T}}
\newcommand{\intQtautwo}{\iint_{Q_{\tau,\tau+2}}}
\newcommand{\intMtautwo}{\iint_{M_{\tau,\tau+2}}}
\newcommand{\vat}{\varphi_\tau}
\renewcommand{\L}{\mathscr{L}}
\renewcommand{\H}{\mathscr{H}}
\newcommand{\vk}{\varkappa}
\newcommand{\sumi}{\sum_{i=1}^{m_1}}
\newcommand{\sumj}{\sum_{j=1}^{m_2}}
\newcommand{\LO}[1]{L^{#1}(\Omega)}
\newcommand{\LM}[1]{L^{#1}(M)}
\newcommand{\LQ}[1]{L^{#1}(Q_T)}
\newcommand{\LS}[1]{L^{#1}(M_T)}
\newcommand{\LStau}[1]{L^{#1}(M_{\tau,\tau+1})}
\newcommand{\LStaut}[1]{L^{#1}(M_{\tau,\tau+2})}
\newcommand{\LQtau}[1]{L^{#1}(Q_{\tau,\tau+1})}
\newcommand{\LQtaut}[1]{L^{#1}(Q_{\tau,\tau+2})}
\numberwithin{equation}{section}
\numberwithin{figure}{section}
\newtheorem{theorem}{Theorem}[section]
\newtheorem{lemma}{Lemma}[section]
\newtheorem{remark}{Remark}[section]
\newtheorem{proposition}{Proposition}[section]
\title[Global existence of volume-surface reaction-diffusion systems]{Global well-posedness for volume-surface reaction-diffusion systems}
\author[J. Morgan]{Jeff Morgan}
\address{Jeff Morgan \hfill\break
	Department of Mathematics, 
	University of Houston, Houston, Texas 77004, USA}
\email{jmorgan@math.uh.edu}
\author[B.Q. Tang]{Bao Quoc Tang}
\address{Bao Quoc Tang \hfill\break
	Institute of Mathematics and Scientific Computing, University of Graz, 
	Heinrichstrasse 36, 8010 Graz, Austria}
\email{quoc.tang@uni-graz.at, baotangquoc@gmail.com}
\begin{document}
	\subjclass[2010]{35A01, 35K57, 35K58, 35Q92}
	\keywords{Volume-surface reaction-diffusion systems; Global solutions; Intermediate sum condition; $L^p$-energy functions; Duality method}
\begin{abstract}
	We study the global existence of classical solutions to volume-surface reaction-diffusion systems with control of mass. Such systems appear naturally from modeling evolution of concentrations or densities appearing both in a volume domain and its surface, and therefore have attracted considerable attention. Due to the characteristic volume-surface coupling, global existence of solutions to general systems is a challenging issue. In particular, the duality method, which is powerful in dealing with mass conserved systems in domains, is not applicable on its own. In this paper, we introduce a new family of $L^p$-energy functions and combine them with a suitable duality method for volume-surface systems, to ultimately obtain global existence of classical solutions under a general assumption called the \textit{intermediate sum condition}. For systems that conserve mass, but do not satisfy this condition, global solutions are shown under a quasi-uniform condition, that is, under the assumption that the diffusion coefficients are close to each other. In the case of mass dissipation, we also show that the solution is bounded uniformly in time by studying the system on each time-space cylinder of unit size, and showing that the solution is sup-norm bounded independently of the cylinder. Applications of our results include global existence and boundedness for systems arising from membrane protein clustering or activation of Cdc42 in cell polarization.
\end{abstract}
\maketitle
\tableofcontents

\section{Introduction and Main Results}

\subsection{Problem setting}
Let $\Omega\subset\R^n$, $n\geq 1$, be a bounded domain with smooth boundary $M:= \partial\Omega$. Let $m_1\geq 0$ and $m_2\geq 0$. We consider the following volume-surface reaction-diffusion system
\begin{equation}\left\{
\begin{aligned}
\frac{\partial u_{i}}{\partial t}&=d_{i}\Delta u_{i}+F_{i}(u), &&\text{ on \ensuremath{\Omega\times(0,T)\text{ for \ensuremath{i=1,...,m_1}}}}\\
d_{i}\frac{\partial u_{i}}{\partial\eta}&=G_{i}(u,v), &&\text{ on \ensuremath{M\times(0,T)\text{ for \ensuremath{i=1,...,m_1}}}}\\
\frac{\partial v_{j}}{\partial t}&=\delta_{j}\Delta_M v_{j}+H_{j}(u,v), && \text{ on \ensuremath{M\times(0,T)\text{ for \ensuremath{j=1,...,m_2}}}}\\
u&=u_0 && \text{ on \ensuremath{\overline{\Omega}\times{\{0\}}}}\\
v&=v_0 && \text{ on \ensuremath{M\times{\{0\}}}}
\end{aligned}\right.\label{eq:mainsys}
\end{equation}
where $\pa/\pa{\eta}$ denotes the outward normal flux on the boundary $M$, $u = (u_1,u_2,\ldots, u_{m_1})$, $v = (v_1,v_2, \ldots, v_{m_2})$ are vectors of concentrations, $F_{i}:\R^{m_1}\to \R^{m_1}$ and $G_{i},H_{j}:\R^{m_1+m_2}\to \R^{m_1+m_2}$ satisfy assumptions which will be specified later. The operators $\Delta$ and $\Delta_M$ represent the traditional Laplacian in $\Omega$ and Laplace-Beltrami operator on $M$. The initial
data $u_0=(u_{0,{i}})_{i=1,\ldots, m_1}$ and $v_0=(v_{0,{j}})_{j=1,\ldots, m_2}$ are assumed to be smooth, bounded and component-wise non-negative on $\overline{\Omega}$ and $M$, respectively. Also, when $n=1$, we assume $m_2 = 0$. Throughout this paper, we will assume the following conditions on the domain
\begin{enumerate}[label=(O),ref=O]
	\item\label{O} $\Omega\subset\mathbb R^n$, $n\geq 1$, is bounded domain with smooth boundary $M = \pa\Omega$ such that $\Omega$ lies locally on one side of $M$;
\end{enumerate}
on the diffusion coefficients
\begin{enumerate}[label=(D),ref=D]
	\item\label{D} $d_i > 0$ and $\delta_j > 0$ for all $i=1,\ldots,\; m_1, j=1,\ldots, m_2$;
\end{enumerate}
%on the initial data
%\begin{enumerate}[label=(I),ref=(I)]
%	\item\label{I} $u_0\in L^\infty(\Omega)^{m_1}$, $v_0 \in L^\infty(\Omega)^{m_2}$ are component-wise non-negative;
%\end{enumerate}
and on the nonlinearities
\begin{enumerate}[label=(F\theenumi),ref=F\theenumi]
	\item\label{F0} (local Lipschitz continuity) $F_i: \R_{+}^{m_1} \to \R$, $G_i: \R_+^{m_1+m_2}\to \R_+$, $H_j: \R_+^{m_1+m_2}\to \R$ are locally Lipschitz continuous for $i\in \{1,\ldots, m_1\}$ and $j\in \{1, \ldots, m_2\}$;
	\item\label{F1} (quasi-positivity) $$F_{i}(u),G_{i}(u,v)\ge0\text{ for all }u\in \R_{+}^{m_1}, v\in \R_{+}^{m_2}\text{ with }u_{i}=0, \; \forall i=1,\ldots, m_1,$$
	and
	\begin{equation*}
	H_j(u,v) \geq 0\text{ for all } u\in \R_+^{m_1}, v\in \R_+^{m_2} \text{ with } v_{j} = 0, \; \forall j=1,\ldots, m_2.
	\end{equation*}
	\item\label{F2} (mass control) there exist $a_i,b_j>0$ and $L\in \R$, $K\geq 0$ such that 
	\begin{equation}\label{eq:balsysmain}
	\begin{aligned}\sum_{i=1}^{m_1}a_i F_i(u)&\le L\left(\sum_{i=1}^{m_1}u_i\right) + K,\\ \sum_{i=1}^{m_1}a_i G_i(u,v)+\sum_{j=1}^{m_2}b_j H_j(u,v)&\le L\left(\sumi u_i + \sumj v_j\right) + K,
	\end{aligned}
	\end{equation}
	for all $u\in \R_+^{m_1}$ and $v\in \R_+^{m_2}$;
	\item\label{F3} (polynomial growth) there are constants $K_1, r > 0$ such that, for all $i=1,\ldots, m_1$, $j=1,\ldots, m_2$, 
	\begin{equation}
	F_i(u),G_i(u,v),H_j(u,v)\le K_1\left(\sum_{i=1}^{m_1}u_{i}^{r}+\sum_{j=1}^{m_2}v_{j}^{r}+1\right),\label{eq:polybound}
	\end{equation}
	for all $u\in \R_{+}^{m_1}$ and $v\in \R_{+}^{m_2}$.
\end{enumerate}
With \eqref{F0}, it can be shown that \eqref{eq:mainsys} possesses a unique local strong solution on a maximal interval $(0,T_{\max})$, cf. \cite{sharma2016global}. Moreover,
\begin{equation}\label{blowup-criterion}
	T_{\max}<+\infty \quad \Longrightarrow \quad  \limsup_{t\nearrow T_{\max}}\bra{\sumi \|u_i(t)\|_{\LO{\infty}} +\sumj \|v_j(t)\|_{\LM{\infty}}} = +\infty.
\end{equation}
The quasi-positivity assumption \eqref{F1} assures that the solution to \eqref{eq:mainsys} is (component-wise) non-negative (as long as it exists) if the initial data is non-negative. This assumption also has a simple physical interpretation: if a concentration is zero, it cannot be consumed in a reaction. The condition \eqref{F2} implies that the total mass of the system is finite for all time, which, together with the non-negativity of solutions, implies that
\begin{equation*}
	u_i \in L^\infty(0,T;\LO{1}),\; v_j\in L^\infty(0,T;\LM{1}) \quad \forall i=1,\ldots, m_1, \; j=1,\ldots, m_2, \; T>0.
\end{equation*}
Comparing to \eqref{blowup-criterion}, these a-priori estimates are far from enough to conclude the global existence of solutions to \eqref{eq:mainsys}. In fact, it was shown in \cite{pierre2000blowup} that the above assumptions are very likely not enough to prevent blow-up in finite time even for reaction-diffusion systems in domains. The global well-posedness question for \eqref{eq:mainsys} is, therefore, challenging and generally open. 

\medskip
In this paper, we will show that under the above natural assumptions, and a so-called {\it intermediate sum condition}, there exists a unique global strong solution to \eqref{eq:mainsys}. Moreover, if the total mass is bounded by a constant independent of time, the solution is sup-norm bounded uniformly for all time.
\subsection{State of the art}
Volume-surface (or bulk-surface) systems of the form \eqref{eq:mainsys} have recently attracted a lot of attention. On the one hand, such systems arise naturally from many applications. For example, in asymmetric stem cell division \cite{fellner2016quasi,fellner2018well}, in modeling receptor-ligand dynamics in cell biology \cite{garcia2014mathematical,alphonse2018coupled}, in crystal growth \cite{kwon2001modeling,yeckel2005computer}, in population modeling \cite{berestycki2013influence,berestycki2015effect}, in chemistry with fast sorption \cite{souvcek2019continuum,augner2020analysis}, or in fluid mechanics \cite{mielke2013thermomechanical,glitzky2013gradient}, and so on. 

On the other hand, the volume-surface coupling yields new and highly non-trivial challenges in the analysis of such systems. Of interest and importance is the question of global existence of solutions, and most of the existing works rely heavily on special structures of the considered systems. For instance, when the system is linear, or the reactions have at most linear growth, global (weak, strong) solutions were shown in \cite{fellner2016quasi,hausberg2018well}. For nonlinear coupling on the boundary, \cite{fellner2018well} shows global bounded solutions for the reversible reaction $\alpha \mathcal U \leftrightharpoons \beta \mathcal V$, where $\mathcal U$ and $\mathcal V$ are volumic- and surface-concentrations, respectively. For general systems, \cite{disser2020global} showed that if the $L^\infty$-bound is preserved, then one can get global classical solutions. The works \cite{sharma2016global,sharma2017uniform} showed global existence, and boundedness of solutions to volume-surface systems assuming a linear upper bound for $F_i$, $G_i$, and the sum of $G_i + H_j$, i.e. for any $i\in \{1,\ldots, m_1\}$ and any $j\in \{1,\ldots, m_2\}$,
\begin{equation}\label{linear_growth}
	F_i(u) \leq L\bra{1+\sum_{k=1}^{m_1} u_k}, \quad G_i(u,v) + H_j(u,v) \leq L\bra{1+\sum_{k=1}^{m_1}u_k + \sum_{l=1}^{m_2}v_l}
\end{equation}
for some $L>0$. Due to these restrictions, the results therein are not applicable in many systems arising from cell biology (see e.g. \cite{stolerman2019stability,borgqvist2020cell}).

It is also remarked that significant progress has been made lately concerning mass controlled reaction-diffusion systems {\it in domains}, i.e. when $m_2 = 0$ and $G\equiv 0$. For instance, a major problem concerning the global existence of strong solutions for systems with quadratic nonlinearities in all dimensions has been settled in three recent works \cite{souplet2018global,caputo2019solutions,fellner2020global}. If the nonlinearities have $L^1(\Omega\times(0,T))$-bound a priori, global weak solutions can be shown \cite{pierre2010global}. In particular, by assuming only an entropy inequality, \cite{fischer2015global} shows global existence of renormalized solutions without any restriction on the growth of nonlinearities. We emphasize that none of these works seem to readily extend to the case of volume-surface systems. 

\medskip
In this paper, by introducing a new family of $L^p$-energy function and combining it with duality methods for volume-surface systems, we show the global existence of solutions to a large class of systems of type \eqref{eq:mainsys}, which include the systems in \cite{stolerman2019stability,borgqvist2020cell} as special cases.

\subsection{Main results and Key ideas}
To study the global existence of solutions to \eqref{eq:mainsys}, we assume the so-called {\it intermediate sum condition} (see e.g. \cite{morgan2020boundedness}) for the nonlinearities of \eqref{eq:mainsys}, i.e. there exists an $(m_1+m_2)\times (m_1+m_2)$ lower
triangular matrix $A$ with non-negative elements everywhere and positive entries on the main diagonal,
and constants $L_2\ge0$, $\mM >0$, such that 
\begin{equation}\label{eq:intsum1}
A\begin{bmatrix}F(u)\\ \vec{0}_{m_2}\end{bmatrix}
\le L_2\bra{\sumi u_i^{\pO}+1}\vec{1}_{m_1+m_2},
\end{equation}
\begin{equation}\label{eq:intsum2}
A\begin{bmatrix}G(u,v)\\H(u,v)\end{bmatrix}\le L_2\bra{\sumi u_i^{\pM} + \sumj v_j^{\pM}+1}\begin{bmatrix}\vec{1}_{m_1}\\ \vec{0}_{m_2}\end{bmatrix} + L_2\bra{\sumi u_i^{\mM} + \sumj v_j^{\mM}+1}\begin{bmatrix}\vec{0}_{m_1}\\ \vec{1}_{m_2}\end{bmatrix},
\end{equation}
for all $u\in \R_{+}^{m_1}$ and $v\in \R_{+}^{m_2}$, where the exponent $\pO, \pM$ and $\mM$ will be specified later,
$F(u) = (F_1(u), \ldots, F_{m_1}(u))$ and $G(u,v) = (G_1(u,v), \ldots, G_{m_2}(u,v))$. We denote by $\vec{1}_m$ (rsp. $\vec{0}_m$) the vector of all $1s$ elements (rsp. $0s$ elements) in $\mathbb R^m$. By dividing each row of $A$ by the diagonal element, we can assume w.l.o.g. that $a_{ii} = 1$ for all $i=1,\ldots, m_1+m_2$, {\it and we will do so for the rest of this paper}. 
%\begin{remark}\label{remark1}

\begin{remark}
It's emphasized that assumptions \eqref{eq:intsum1} and \eqref{eq:intsum2} do not require the components of the reaction vector fields $F$, $G$ and $H$ to be at most of order $\pO$, $\pM$ or $\mM$. It simply requires a "trade-off" of higher order terms between components of $F$, $G$ and $H$, and additionally requires that $F_1(u)$ be bounded above by a polynomial in $u$ of order $p_\Omega$ and $G_1(u,v)$ be bounded above by a polynomial in $u$ and $v$ of order $\pM$.
\end{remark}

\medskip
The first main result of this paper is the following theorem.
\begin{theorem}[\eqref{O}-\eqref{D}-\eqref{F0}-\eqref{F1}-\eqref{F2}-\eqref{F3}-\eqref{eq:intsum1}-\eqref{eq:intsum2}]\label{main:thm0}
	Assume
	\begin{equation}\label{pOpM}
	1\leq \pO < 1+\frac{2}{n}, \quad \text{ and } \quad 1\leq \pM < 1+\frac{1}{n},
	\end{equation}
	and
	\begin{equation}\label{mM}
		1\leq \mM \leq 1+\frac{4}{n+1}.
	\end{equation}
	Then for any nonnegative, bounded initial data $(u_0,v_0)\in (W^{2-2/p}(\Omega))^{m_1}\times (W^{2-2/p}(M))^{m_2}$ for some $p>n$ satisfying the compatibility condition
	\begin{equation}\label{compatibility}
		d_i\pa_{\eta}u_{i,0} = G_i(u_0,v_0) \quad \text{on M} \quad \text{ for all } i=1,\ldots, m_1,
	\end{equation}
	the system \eqref{eq:mainsys} has a unique global classical solution. Moreover, if $L < 0$ or $L=K=0$ in \eqref{F2}, then 
	\begin{equation*}
	\sup_{i=1,\ldots, m_1}\sup_{j=1,\ldots, m_2}\sup_{t\geq 0}\bra{\|u_i(t)\|_{L^\infty(\Omega)} + \|v_j(t)\|_{L^\infty(M)}} <+\infty.	
	\end{equation*}
\end{theorem}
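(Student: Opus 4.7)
\emph{Proof strategy.} The goal is to upgrade the a priori $L^\infty(0,T;L^1)$ bound coming from \eqref{F2} to a full $L^\infty$ bound, which by the blow-up criterion \eqref{blowup-criterion} will force $T_{\max}=+\infty$. The lower-triangular normalization $a_{ii}=1$ of the matrix $A$ in \eqref{eq:intsum1}--\eqref{eq:intsum2} makes it natural to proceed row by row: for each $k$, row $k$ controls the ``new'' nonlinearities $F_k$, $G_k$, $H_k$ by polynomials of orders $\pO$, $\pM$, or $\mM$, up to contributions from unknowns in rows $1,\dots,k-1$ which will already be under control at that stage. I therefore organize the proof as an induction over the rows of $A$, beginning with a volume--surface duality step and ending with a Moser iteration.

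\emph{Step 1: initial volume--surface duality.} Summing the first row of the bulk equations against the first row of $A$, and similarly on the surface, yields scalar quasi-parabolic inequalities for $z_1=\sum_\ell a_{1\ell}u_\ell$ and a parallel surface combination $\tilde z_1$ of the form
\begin{equation*}
\pa_t z_1-\Delta(D(x,t)z_1)\le L_2\Bigl(\sumi u_i^{\pO}+1\Bigr),\quad D(x,t)\pa_\eta z_1\le L_2\Bigl(\sumi u_i^{\pM}+\sumj v_j^{\pM}+1\Bigr),
\end{equation*}
where $D(x,t)$ is a strictly positive, bounded convex combination of the $d_i$'s. Testing against the solution of a suitable backward parabolic problem with a Neumann-type surface source, and exploiting the maximal regularity of the heat equation in $\Omega$ and on $M$, the thresholds $\pO<1+\tfrac{2}{n}$ and $\pM<1+\tfrac{1}{n}$ are exactly what is needed to close the duality and produce $L^{\pO+\eta}(Q_T)$ and $L^{\pM+\eta}(M_T)$ estimates for $z_1$ and $\tilde z_1$ for some $\eta>0$. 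This is the technical heart of the proof: unlike the classical scalar duality, one must couple a volume inequality to a surface inequality through the Neumann flux, and combine the regularizing properties of $\Delta$ on $\Omega$ and $\Delta_M$ on $M$ (whose intrinsic dimensions differ by one) so that precisely these two thresholds emerge.

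\emph{Step 2: $L^p$-energy bootstrap and Moser iteration.} Assuming inductively that $u_1,\dots,u_{k-1}$ and $v_1,\dots,v_{k-1}$ are bounded in $L^q$ for some large $q$, the same row summation applied to row $k$ produces a similar inequality for combinations $z_k$ and $\tilde z_k$. Testing with $z_k^{p-1}$ (and $\tilde z_k^{p-1}$) and integrating by parts yields the family of $L^p$-energy inequalities announced in the abstract; the dissipation $\int|\nabla z_k^{p/2}|^2$, together with Gagliardo--Nirenberg in $\Omega$, its counterpart on $M$, and the trace inequality, controls higher Lebesgue norms on both sides of the coupling. Combined with the already established $L^q$ bounds on rows $<k$, the right-hand side can be absorbed; the surface hypothesis $\mM\le 1+\tfrac{4}{n+1}$ is precisely what enables this step on $M$ (whose intrinsic dimension is $n-1$), accommodating also the trace coupling of order $\pM$. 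Raising $p$ incrementally gives $L^p$-bounds for all $p<\infty$, and a standard Moser iteration on each parabolic equation (with the now-bounded right-hand sides and boundary terms) upgrades this to $L^\infty$. Global existence then follows from \eqref{blowup-criterion}, and classical regularity from parabolic $L^p$/Schauder theory together with the compatibility condition \eqref{compatibility}.

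\emph{Step 3: uniform-in-time bound.} When $L<0$ or $L=K=0$, the $L^1$-mass of the solution is bounded independently of time. To promote the above estimates to time-uniform ones, I repeat Steps 1--2 on the cylinders $Q_{\tau,\tau+2}$ and $M_{\tau,\tau+2}$ with smooth temporal cutoffs $\vat$ supported on $(\tau,\tau+2)$ and equal to $1$ on $(\tau+1,\tau+2)$. The duality and energy constants depend only on the uniform $L^1$-mass and on the geometric data, not on $\tau$, so the resulting estimates on $Q_{\tau+1,\tau+2}$ and $M_{\tau+1,\tau+2}$ are uniform in $\tau\ge 0$, yielding the claimed global-in-time $L^\infty$ bound.
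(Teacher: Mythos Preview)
Your Step 2 contains a genuine gap that breaks the argument. You claim that testing the $k$-th row combination $z_k=\sum_\ell a_{k\ell}u_\ell$ against $z_k^{p-1}$ and integrating by parts produces the dissipation $\int|\nabla z_k^{p/2}|^2$. But the diffusion part of the equation for $z_k$ is $\sum_\ell a_{k\ell}d_\ell\Delta u_\ell$, not $d\,\Delta z_k$ for a single $d$. After integration by parts the gradient term is
\[
-(p-1)\int z_k^{p-2}\Bigl(\sum_m a_{km}\nabla u_m\Bigr)\cdot\Bigl(\sum_\ell a_{k\ell}d_\ell\nabla u_\ell\Bigr),
\]
which is an indefinite quadratic form in the $\nabla u_\ell$ once the $d_\ell$ differ (take $m_1=2$, $a_{k1}=a_{k2}=1$, $d_1\ll d_2$, $\nabla u_1=-2\nabla u_2$ to get a negative sign). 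The trick of writing $\sum_\ell a_{k\ell}d_\ell u_\ell=D(x,t)z_k$ with bounded $D$, which you invoke in Step~1, is tailored to \emph{duality} (test against a dual solution, no derivatives of $D$ appear); it does not help for $L^p$-energy testing, where you would need control of $\nabla D$ that you do not have. This is precisely the obstruction the paper is designed to overcome.

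The paper's route is structurally different from your row-by-row induction. For the \emph{volume} variables it builds a single $L^p$-energy
\[
\L_p[u]=\int_\Omega\sum_{|\beta|=p}\binom{p}{\beta}\theta^{\beta^2}u^\beta
\]
over all $u_i$ simultaneously, with weights $\theta_i$ chosen so large and in a specific order (dictated by the triangular structure of $A$) that (i) the resulting gradient matrix becomes diagonally dominant, hence positive definite, and (ii) each coefficient vector $(\theta_i^{2\beta_i+1})_i$ lies in the cone where Lemma~\ref{lem6} applies. This yields the crucial estimate \eqref{eps_critical}, which controls $\|u_i\|_{L^p(Q_T)}$ and $\|u_i\|_{L^p(M_T)}$ up to an $\varepsilon$-small remainder in $\|v_j\|_{L^p(M_T)}$. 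Only \emph{then} is duality used, and only for the \emph{surface} variables $v_j$, row by row through the dual problem \eqref{dual_eq}; here the thresholds on $\pO,\pM$ enter through the interpolation Lemma~\ref{prepare}, while $\mM\le 1+\tfrac{4}{n+1}$ enters because the maximal-regularity constant satisfies $C^M_{\mathrm{mr},2}\le 1$, making condition \eqref{quasi-uniform} automatic at $\Lambda=2$. Your Step~3 (time-uniform bounds via cutoffs on unit cylinders) is essentially what the paper does; the issue is getting there.
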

\begin{remark}[Improvements or generalizations]\label{remark0}\hfill\
	\begin{itemize}
		\item The first remark is that the smoothness of initial data as well as the compatibility condition \eqref{compatibility} are only used 
		to obtain a local classical solution (see e.g. \cite{sharma2016global} or \cite{hausberg2018well}).  We believe that this local existence can in fact be proved for merely bounded initial data, though, due to the volume-surface coupling, the proof is more delicate comparing to the case of systems in domains (see e.g. \cite{alikakos1981regularity}). The details are left for the interested reader.
		\item The intermediate sum conditions involving $F$ and $G$ in \eqref{eq:intsum1} and \eqref{eq:intsum2} are used to construct $L^p$-energy functions, which is essential in the proof of global existence. In fact, one can construct such functions under more general (but technical) assumptions (see Remark \ref{remark2}). We choose to present Theorem \ref{main:thm0} under the intermediate sum conditions \eqref{eq:intsum1} and \eqref{eq:intsum2} as they are more constructive, and appear naturally in many applications (see Section \ref{sec:application}).
		\item The condition of $\mM$ in \eqref{mM} can be slightly improved as
		\begin{equation*}
		\mM < 1 + \frac{2(2+\eps)}{n+1}
		\end{equation*}
		for a sufficiently small $\eps>0$.
		\item The assumption $L<0$ or $L=K=0$ is used obtain uniform-in-time $L^1$-bounds (see Lemma \ref{L1bound_cylinder}), i.e.
		\begin{equation*}
			\sup_{t\geq 0}\bra{\|u_i(t)\|_{\LO{1}} + \|u_i\|_{L^1(M\times{(t,t+1)})} + \|v_j(t)\|_{\LM{1}}} < +\infty,
		\end{equation*}
		which eventually leads to the uniform-in-time $L^\infty$-bound. If this $L^1$-bound can be obtain differently, for instance, by using special structures of a specific model, the assumption $L <0 $ or $L=K=0$ can be removed.
		\item If $L<0$ and $K=0$ in \eqref{F2}, we can show that the global solution decays exponentially to zero as $t\to \infty$. The details are left for the interested reader.
		\item The sub-critical order of $\pO$ (and $\pM$) in \eqref{pOpM} is typical for reaction-diffusion systems (in domains) once an $L^1$-bound is known (see e.g. \cite{alikakos1979lp}). It is again emphasized that we do not assume the nonlinearities, but only an intermediate sum of them, to have the growth of order $\pO$.
		\item It's finally remarked that the results of Theorem \ref{main:thm0} are new even in the case without surface concentrations, i.e. $m_2 = 0$.
	\end{itemize}
\end{remark}

Let us recall that for reaction-diffusion systems \textit{in domains}, i.e. $m_2=0$ and homogeneous boundary conditions $G_i \equiv 0$, $\forall i=1,\ldots, m_1$, the global existence with control of mass and intermediate sum conditions has been successfully relied on the famous duality method, cf. \cite{pierre2010global,morgan2020boundedness}. Thanks to this method, the control of mass condition implies an $L^{2+\eps}$-estimate a-priori. This, together with the intermediate sum condition, allows the use of a bootstrap procedure to ultimately obtain $L^{\infty}$-estimate, which is sufficient for the global existence. It is remarked that the duality is not only crucial to get the initial $L^{2+\eps}$-bound, but also important in the bootstrapping procedure. Such a strategy, unfortunately, fails to apply to {\it volume-surface systems} of the form \eqref{eq:mainsys}, see \cite{morgan2019martin}. In this paper, our {\it first key idea} to overcome this difficulty is to introduce a new family of $L^p$-energy functions. Some preliminary ideas of such $L^p$-energy functions have been carried out also for reaction-diffusion systems, but have been less noticed comparing to the duality method\cite{malham1998global,kouachi2001existence,abdelmalek2007proof}\footnote{Aside from this method's technicality, another reason for this ignorance might be that the elegant duality method, cf. \cite{pierre2000blowup}, has proved to be very efficient in studying reaction-diffusion systems with control of mass, see also the survey \cite{pierre2010global}. A recent work \cite{sharma2020global} studied reaction-diffusion systems with nonlinear boundary conditions to which this $L^p$-energy method is well adapted while the duality method seems not. It is also remarked that the assumption therein do not allow for the use of intermediate sums, and restrict to primary application of the results to two component systems.}. The main aim is to provide a generalization of the usual method of constructing $L^p$-energy functions by (traditionally) multiplying both sides of a parabolic equation for a function $w(x,t)$ by $w(x,t)^{p-1}$. Instead, we consider $L^p$-energy functions consisting of all (mixed) multi-variable polynomials of order $p$ with carefully chosen coefficients. Certainly, the essential difficulty is to choose an $L^p$-energy function so that it is ``compatible" with both diffusion and reaction parts of the system, i.e. its evolution in time should lead to useful a-priori estimates. We show in this paper that the intermediate sum conditions \eqref{eq:intsum1} and \eqref{eq:intsum2} allow us to find such an energy function. Yet, this strategy alone is not enough to obtain sufficient estimates for global existence due to the surface concentrations. Our {\it second key idea} is, therefore, to combine the constructed $L^p$-energy functions with a duality method for volume-surface reaction-diffusion systems. This method has been used in previous work, see e.g. \cite{sharma2016global,morgan2019martin}, but due to the lack of $L^p$-estimates obtained in the current work, it has only applied to systems with restrictive conditions, for instance under assumptions \eqref{linear_growth}. We stress, as it will become apparent in our paper, that only the combination of these two ideas makes it possible to obtain global existence of \eqref{eq:mainsys} under the general assumptions \eqref{eq:intsum1} and \eqref{eq:intsum2}.

\medskip
Let us briefly sketch the main ideas of the proof of Theorem \ref{main:thm0}, which can be roughly divided in several steps.
\begin{itemize}
	\item \textbf{Step 1}. Firstly, by \eqref{F2} one has
	\begin{equation*}
		\|u_i\|_{L^\infty(0,T;\LO{1})}, \|u_i\|_{L^1(M\times(0,T))}, \|v_j\|_{L^\infty(0,T;\LM{1})} \leq C_T
	\end{equation*}
	where $C_T$ is a constant depending on the time horizon $T>0$.
	
	\medskip
\item \textbf{Step 2}. The intermediate sum condition \eqref{eq:intsum1} allows us to construct for any $2\leq p\in \mathbb N$ an $L^p$-energy function of the form
\begin{equation*}
\L_p[u](t) = \intO \sum_{|\beta|=p}\begin{pmatrix}
p\\ \beta 	\end{pmatrix}\theta^{\beta^2}u^{\beta}(x,t)
\end{equation*}
with the convention $u^{\beta} = \prod\limits_{i=1}^{m_1}u_i^{\beta_i}$, $\theta^{\beta^2} = \prod\limits_{i=1}^{m_1}\theta_i^{\beta_i^2}$, and $\begin{pmatrix}p\\\beta\end{pmatrix} = \dfrac{p!}{\beta_{1}!\beta_2!\cdots\beta_{m_1}!}$, where $\theta\in (0,\infty)^{m_1}$ are chosen suitably\footnote{It is remarked that such a function $\L_p[u]$ can be constructed under a more general but less constructive condition than \eqref{eq:intsum1} (see Lemma \ref{lem6} and Remark \ref{remark2}).}. Since all $u_i$ are non-negative, $\bra{\L_p[u]}^{1/p}$ is an equivalent norm to the usual $L^p$-norm of $u$. 
Due to the volume-surface coupling, it does not seem possible to show that $\L_p[u]$ is non-increasing in time, or even bounded. Instead, one obtains for any $\eps>0$ a constant $C_\eps>0$ such that 
\begin{equation}\label{intro1}
\quad \bra{\L_p[u]}' + C\sumi\bra{\intO u_i^{p-1+\pO} + \intM u_i^{p-1+\pM}} \leq C_{\eps} + \eps\sumj\intM v_j^{p-1+\pM}.
\end{equation}
This estimate is {\it crucial} in the analysis of this paper. The ``left over" $L^p$-integrals of surface concentrations $v_j$ in \eqref{intro1} are treated using a duality method in the next steps.

\medskip
\item {\bf Step 3}. We derive an improved duality estimate using the dual problem suited for volume-surface systems
\begin{equation*}
\begin{cases}
\pa_t \phi + \Delta \phi = 0, & (x,t)\in \Omega\times(0,T),\\
\pa_t \phi + \delta \Delta_M \phi = -\psi, &(x,t)\in M\times (0,T),\\
\phi(x,T) = 0, &x\in\overline{\Omega} 
\end{cases}
\end{equation*}
and \eqref{intro1} to obtain
\begin{equation}\label{twoplus}
\sumi\bra{\|u_i\|_{L^{2+\eps}(\Omega\times(0,T))}+\|u_i\|_{L^{2+\eps}(M\times(0,T))}} + \sumj\|v_j\|_{L^{2+\eps}(M\times(0,T))} \leq C_T
\end{equation}
for some $\eps>0$. %Note that for this we do not use the assumption \eqref{eq:intsum2}.

\medskip
\item {\bf Step 4.} By using the estimates in \eqref{twoplus} and the duality method, we are able to show: for any $p>1$, any $k\in \{1,\ldots, m_2\}$, and any $\eps>0$, there exists $C_{T,\eps}>0$ such that
\begin{equation*}
\|v_k\|_{L^p(M\times(0,T))} \leq C_{T,\eps} + C_T\sum_{j=1}^{k-1}\|v_j\|_{L^p(M\times(0,T))} + \eps\sumj \|v_j\|_{L^p(M\times(0,T))}.
\end{equation*}
This ultimately leads to the boundedness of $v_j$ in $L^p(M\times(0,T))$, and consequently, thanks to \eqref{intro1}, the boundedness of $u_i$ in $L^p(\Omega\times(0,T))$ as well as in $L^p(M\times(0,T))$.

From $u_i\in L^p(\Omega\times(0,T))$, $u_i\in L^p(M\times(0,T))$, and $v_j\in L^p(M\times(0,T))$ for all $p>1$, by using the polynomial growth of the nonlinearities in \eqref{F3} and regularization of the heat operator with inhomogeneous boundary conditions, we obtain finally $u_i\in L^\infty(\Omega\times(0,T))$ and $v_j\in L^\infty(M\times(0,T))$, which concludes the global existence of bounded solutions.

\medskip
\item {\bf Step 5}. To obtain uniform-in-time bounds of the global solution, we use for each $\tau\in \mathbb N$, a smooth cut-off function $\vat: \R \to [0,1]$ with $\vat|_{(-\infty,\tau]} = 0$ and $\vat|_{[\tau,\infty)} = 1$ to study \eqref{eq:mainsys} on the cylinder $\Omega\times(\tau,\tau+2)$. By repeating the previous steps on this cylinder, we obtain that $u_i$ and $v_j$ are bounded in $L^\infty(\Omega\times(\tau,\tau+2))$ and $L^\infty(M\times(\tau,\tau+2))$, respectively, {\it uniformly in $\tau\in \mathbb N$}. This concludes the uniform-in-time bounds of the global solution, and consequently completes the proof of Theorem \ref{main:thm0}.
\end{itemize}

\medskip
It is noticed from the proof of Theorem \ref{main:thm0} that the $L^p$-energy method is used for the volume concentrations, while the duality method is well adapted for the surface concentration. An improved duality method, see e.g. \cite{canizo2014improved}, shows that one can deal with higher order nonlinearities in the intermediate sums by assuming {\it quasi-uniform diffusion coefficients}, i.e. when the diffusion coefficients are not too different from each other. Following this idea, the second main result of this paper shows global existence of strong solutions to \eqref{eq:mainsys} with large order $\mM$ of the nonlinearities on the boundary, assuming that the diffusion coefficients $\delta_j$ are quasi-uniform. To state our second main theorem, we denote by
\begin{equation}\label{dmaxmin}
\delta_{\max} = \max\{\delta_1,\ldots, \delta_{m_2}\}, \quad \delta_{\min} = \min\{\delta_1,\ldots, \delta_{m_2}\}.
\end{equation}
\begin{theorem}[\eqref{O}-\eqref{D}-\eqref{F0}-\eqref{F1}-\eqref{F2}-\eqref{F3}-\eqref{eq:intsum1}-\eqref{eq:intsum2}-\eqref{pOpM}]\label{thm:main1}
	Let $\mM > 0$ be fixed. Assume that there exists a constant $\Lam>0$ such that
	\begin{equation}\label{mM_general}
		\Lam\geq \frac{(\mM-1)(n+1)}{2}, \quad \text{ or equivalently } \quad \mM \leq 1+\frac{2\Lam}{n+1}
	\end{equation}
	and
	\begin{equation}\label{quasi-uniform}
	\frac{\delta_{\max}-\delta_{\min}}{\delta_{\max}+\delta_{\min}}C_{\mr,\Lambda'}^M < 1,
	\end{equation}
	where $\Lambda' = \Lambda/(\Lambda - 1)$, and $C_{\mr,\Lambda'}^M$, which depends only on $\Lam$ and $M$, is the maximal regularity constant in Lemma \ref{mr_constant}. Then for any nonnegative, initial data $(u_0,v_0)\in (W^{2-2/p}(\Omega))^{m_1}\times (W^{2-2/p}(M))^{m_2}$ for some $p>n$ satisfying the compatibility condition \eqref{compatibility}, the system \eqref{eq:mainsys} has a unique nonnegative, global strong solution. Moreover, if $L<0$ or $L=K=0$ in \eqref{F2}, the solution is bounded uniformly in time, i.e.
	\begin{equation*}
	\sup_{i=1,\ldots, m_1}\sup_{j=1,\ldots, m_2}\sup_{t\geq 0}\bra{\|u_i(t)\|_{L^\infty(\Omega)} + \|v_j(t)\|_{L^\infty(M)}} <+\infty.
	\end{equation*}
\end{theorem}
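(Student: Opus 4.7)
The plan is to follow the same five-step architecture used for Theorem \ref{main:thm0}, replacing only the $L^{2+\eps}$-duality step (Step~3) by an \emph{improved duality} estimate adapted to the volume--surface setting; everything else then carries over with the same $L^p$-energy machinery built from the intermediate sum conditions \eqref{eq:intsum1}--\eqref{eq:intsum2}. The parameter $\Lambda$ in \eqref{mM_general} plays the role of the target exponent: the improved duality will produce an $L^{\Lambda}$-bound for $v$ on $M_T$, and \eqref{mM_general} is precisely what is needed so that the surface nonlinearities of order $\mM$ can be absorbed in the evolution of the $L^p$-energy function $\L_p[u]$.

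Concretely, Steps~1 and~2 are identical to Theorem \ref{main:thm0}: the mass control \eqref{F2} yields the usual $L^\infty(0,T;L^1)$ bounds together with an $L^1(M_T)$-bound for the $u_i$, and the intermediate sum condition \eqref{eq:intsum1} (combined with $\pO<1+2/n$) produces the key differential inequality
\begin{equation*}
\bigl(\L_p[u]\bigr)' + C\sumi\Bigl(\intO u_i^{p-1+\pO}+\intM u_i^{p-1+\pM}\Bigr)
\le C_\eps + \eps\sumj \intM v_j^{p-1+\pM}.
\end{equation*}
Step~5 (cut-off in time on cylinders $\Omega\times(\tau,\tau+2)$) is also unchanged and gives uniform-in-time bounds under $L<0$ or $L=K=0$ once the global existence on any $(0,T)$ is in hand. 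Thus the heart of the proof is the new Step~3/Step~4.

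For Step~3 I would use the volume--surface dual system
\begin{equation*}
\begin{cases}
\pa_t\phi + \Delta\phi = 0, & \Omega\times(0,T),\\[2pt]
\pa_t\phi + \tfrac{\delta_{\max}+\delta_{\min}}{2}\Delta_M\phi = -\psi + \bigl(\tfrac{\delta_{\max}+\delta_{\min}}{2}-\delta_j\bigr)\Delta_M\phi, & M\times(0,T),\\[2pt]
\phi(\cdot,T) = 0,
\end{cases}
\end{equation*}
with $\psi\in L^{\Lambda'}(M_T)$, $\Lambda' = \Lambda/(\Lambda-1)$. Testing the equation for $v_j$ (and a suitable linear combination of the $u_i$'s given by the first block of the intermediate-sum matrix $A$) against $\phi$ and integrating by parts on $\Omega$ and $M$, the boundary integrals generated by the bulk equation are absorbed by the boundary coupling $d_i\pa_\eta u_i = G_i(u,v)$ and by the $\sumi\intM u_i^{p-1+\pM}$ term already controlled in Step~2. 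The remaining surface term is a standard improved-duality fixed point: using the maximal regularity estimate of Lemma \ref{mr_constant},
\begin{equation*}
\|\Delta_M\phi\|_{L^{\Lambda'}(M_T)}\le C_{\mr,\Lambda'}^M\,\|\psi\|_{L^{\Lambda'}(M_T)},
\end{equation*}
together with the bound $|\tfrac{\delta_{\max}+\delta_{\min}}{2}-\delta_j|\le \tfrac{\delta_{\max}-\delta_{\min}}{2}$, the contraction condition \eqref{quasi-uniform} closes the estimate and yields
\begin{equation*}
\sumj\|v_j\|_{L^{\Lambda}(M_T)} \le C_T\Bigl(1+\text{terms already bounded}\Bigr).
\end{equation*}

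For Step~4 I then insert this $L^{\Lambda}(M_T)$-bound back into the differential inequality of Step~2. The choice \eqref{mM_general}, i.e.\ $p-1+\mM \le p + \tfrac{2\Lambda}{n+1}$, is tailored so that, by a parabolic Sobolev interpolation on $M$ between the $L^p$-energy (which dominates $\sumj\intM v_j^{p-1+\mM}\cdot v_j$ type terms through Gagliardo--Nirenberg on $M$) and the $L^{\Lambda}$-bound just obtained, the last term on the right of \eqref{intro1} is controlled for every $p\in\mathbb N$. Bootstrapping in $p$ gives $u_i, v_j\in L^\infty(0,T;L^p)$ for all $p<\infty$, after which the polynomial-growth assumption \eqref{F3}, together with heat-kernel / surface-heat-kernel regularization for the inhomogeneous boundary problem, upgrades to $L^\infty$-bounds, concluding global existence. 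The main obstacle, as in the Theorem~\ref{main:thm0} argument, is carrying out the fixed-point contraction in the coupled volume--surface dual system cleanly, i.e.\ making sure that the bulk boundary flux $\pa_\eta\phi$ generated on $M$ does not spoil the $L^{\Lambda'}$ balance on the surface; this is exactly where the Step~2 estimate on $\sumi\intM u_i^{p-1+\pM}$ is needed as a buffer.
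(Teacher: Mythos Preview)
Your Steps~1, 2, and~5 are correct and match the paper. Step~3 is also essentially the paper's Proposition~\ref{pro:duality}, with one important omission: from \eqref{quasi-uniform} alone you only get $\|v_j\|_{L^{\Lambda}(M_T)}$, and this falls short at the borderline of \eqref{mM_general}. The paper observes (Lemma~\ref{lem:improved_quasi_uniform}) that \eqref{quasi-uniform} is an \emph{open} condition, so it also holds with $\Lambda$ replaced by $\Lambda+\vk$ for some $\vk>0$; the improved duality then yields $u_i,v_j\in L^{\Lambda+\vk}$, and that extra room is what makes the subsequent bootstrap close.

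Step~4 is where your proposal has a genuine gap. The $L^p$-energy $\L_p[u]$ involves only the volume concentrations $u_i$; it gives no gradient control of $v_j$ on $M$, so there is nothing to interpolate against in a Gagliardo--Nirenberg argument for $v$. Moreover the right side of \eqref{intro1} carries $v_j^{\,p-1+\pM}$, not $v_j^{\,p-1+\mM}$: the exponent $\mM$ does not enter there at all, so your explanation of the role of \eqref{mM_general} is misplaced. In the paper, $\mM$ enters through a \emph{second}, different duality argument (Lemma~\ref{lem:dual1}): for each $k$ one solves the dual problem \eqref{dual_eq} with $\delta=\delta_k$ and tests the equation for $v_k$ against $\phi$. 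The lower-triangular structure of $A$ in \eqref{eq:intsum2} bounds $H_k$ by $-\sum_i a_{(k+m_1)i}G_i-\sum_{j<k}a_{(k+m_1)(j+m_1)}H_j+L_2(\sum_i u_i^{\mM}+\sum_j v_j^{\mM}+1)$. The $G_i$ terms are converted via $G_i=d_i\pa_\eta u_i$ into bulk integrals controlled by \eqref{ff4}; the $H_j$ terms for $j<k$ produce $\|v_j\|_{L^p(M_T)}$; and the dangerous terms $\int_{M_T}\phi\, w^{\mM}$ are split by H\"older as $\|\phi\|_{L^{(p')^*}}\|w\|_{L^{(\mM-\alpha)r}}^{\mM-\alpha}\|w\|_{L^p}^{\alpha}$ with $\alpha\in(0,1)$ chosen so that $(\mM-\alpha)r<\Lambda+\vk$. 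This last inequality is precisely where \eqref{mM_general} and the $\vk>0$ from Step~3 are used, and it fails if $\vk=0$. The outcome is
\[
\|v_k\|_{L^p(M_T)}\le C_{T,\eps}+C_T\sum_{j<k}\|v_j\|_{L^p(M_T)}+\eps\sum_{j}\|v_j\|_{L^p(M_T)},
\]
and a triangular induction (Lemma~\ref{elementary}) closes this to $\|v_j\|_{L^p(M_T)}\le C_{T,p}$ for every $p$; only then is this fed back into \eqref{ff4} to bound the $u_i$.
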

\begin{remark}\label{remark00}\hfill\
	\begin{itemize}
		\item We will show in this paper that for any fixed $\delta_{\max}$ and $\delta_{\min}$, condition \eqref{quasi-uniform} is always satisfied with $\Lam = 2$. This implies that Theorem \ref{main:thm0} is in fact a consequence of Theorem \ref{thm:main1}.
		\item We see that for fixed $\mM>0$, condition \eqref{quasi-uniform} is fulfilled if we fix $\Lam = (\mM-1)(n+1)/2$ and require the surface diffusion coefficients $\delta_j$ to be close enough to each other (relatively to their sums). Since $C_{\mr,q}^M$ is increasing as $p\searrow 1$ and $\lim_{q\searrow 1}C_{\mr,q}^M = +\infty$, \eqref{quasi-uniform} means that $\delta_j$ are required to get closer to each other as the nonlinearity order $\mM$ in the intermediate sum increases.
		\item Similarly to Remark \ref{remark0}, we believe that the results of Theorem \ref{thm:main1} still hold true for non-negative, and bounded initial data $(u_0,v_0)\in L^\infty(\Omega)^{m_1}\times L^\infty(M)^{m_2}$.
	\end{itemize}
\end{remark}
Thanks to the quasi-uniform condition \eqref{quasi-uniform}, one can use the duality method to obtain some $L^{\Lam}$-estimates on $u_i$ and $v_j$, which should form the starting point of a bootstrap argument. Unfortunately, $L^\Lam$-estimates just fall short in case $\Lam$ satisfies \eqref{mM_general} with an equality. To overcome this difficulty, an important observation is that the condition \eqref{quasi-uniform} is ``open" in the sense that if \eqref{quasi-uniform} is true for some $\Lam$, then it also holds for $\Lam+\eps$ with $\eps>0$ small enough\footnote{This was observed and utilized in many recent works, see e.g. \cite{canizo2014improved,pierre2017global}.}. This observation allows us to use an improved duality argument to prove that $u_i$ are bounded in $L^{\Lam+\eps}(\Omega\times(0,T))$ and $L^{\Lam+\eps}(M\times(0,T))$, and $v_j$ are bounded in $L^{\Lam+\eps}(M\times(0,T))$, for some small $\eps>0$. Starting from these estimates, we can use the duality method as in {\bf Step 4} above to get $L^p$-estimates for the solution for all $p\geq 1$. This is enough to conclude that the solution is global. To show the uniform-in-time bound, we again use the smooth cut-off function $\vat$ and repeat the arguments for global existence on each cylinder $\Omega\times(\tau,\tau+2)$, to obtain $L^\infty$-bound which are independent of $\tau\in \mathbb N$. 

\medskip
One notices in Theorem \ref{thm:main1} that by imposing the quasi-uniform condition on diffusion coefficients we are able to improve only the order of the nonlinearities in intermediate sums for surface concentrations. The reason is that with the $L^1$-estimates implied from \eqref{F2}, the upper bound $1+\frac 2n$ of $\pO$ seems to be the critical exponent to obtain $L^p$-energy estimates for all $p\geq 1$. For a specific system, it might be possible to obtain better a-priori estimates (see Section \ref{Exp3}), which consequently allows a larger range of $\pO$, $\pM$ and $\mM$. More precisely, we have the following conditional result.
\begin{theorem}[\eqref{O}-\eqref{D}-\eqref{F0}-\eqref{F1}-\eqref{F3}-\eqref{eq:intsum1}-\eqref{eq:intsum2}]\label{main:thm2}
	Assume there exist $a, b\ge 1$ such that, for any $T>0$, and for all $i=1,\ldots, m_1$, $j=1,\ldots, m_2$
	\begin{equation}\label{gg8}
	\|u_i\|_{L^\infty(0,T;\LO{a})} + \|u_i\|_{L^b(M\times(0,T))} + \|v_j\|_{L^{b}(M\times(0,T))} \leq \F(T)
	\end{equation}
	where $\F\in C([0,\infty))$. Assume that
	\begin{equation}\label{pOpM_ab}
		1\leq \pO < 1 + a\cdot\min\left\{\frac{2}{n}; \frac{3}{n+2}\right\}, \quad \text{and} \quad 1\leq \pM < 1 + \frac{a}{n}
	\end{equation}
	and
	\begin{equation}\label{mM_ab}
		1\leq \mM < 1 + \frac{2b}{n+1}.
	\end{equation}
		Then for any nonnegative initial data $(u_0,v_0)\in (W^{2-2/p}(\Omega))^{m_1}\times (W^{2-2/p}(M))^{m_2}$ for some $p>n$ satisfying the compatibility condition
	\begin{equation*}
	d_i\pa_{\eta}u_{i,0} = G_i(u_0,v_0) \quad \text{on M} \quad \text{ for all } i=1,\ldots, m_1,
	\end{equation*}
	the system \eqref{eq:mainsys} has a unique global classical solution. Moreover, if $\sup_{t\geq 0}\F(t) <+\infty$, then 
	\begin{equation*}
	\sup_{i=1,\ldots, m_1}\sup_{j=1,\ldots, m_2}\sup_{t\geq 0}\bra{\|u_i(t)\|_{L^\infty(\Omega)} + \|v_j(t)\|_{L^\infty(M)}} <+\infty.	
	\end{equation*}
\end{theorem}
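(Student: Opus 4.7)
My plan follows the five-step strategy sketched for Theorem \ref{main:thm0}, with the mass-control estimates of \textbf{Step 1} replaced by the stronger a-priori input \eqref{gg8}. Local existence and the blow-up criterion \eqref{blowup-criterion} are already provided by \eqref{F0}, so it suffices to establish, on every fixed interval $[0,T]$ with $T<T_{\max}$, an $L^\infty(\Omega\times(0,T))\times L^\infty(M\times(0,T))$-bound on the local solution. Using only \eqref{eq:intsum1}--\eqref{eq:intsum2} (note that \eqref{F2} is \emph{not} assumed here), I would first construct, for each integer $p\geq 2$, the same $L^p$-energy function
\[
\L_p[u](t)=\intO\sum_{|\beta|=p}\binom{p}{\beta}\theta^{\beta^2}u^\beta(x,t)\,dx
\]
from \textbf{Step 2} of the sketch, with weights $\theta\in(0,\infty)^{m_1}$ chosen so that the reaction terms in $\Omega$ and on $M$ produce useful negative contributions. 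Differentiating along \eqref{eq:mainsys} yields, for arbitrary $\varepsilon>0$,
\begin{equation*}
\frac{d}{dt}\L_p[u]+C\sumi\bra{\intO u_i^{p-1+\pO}+\intM u_i^{p-1+\pM}}\leq C_\varepsilon\bra{\L_p[u]+1}+\varepsilon\sumj\intM v_j^{p-1+\pM}.
\end{equation*}

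The main work is to iterate this inequality, starting from $p=a$, where the $L^\infty(L^a)$ piece of \eqref{gg8} bounds $\L_a[u](t)$. Given a current $L^p$-bound on $u_i$, a Gagliardo--Nirenberg interpolation between $L^\infty(0,T;L^p(\Omega))$ and the dissipation $\int_0^T\intO u_i^{p-1+\pO}\,dt$ delivers a strict gain of space-time integrability whenever $\pO<1+a\min\{2/n,\,3/(n+2)\}$: the $2/n$ corresponds to the standard volume Gagliardo--Nirenberg threshold, while the $3/(n+2)$ arises from combining volume interpolation with the boundary trace inequality to also upgrade $\|u_i\|_{L^p(M\times(0,T))}$ directly from the volume dissipation. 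The analogous interpolation on $M$, pairing the input $L^b(M\times(0,T))$ in \eqref{gg8} with the dissipation $\int_0^T\intM u_i^{p-1+\pM}\,d\sigma dt$, requires $\pM<1+a/n$ and enhances the boundary integrability of $u_i$ as well. Iterating these two interpolations in tandem, modulo the $\varepsilon$-error term involving $v_j$, yields $u_i\in L^p(\Omega\times(0,T))\cap L^p(M\times(0,T))$ for every $p<\infty$.

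To absorb the remaining $v_j$-contribution, I would invoke the volume-surface duality argument of \textbf{Step 4} of the sketch of Theorem \ref{main:thm0}. Testing the equations for $v_j$ against the adjoint problem
\[
\pa_t\phi+\Delta\phi=0\text{ in }\Omega,\quad \pa_t\phi+\delta_j\Delta_M\phi=-\psi\text{ on }M,\quad \phi(T)=0,
\]
and exploiting the lower-triangular structure of $A$ in \eqref{eq:intsum2}, one obtains, for each $k=1,\ldots,m_2$ and each $p>1$,
\begin{equation*}
\|v_k\|_{\LS{p}}\leq C_{T,\varepsilon}+C_T\sum_{j=1}^{k-1}\|v_j\|_{\LS{p}}+\varepsilon\sumj\|v_j\|_{\LS{p}},
\end{equation*}
as soon as the sources $H_k$, which by \eqref{eq:intsum2} are controlled by $u^{\mM}$, $v^{\mM}$ and lower-index components, are known in $L^p(M\times(0,T))$. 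Here the hypothesis $\mM<1+2b/(n+1)$ is precisely the subcritical exponent produced by combining the $L^b(M)$-information on $v_j$ with the maximal-regularity constant of the surface heat equation. A finite induction on $k$ then yields $v_j\in L^p(M\times(0,T))$ for every $p<\infty$, which fed back into the $L^p$-energy inequality with $\varepsilon$ small also closes the estimates for $u_i$.

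The remainder of the argument follows Theorem \ref{main:thm0} verbatim: once $u_i$ and $v_j$ lie in every $L^q$ on their respective domains, \eqref{F3} places $F_i, G_i, H_j$ in every $L^q$, and maximal regularity for the heat operator with inhomogeneous boundary data together with Sobolev embedding produces the desired $L^\infty$-bound, contradicting \eqref{blowup-criterion}. For the uniform-in-time conclusion under $\sup_{t\geq 0}\F(t)<\infty$, I would, for each $\tau\in\mathbb N$, introduce a smooth cut-off $\vat$ with $\vat\equiv 0$ on $(-\infty,\tau]$ and $\vat\equiv 1$ on $[\tau+1,\infty)$, repeat the scheme on the cylinders $\Omega\times(\tau,\tau+2)$ and $M\times(\tau,\tau+2)$, and check that every constant depends only on $\sup_t\F(t)$ rather than on $\tau$. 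I expect the technically most delicate step to be matching the Gagliardo--Nirenberg-type exponents on $\Omega$ and on $M$ to the precise thresholds in \eqref{pOpM_ab} and \eqref{mM_ab}; the coexistence of $2/n$ and $3/(n+2)$ in the bound on $\pO$ is exactly the fingerprint of needing both a pure volume interpolation and one that simultaneously controls the boundary trace through the $L^{p-1+\pM}(M)$ dissipation.
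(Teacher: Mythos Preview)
Your global plan---the $L^p$-energy of \textbf{Step 2}, the volume--surface duality of \textbf{Step 4}, then maximal regularity, then the cut-off on $(\tau,\tau+2)$---is exactly the paper's. The gap is in your accounting of the two thresholds for $\pO$ in \eqref{pOpM_ab}: they do \emph{not} both come from the energy/interpolation step, and no iteration on $p$ is needed.

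In the paper, the condition $\pO<1+2a/n$ (together with $\pM<1+a/n$, both using only the $L^\infty(0,T;\LO{a})$ input from \eqref{gg8}, not the $L^b(M_T)$ piece) is precisely what the interpolation Lemma~\ref{prepare} needs to absorb $\intO u_i^{p-1+\pO}+\intM u_i^{p-1+\pM}$ into the gradient dissipation; this yields the analogue of \eqref{ff4} directly for \emph{every} $p$, with no bootstrap. The second threshold $\pO<1+3a/(n+2)$ enters only in the \emph{duality} step, when one estimates the new term $(\mathsf A)=\intQT\phi\,u_i^{\pO}$ coming from $\sumi a_{(k+m_1)i}F_i(u)$: writing $u_i^{\pO}=u_i^{\pO-\alpha}\cdot u_i^{\alpha}$ and using H\"older together with the parabolic embedding $\|\phi\|_{\LQ{(p')^\dag}}\le C_T$ of Lemma~\ref{dual_boundary}, the middle factor sits in $L^s(Q_T)$ with $s\to(n+2)/3$ as $p\to\infty$, so demanding $(\pO-\alpha)s\le a$ (in order to invoke $\|u_i\|_{L^\infty(0,T;\LO{a})}\le\F(T)$) forces $\pO<1+3a/(n+2)$. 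Similarly, $\mM<1+2b/(n+1)$ is used only in the duality step, on $\intMT\phi\,u_i^{\mM}$ and $\intMT\phi\,v_j^{\mM}$, with $b$ playing the role of $\Lam+\vk$ in the proof of Lemma~\ref{lem:dual1}. Your sentence attributing $3/(n+2)$ to ``combining volume interpolation with the boundary trace inequality'' (and pairing $\pM<1+a/n$ with the $L^b(M_T)$ input) is where the plan would stall if carried out as written.
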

\begin{remark}
	It is remarked that Theorem \ref{main:thm2} does not impose the mass control assumption \eqref{F2}, since the condition is only used to obtain a-priori estimates of type \eqref{gg8}, which are now given.
\end{remark}

\medskip
{\bf Organization of the paper.} In the next section, we present the construction of $L^p$-energy functions, and show its relation to the intermediate sum condition \eqref{eq:intsum1}. Section \ref{sec:3} is devoted to an improved duality method for volume-surface systems, where we show that assumption \eqref{quasi-uniform}, in combination with the previously constructed $L^p$-energy functions, gives $L^{\Lam+\eps}$-estimates of the solutions. In Section \ref{sec:4}, we start with the proof of Theorem \ref{thm:main1} in subsections \ref{sec:4.1} and \ref{sec:uniform-in-time}, where the first subsection shows the global existence while the second one proves the uniform-in-time bound of the solutions. As pointed out in Remark \ref{remark00}, we prove Theorem \ref{main:thm0} in subsection \ref{sec:4.3} by showing that \eqref{quasi-uniform} is always satisfied for $\Lam=2$. The last subsection \ref{sec:4.4} presents the proof of Theorem \ref{main:thm2}. The last section is devoted to applications of our results to some recent models arising from cell biology. It is noted that previous works are unlikely to be applicable to these systems. Finally, we give in the Appendix \ref{appendix} two technical lemmas concerning the
construction of $L^p$-energy functions.

\medskip
{\bf Notation}. For the rest of this paper, we will use the following notation:
\begin{itemize}
	\item As some of our intermediate lemmas are of independent interest, we use the convention 
	
	\noindent``{\bf Theorem X.} ((A)-(B)-(C))" 
	
	\noindent to indicate that this theorem assumes only conditions (A), (B), (C) (besides the assumptions stated explicitly therein). It's also useful to verify which condition is applicable to which lemmas or theorems.
	\item For $0\leq \tau < T$,
	\begin{equation*}
		Q_{\tau,T}:= \Omega\times (\tau, T) \quad \text{and}\quad M_{\tau,T}:= M\times(\tau,T).
	\end{equation*}
	When $\tau = 0$, we simply write $Q_T$ and $M_T$.
	\item For $1\leq p < \infty$,
	\begin{equation*}
		\|f\|_{L^p(Q_{\tau,T})}:= \bra{\int_\tau^T\intO |f(x,t)|^p}^{\frac 1p}
	\end{equation*}
	and for $p = \infty$,
	\begin{equation*}
		\|f\|_{L^\infty(Q_{\tau,T})}:= \mathrm{ess\,sup}_{Q_T}|f(x,t)|.
	\end{equation*}
	The spaces $L^p(M_{\tau,T})$ with $1\leq p\leq \infty$ are defined in the similar way.
	\item For $1\leq p \leq \infty$,
	\begin{equation*}
		W^{2,1}_p(Q_{\tau,T}):= \left\{f\in L^p(Q_{\tau,T}): \pa_t^r\pa_x^sf\in L^p(Q_{\tau,T}) \; \forall r,s\in \mathbb N, \, 2r+s\leq 2\right\}
	\end{equation*}
	with the norm
	\begin{equation*}
		\|f\|_{W^{2,1}_p(Q_{\tau,T})}:= \sum_{2r+s\leq 2}\|\pa_t^r\pa_x^s f\|_{L^p(Q_{\tau,T})}.
	\end{equation*}
\end{itemize}

\section{Construction of $L^p$-energy functions}
	In this section, we firstly state local existence of \eqref{eq:mainsys}, and provide the blow-up criterion, which were proved in \cite{sharma2016global}. The main part concerns the construction of an $L^p$-like energy function, and its relation to the intermediate sum condition \eqref{eq:intsum1}. Estimates derived from this energy function are crucial in the sequel analysis of this paper.
	
	\begin{theorem}[\eqref{O}-\eqref{D}-\eqref{F0}]\label{thm:local}\cite[Theorem 3.2]{sharma2016global}
		For any smooth initial data $(u_0,v_0)\in (W^{2-2/p}(\Omega))^{m_1}\times (W^{2-2/p}(M))^{m_2}$ for some $p>n$ satisfying the compatibility condition \eqref{compatibility}, there exists a unique classical solution to \eqref{eq:mainsys} on a maximal interval $(0,T_{\max})$, i.e. for any $0<T<T_{\max}$,
		\begin{equation*}
			(u,v)\in C([0,T];L^p(\Omega)^{m_1}\times L^p(M)^{m_2}) \cap L^\infty(0,T;L^\infty(\Omega)^{m_1}\times \LM{\infty}^{m_2}),
		\end{equation*}
		for any $p>n$,
		\begin{equation*}
			u\in (C^{2,1}(\overline{\Omega}\times(\tau,T)))^{m_1}, \quad v \in (C^{2,1}(M\times(\tau,T)))^{m_2} \quad \text{ for all } \quad 0<\tau<T,
		\end{equation*}
		and the equations \eqref{eq:mainsys} satisfy pointwise.
		
		\medskip
		The following blow-up criterion holds
		\begin{equation}\label{blowup}
			T_{\max}<+\infty \quad \Longrightarrow \quad \limsup_{t\nearrow T_{\max}}\bra{\sumi\|u_i(t)\|_{\LO{\infty}} + \sumj\|v_j(t)\|_{\LM{\infty}}} = +\infty.
		\end{equation}
		Moreover, if \eqref{F1} holds, then $(u(t),v(t))$ is (component-wise) non-negative provided the initial data $(u_0,v_0)$ is non-negative.
	\end{theorem}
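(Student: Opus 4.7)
The plan is to prove local well-posedness by a contraction mapping argument in the parabolic maximal regularity space $W^{2,1}_p(Q_T)^{m_1}\times W^{2,1}_p(M_T)^{m_2}$ for small $T>0$, then extend to a maximal interval by iteration, and finally extract the blow-up criterion and non-negativity. The choice $p>n$ is crucial because the embedding $W^{2,1}_p(Q_T)\hookrightarrow C(\overline{Q_T})$ (and likewise on $M_T$) allows us to apply the local Lipschitz assumption \eqref{F0} pointwise to trajectories in the fixed-point ball.

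First, I would establish maximal $L^p$-regularity for the linear decoupled problem: given data $f_i\in L^p(Q_T)$, $h_j\in L^p(M_T)$, and boundary data $g_i$ in the correct anisotropic trace space on $M_T$ compatible with $u_{0,i}$, the Neumann problem
\begin{equation*}
\pa_t u_i - d_i\Delta u_i = f_i \text{ in } Q_T, \quad d_i\pa_\eta u_i = g_i \text{ on } M_T, \quad u_i(\cdot,0)=u_{0,i},
\end{equation*}
admits a unique solution in $W^{2,1}_p(Q_T)$ with the standard Ladyzenskaja-Solonnikov-Ural'ceva estimate, and analogously for the surface heat equation with $-\delta_j\Delta_M$, which generates an analytic semigroup on $L^p(M)$. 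The compatibility condition \eqref{compatibility} is precisely what is needed so that $g_i = G_i(u_0,v_0)$ lies in the correct trace space and the $W^{2,1}_p$-regularity reaches down to $t=0$.

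Next, I would define a nonlinear map $\Phi\colon(u,v)\mapsto(U,V)$ by solving the linear system above with $f_i=F_i(u)$, $g_i=G_i(u|_M,v)$, $h_j=H_j(u|_M,v)$. Using the continuous embedding and local Lipschitz continuity of $F$, $G$, $H$, one checks that for sufficiently small $T$, $\Phi$ leaves invariant a closed ball in $W^{2,1}_p(Q_T)^{m_1}\times W^{2,1}_p(M_T)^{m_2}$ centered at the linear extension of the initial data and is a strict contraction there. Banach's theorem then yields a unique strong solution on $[0,T]$; iterating this construction starting from $(u(T),v(T))$ produces a maximal existence time $T_{\max}\in(0,\infty]$. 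For the blow-up alternative \eqref{blowup}, I would argue by contradiction: if the sup-norms stayed bounded as $t\nearrow T_{\max}$, local Lipschitz continuity would keep $F_i, G_i, H_j$ bounded along the solution, maximal regularity would yield uniform $W^{2,1}_p$ bounds on intervals $(t_0,t_0+\tau)$ with $\tau$ independent of $t_0$ near $T_{\max}$, and the local existence step could be restarted past $T_{\max}$, a contradiction. Non-negativity under \eqref{F1} is then obtained by testing each equation against the negative parts $u_i^-$ and $v_j^-$; the quasi-positivity condition ensures that the reaction contribution is controlled by the negative parts themselves, and Gronwall's inequality closes the argument.

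The main technical obstacle is the volume-surface coupling through the inhomogeneous Neumann datum $d_i\pa_\eta u_i = G_i(u|_M,v)$: the boundary datum depends simultaneously on the trace of a bulk $W^{2,1}_p$ function and on a surface $W^{2,1}_p$ function, so one must carefully match trace regularities across $M_T$ and verify that the image of $\Phi$ lives in a space where these traces and surface unknowns are compatible, particularly at $t=0$. This is exactly where the compatibility condition \eqref{compatibility} is indispensable, and where the argument departs from standard reaction-diffusion theory in domains; this is the part worked out in detail in \cite{sharma2016global}.
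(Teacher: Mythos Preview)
The paper does not actually prove this theorem: it is stated with a citation to \cite[Theorem 3.2]{sharma2016global} and is used thereafter as a black box. So there is no ``paper's own proof'' to compare against here.

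Your outline is the standard and correct strategy for such a local existence result, and it matches in spirit what is carried out in \cite{sharma2016global}: maximal $L^p$-regularity for the linear decoupled problems, a contraction mapping in $W^{2,1}_p$ spaces for small $T$ exploiting the embedding into $C(\overline{Q_T})$ when $p>n$, iteration to a maximal interval, the blow-up alternative via a uniform-restart argument, and non-negativity from quasi-positivity \eqref{F1} by testing against negative parts. You have also correctly identified the genuine technical point, namely the matching of trace regularities for the inhomogeneous Neumann data $G_i(u|_M,v)$ and the role of the compatibility condition \eqref{compatibility}. Nothing in your sketch is wrong; it is simply a high-level summary of the argument whose details are deferred to the cited reference.
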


	Thanks to Theorem \ref{thm:local}, the global existence of \eqref{eq:mainsys} follows if we can show that 
	\begin{equation}\label{eee}
		\sumi\|u_i\|_{\LQ{\infty}} + \sumj\|v_j\|_{\LS{\infty}} \leq C_T
	\end{equation}
	where $C_T$ depends continuously on $T>0$, and $C_T$ is finite for all $T>0$. For simplicity, we consider for the rest of this paper $0<T<T_{\max}$, and ultimately prove the estimate \eqref{eee}.
	
	We first show that, under the mass control condition \eqref{F2}, the solution is bounded in $L^\infty(0,T;\LO{1})$ and $L^\infty(0,T;\LM{1})$.
	\begin{lemma}[\eqref{O}-\eqref{D}-\eqref{F0}-\eqref{F1}-\eqref{F2}]\label{lemma:L1}
		There exists a constant $C_T$ depending on $T$ such that 
		\begin{equation}\label{L1_bound}
			\sumi\|u_i\|_{L^\infty(0,T;\LO{1})} + \sumj\|v_j\|_{L^\infty(0,T;\LM{1})} \leq C_T.
		\end{equation}
		Moreover, there exists a constant $C_T$ depending on $T$ such that
		\begin{equation}\label{L1_MT_bound}
			\sumi\|u_i\|_{\LS{1}} \leq C_T.
		\end{equation}
	\end{lemma}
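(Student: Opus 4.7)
The plan is to derive a single Gronwall-type differential inequality for a suitably weighted mass functional
$$\Phi(t) := \intO \psi \bra{\sumi a_i u_i(t)} + \sumj b_j \intM v_j(t),$$
where $\psi \in C^2(\overline\Omega)$ is a test function tailored to the problem. First, I construct $\psi$ satisfying (i) $\psi \geq \psi_0 > 0$ on $\overline\Omega$, (ii) $\psi \equiv 1$ on $M$, and (iii) $\pa_\eta \psi \equiv \alpha$ on $M$, where $\alpha > 0$ is a parameter to be chosen large. Using the smoothness of $M$, such $\psi$ is obtained as $\psi(x) = 1 - \alpha \chi(x)\, d(x,M)$, where $d(\cdot, M)$ is the distance to $M$ (smooth on a one-sided tubular neighborhood inside $\Omega$) and $\chi$ is a smooth cutoff supported there; taking the tubular neighborhood thin enough keeps $\psi$ strictly positive no matter how large $\alpha$ is.

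Next, I test the $u_i$-equation against $a_i\psi$ and integrate by parts twice in the Laplacian term. Using $d_i \pa_\eta u_i = G_i$ together with $\psi\equiv 1$ and $\pa_\eta\psi\equiv\alpha$ on $M$ yields
$$\frac{d}{dt}\intO a_i \psi u_i + \alpha\, a_i d_i \intM u_i = a_i \intM G_i + a_i d_i \intO u_i \Delta \psi + a_i \intO \psi F_i.$$
Summing over $i$, adding the $v_j$-equations integrated over $M$ with weights $b_j$ (the Laplace-Beltrami contributions vanish since $M$ is closed), and then invoking the mass control \eqref{F2} on both the volume and surface reaction sums produces
$$\frac{d}{dt}\Phi(t) + \alpha \sumi a_i d_i \intM u_i \leq L \intM \sumi u_i + L \intM \sumj v_j + C_1 \intO \sumi u_i + C_2.$$

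The main obstacle is the trace term $L \intM \sumi u_i$ on the right: it is not controlled by $\LO{1}$ data a priori, and is precisely the reason a nontrivial $\psi$ is needed rather than $\psi\equiv 1$. I handle it by choosing $\alpha$ so large that $\alpha a_i d_i > L$ for every $i$; the extra boundary contribution produced on the left by the integration by parts then absorbs the trace term and leaves a strictly positive residual $c \intM \sumi u_i$. Since $\intM \sumj v_j$ and $\intO \sumi u_i$ are each dominated by multiples of $\Phi$ (using $\psi \geq \psi_0$ and $\min_i a_i, \min_j b_j > 0$), the inequality reduces to $\Phi'(t) + c \intM \sumi u_i \leq C_3 \Phi(t) + C_2$. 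Gronwall's inequality delivers $\sup_{t\in[0,T]}\Phi(t) \leq C_T$, which is \eqref{L1_bound}, and integrating the residual term over $[0,T]$ produces the surface estimate \eqref{L1_MT_bound}.
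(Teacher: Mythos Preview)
Your argument is correct. The construction of $\psi$ works as stated (note that $\|\Delta\psi\|_\infty$ grows with $\alpha$ since the tubular width must shrink like $1/\alpha$, but $\alpha$ is fixed once and for all, so this only affects the size of the Gronwall constant). The absorption step $\alpha a_i d_i > L$ and the subsequent Gronwall are clean, and integrating the leftover boundary term on $[0,T]$ gives \eqref{L1_MT_bound} directly.

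Your route differs from the paper's. The paper proceeds in two stages: it first derives the naive mass inequality (testing with $1$), which leaves the dangerous trace $\int_0^t\!\intM u_i$ on the right; it then controls this trace \emph{separately} by a time-dependent duality argument, solving the backward heat equation $\phi_t+\Delta\phi=0$ with Robin condition $\partial_\eta\phi = 1 + K\phi$, and choosing $K$ large to absorb $L$. You instead build a single \emph{stationary} weight $\psi$ with prescribed outward flux $\alpha$ via the distance function, so that the trace is generated on the left-hand side of the energy identity from the start and absorbed in one shot. Your approach is more elementary (no auxiliary parabolic problem, explicit construction) and yields both \eqref{L1_bound} and \eqref{L1_MT_bound} from a single differential inequality; the paper's approach, while slightly heavier here, is closer in spirit to the parabolic duality machinery it develops and reuses later in the article.
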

	\begin{proof}
		Thanks to \eqref{eq:balsysmain}, we have
		\begin{equation}\label{ee1}
		\pa_t\bra{\sumi \intO a_iu_i + \sumj \intM b_jv_j} \leq L\intO\bra{\sumi u_i + 1} + L\intM\bra{\sumi u_i + \sumj v_j + 1}.
		\end{equation}
		Thus, for some constant $C>0$,
		\begin{equation}\label{ee2}
			\sumi \intO a_iu_i(t) + \sumj \intM b_jv_j(t) \leq C\int_0^t\bra{\sumi \intO a_iu_i + \sumj\intM b_jv_j} + C\sumi\int_0^t\intM u_i + C.
		\end{equation}
		We need to deal with the boundary integral of $u_i$ on the right hand side. Let $K>0$ be a constant to be determined later and $\phi_0$ is a non-negative, smooth function in $\overline{\Omega}$ satisfying $\pa_{\eta}\phi_0 = 1 + K\phi_0$ on $M$. Suppose  $\phi\in C^{2,1}(\bar{\Omega}\times[0,t])$ be a nonnegative function such that $\phi_t+\Delta\phi=0$ on $\Omega\times(0,t)$, $\frac{\partial \phi}{\partial\eta}=1 + K\phi$ on $M\times(0,t)$, and $\phi(\cdot,t) = \phi_0$ on $\overline{\Omega}$. It follows from the comparison principle that $0\leq \phi$. Moreover, if we set $\theta = -\pa_t\phi - \Delta_M\phi$ on $M\times(0,t)$.
		By integration by parts we have for $i=1,...,m_1$
		\begin{equation}\label{eq:L1esta}
		\begin{aligned}
		\int_0^t\intM a_id_iu_i(1+K\phi)&=\int_0^t\intM a_id_i u_i \frac{\partial \phi}{\partial\eta}\\
		&\le \int_0^t\intM \phi \cdot a_i G_i(u,v)+\int_0^t\intO \phi\cdot a_i F_i(u) \\
		&\quad +\int_0^t\intO a_iu_i\left(d_i-1\right)\Delta \phi+a_i\intO\phi(\cdot,0)u_i(\cdot,0).
		\end{aligned}
		\end{equation}
		Furthermore,  $j=1,...,m_2$
		\begin{equation}\label{eq:L1estb}
		\begin{aligned}
		\int_0^t\intM b_jv_j \theta &=\int_0^t\intM b_jv_j(-\phi_t-\Delta_M\phi)\\
		&\leq \int_0^t\intM \left( \phi \cdot b_j H_j(u,v) + b_jv_j(\delta_j-1)\Delta_M\phi \right)+b_j\intM v_j(\cdot,0)\phi(\cdot,0)
		\end{aligned}
		\end{equation}
		Now sum (\ref{eq:L1esta}) from $i=1,...,m_1$, with (\ref{eq:L1estb}) from $j=1,...,m_2$, and apply (\ref{eq:balsysmain}), it follows that
		\begin{equation*}
			\begin{aligned}
			&\sumi \int_0^t\intM a_id_iu_i(1+K\phi) + \sumj \int_0^t\intM b_jv_j\theta\\
			&\leq  L\int_0^t\intO \phi\bra{\sumi u_i + 1} + L\int_0^t\intM\phi\bra{\sumi u_i + \sumj v_j + 1}\\
			&\quad + \sumi \int_0^t\intO a_iu_i(d_i-1)\Delta \phi + \sumi a_i\intO \phi(\cdot,0)u_{i,0}\\
			&\quad + \sumj \int_0^t\intM b_jv_j(\delta_j-1)\Delta_M\phi + \sumj \intM \phi(\cdot,0)v_{j,0}\\
			&\leq L\sumi\int_0^t\intM \phi u_i + C\sumi\int_0^t\intO a_i u_i + C\sumj \int_0^t\intM b_j v_j + C(1+t)
			\end{aligned}
		\end{equation*}
		where the last step uses $\phi\in C^{2,1}(\overline \Omega\times[0,t])$. By choosing $K$ large enough such that $Kd_ia_i \geq L$ for all $i=1,\ldots, m_1$, and using $v_j\geq 0$ and $\theta\in \LS{\infty}$, we obtain
		\begin{equation}\label{ee3}
			\sumi \int_0^t\intM a_id_iu_i \leq C\sumi \int_0^t\intO a_iu_i + C\sumj \int_0^t\intM b_jv_j + C(1+t).
		\end{equation}
		Inserting this into \eqref{ee2} yields
		\begin{equation*}
			\sumi \intO a_iu_i(t) + \sumj \intM b_jv_j(t) \leq C\int_0^t\bra{\sumi \intO a_iu_i + \sumj\intM b_jv_j} + C(1+t).	
		\end{equation*}
		A direct application of Gronwall's inequality gives the estimates \eqref{L1_bound}. The bound \eqref{L1_MT_bound} follows directly from \eqref{ee3}.
\end{proof}

In the following lemma, we show that \eqref{eq:intsum1} implies the existence of functions $g_j, j=m_1-1,\ldots, 1$, which allow us to construct a desired $L^p$-energy function. 
\begin{lemma}[\eqref{eq:intsum1}]\label{lem6}
	\label{thm:thetastuff}There exist componentwise increasing functions $g_j:\R^{m_1-j}\to \R$ for $j=1,...,m_1-1$, such that if $\ell\in (0,\infty)^{m_1}$ with $\ell_j> g_j(\ell_{j+1},...,\ell_{m_1})$ for $j=1,...,m_1-1$ then there exists $L_\ell>0$ such that
	\begin{equation}\label{f1}
		\sumi \ell_i F_i(u)\le L_\ell \left(\sumi u_i^{\pO}+1\right)\text{ for all }u\in \mathbb{R}_+^{m_1},
	\end{equation}	
	and
	\begin{equation}\label{f2}
	\sumi \ell_i G_i(u,v)\le L_\ell \left(\sumi u_i^{\pM} + \sumj v_j^{\pM}+1\right)\text{ for all }u\in \mathbb{R}_+^{m_1}, v\in \R_+^{m_2}.
	\end{equation}
\end{lemma}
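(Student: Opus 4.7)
The plan is to construct, for each admissible $\ell = (\ell_1, \ldots, \ell_{m_1})$, non-negative coefficients $c_1, \ldots, c_{m_1}$ such that $\sum_i \ell_i F_i(u)$ equals the non-negative combination $\sum_{k=1}^{m_1} c_k \sum_{j \le k} a_{kj} F_j(u)$ of the quantities bounded in \eqref{eq:intsum1}. The same $c_k$'s will simultaneously deliver \eqref{f2} from \eqref{eq:intsum2}, since lower-triangularity of $A$ forces the $H_j$-entries out of the first $m_1$ rows of \eqref{eq:intsum2}, so those rows again involve only $G_i$'s on the left-hand side.

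Since $a_{kk} = 1$, the $k$-th row of \eqref{eq:intsum1} reads
\begin{equation*}
\sum_{j=1}^{k} a_{kj} F_j(u) \le L_2\Big(\sum_{i=1}^{m_1} u_i^{\pO} + 1\Big), \qquad k = 1, \ldots, m_1.
\end{equation*}
Requiring $\sum_{k=1}^{m_1} c_k \sum_{j \le k} a_{kj} F_j = \sum_{j=1}^{m_1} \ell_j F_j$ reduces to the triangular linear system $\ell_j = c_j + \sum_{k > j} c_k a_{kj}$, which I would solve top-down by
\begin{equation*}
c_{m_1} := \ell_{m_1}, \qquad c_j := \ell_j - \sum_{k=j+1}^{m_1} c_k\, a_{kj} \quad (j = m_1 - 1, \ldots, 1).
\end{equation*}

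The crux is to choose $g_j$ so that the hypothesis $\ell_j > g_j(\ell_{j+1}, \ldots, \ell_{m_1})$ forces $c_j \ge 0$. I propose
\begin{equation*}
g_j(\ell_{j+1}, \ldots, \ell_{m_1}) := \sum_{k = j+1}^{m_1} a_{kj}\, \ell_k,
\end{equation*}
which is linear with non-negative coefficients, hence componentwise increasing. A downward induction on $j$ then establishes $0 \le c_k \le \ell_k$ for all $k$: the base case $c_{m_1} = \ell_{m_1}$ is immediate, and in the induction step the previously established bounds $c_k \le \ell_k$ for $k > j$ yield $\sum_{k > j} c_k a_{kj} \le g_j$, so that $c_j \ge \ell_j - g_j > 0$ and trivially $c_j \le \ell_j$.

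Finally, multiplying the $k$-th row of \eqref{eq:intsum1} by $c_k \ge 0$ and summing over $k$ yields
\begin{equation*}
\sum_{i=1}^{m_1} \ell_i F_i(u) \;=\; \sum_{k=1}^{m_1} c_k \sum_{j=1}^k a_{kj} F_j(u) \;\le\; L_2 \Big(\sum_{k=1}^{m_1} c_k\Big) \Big(\sum_{i=1}^{m_1} u_i^{\pO} + 1\Big),
\end{equation*}
which is \eqref{f1} with $L_\ell := L_2 \sum_{k=1}^{m_1} c_k$. The identical argument applied to the first $m_1$ rows of \eqref{eq:intsum2} gives \eqref{f2} with the same $L_\ell$. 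The one genuine subtlety, which dictates this choice of $g_j$, is that the \emph{exact} coefficients emerging from the recursion for $c_k$ are polynomials in the $\ell$'s with potentially negative contributions (already $c_{m_1-1} = \ell_{m_1-1} - a_{m_1,m_1-1}\ell_{m_1}$ is decreasing in $\ell_{m_1}$), so using those as $g_j$ would fail monotonicity; the slack estimate $c_k \le \ell_k$ is precisely what restores componentwise monotonicity.
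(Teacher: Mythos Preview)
Your proof is correct and follows the same core strategy as the paper: write $\sum_i \ell_i F_i$ as $\sum_k c_k\cdot(\text{row }k\text{ of }\eqref{eq:intsum1})$ by solving the triangular system $\ell_j = c_j + \sum_{k>j} a_{kj} c_k$ top-down, then invoke non-negativity of the $c_k$. The only difference is the choice of $g_j$. The paper takes $g_j(\ell_{j+1},\ldots,\ell_{m_1}) := \sum_{k>j} a_{kj}\,c_k(\ell_k,\ldots,\ell_{m_1})$, i.e.\ the \emph{exact} threshold at which $c_j$ vanishes; you take the cruder $g_j := \sum_{k>j} a_{kj}\,\ell_k$, relying on the slack bound $c_k\le \ell_k$. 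Your closing remark is well taken: the paper's exact $g_j$ need not be componentwise increasing (already for $m_1=3$ the coefficient of $\ell_3$ in the paper's $g_1$ is $a_{3,1}-a_{2,1}a_{3,2}$, which can be negative), so your choice is what actually secures the monotonicity asserted in the lemma statement. That said, for the downstream use in Lemma~\ref{lemma:Loft} monotonicity is not strictly needed --- the max over exponent tuples in condition \eqref{theta2} already covers all relevant $\ell$'s --- so both constructions serve the paper equally well.
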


\begin{proof}
	Without loss of generality we assume that and $a_{i,j}>0$ for $i>j$ with $i,j\in{1,...,m_1}$. We construct two sequences of functions $g_j: \R^{m_1-j}\to \R$, $j=m_1-1,m_1-2,\ldots, 1$ and $\alpha_j: \R^{m_1-j+1} \to \R$, $j=m_1,\ldots, 1$ inductively as follows:
	\begin{itemize}
		\item $g_{m_1-1}(x_{m_1}):= a_{m_1,m_1-1}x_{m_1}$. We also define the function $\alpha_{m_1}(x_{m_1}):= x_{m_1}$. Note that $\wh{\alpha}_{m_1}:= \alpha_{m_1}(\ell_{m_1}) = \ell_{m_1}>0$.
		\item For $i=m_1-2,m_1-3,\ldots, 2, 1$, we constructed the function $g_i$ using established functions $g_j$ and $\alpha_j$ for $j\ge i+1$. More precisely, we define
		\begin{equation*}
			g_{i}(x_{i+1},x_{i+2},\ldots, x_{m_1}):= \sum_{j=i+1}^{m_1}a_{j,i}\alpha_j(x_j,x_{j+1}, \ldots, x_{m_1}),
		\end{equation*}
		and
		\begin{equation*}
			\alpha_i(x_i,\ldots, x_{m_1}):= x_i - g_i(x_{i+1},x_{i+2},\ldots, x_{m_1}).
		\end{equation*}
		Due to the assumptions of $(\ell_i)_{i=1,\ldots, m_1}$,
		\begin{equation*}
			\wh{\alpha}_i:= \alpha_i(\ell_i,\ell_{i+1},\ldots, \ell_{m_1})=\ell_i - g_i(\ell_{i+1},\ldots, \ell_{m_1})>0.
		\end{equation*}
	\end{itemize}
	Therefore, we have
	\begin{align*}
		\sumi \ell_iF_i(u) &= \sumi\sbra{\wh{\alpha}_i + g_i(\ell_{i+1},\ldots, \ell_{m_1})}F_i(u)\\
		&=\sumi\sbra{\wh{\alpha}_i + \sum_{j=i+1}^{m_1}\wh{\alpha}_ja_{j,i}}F_i(u)\\
		&=\sumi \sum_{j=i}^{m_1}\wh{\alpha}_ja_{j,i}F_i(u) \quad (\text{since } a_{i,i}=1)\\
		&=\sumi \wh{\alpha}_i\sum_{j=1}^{i}a_{i,j}F_j(u)\\
		&\leq \sumi \wh{\alpha}_iL\sbra{\sumi u_i^{\pO}+1} \quad (\text{due to } (\ref{eq:intsum1}))
	\end{align*}
	which implies directly \eqref{f1}. The inequality \eqref{f2} can be verified in the same way, so we omit it here.
\end{proof}

\begin{remark}\label{remark2}
	The above lemma shows that the intermediate sum conditions involving $F$ and $G$ in \eqref{eq:intsum1} and \eqref{eq:intsum2} imply the existence of functions $g_j$, $j=1,\ldots, m_1-1$. It will be shown later on that once we have these functions $g_j$, an $L^p$-energy can be constructed. This means that the conclusion of Theorem \ref{main:thm0} still holds true if we assume the existence of $g_j$, $j=1,\ldots, m_1-1$, instead of the intermediate sum conditions (involving $F$ and $G$) \eqref{eq:intsum1} and \eqref{eq:intsum2}. We choose to present Theorem \ref{main:thm0} under \eqref{eq:intsum1} and \eqref{eq:intsum2} as they are more constructive, and they appear naturally in many applications (see Section \ref{sec:application}).
	
	It is important to remark that the existence of functions $g_j$ is more general than the intermediate sum conditions \eqref{eq:intsum1}--\eqref{eq:intsum2}. For example, the nonlinearities
\begin{align*}
F_1(u_1,u_2)=u_1u_2^3-u_1^4\\
F_2(u_1,u_2)=u_1^4-u_1u_2^4,
\end{align*}
clearly do not satisfy \eqref{eq:intsum1} for high dimensions $n\geq 3$. However, if we define $g_1(x) = x$, then for any $\ell = (\ell_1,\ell_2)$ with $\ell_1 > g_1(\ell_2) = \ell_2$ we have
\begin{align*}
	\ell_1F_1(u_1,u_2) + \ell_2F_2(u_1,u_2) &= u_1\sbra{\ell_1 u_2^3 - \ell_2 u_2^4 + (\ell_2-\ell_1)u_1^3}\\
	&\leq u_1\sbra{\ell_1u_2^3 - \ell_2 u_2^4} \leq L_\ell (1+u_1)
\end{align*}
for all $u_1,u_2\ge 0$, and a constant $L_\ell$ depending only on $\ell$.
\end{remark}

The following interpolation inequality might be of independent interest.
\newcommand{\rO}{r_{\Omega}}
\newcommand{\rM}{r_{M}}
\begin{lemma}[\eqref{O}]\label{prepare}
	Assume that $w:\Omega \to [0,\infty)$ with $\|w\|_{\LO{a}} \leq K$. Then for any $p\geq 2a$, $\eps>0$, and any $\rO, \rM$ satisfying
	\begin{equation*}
	1\leq \rO < 1+ \frac{2a}{n} \quad \text{ and } 1\leq \rM < 1 + \frac{a}{n},
	\end{equation*}
	there exists a constant $C_{p,\eps,K}$ depending on $p, \eps$ and $K$, but not on $w$, such that
	\begin{equation}\label{c1_1}
		\intO w^{p-1+\rO} + \intM w^{p-1+\rM} \leq \eps\bra{\intO w^{p-2}|\na w|^2 + \intO w^{p}} + C_{p,\eps,K}.
	\end{equation}
\end{lemma}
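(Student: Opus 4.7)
The idea is to rewrite everything in terms of $\phi := w^{p/2}$, so that
\begin{equation*}
\int_\Omega w^{p-2}|\nabla w|^2 = \frac{4}{p^2}\|\nabla\phi\|_{L^2(\Omega)}^2, \qquad \int_\Omega w^p = \|\phi\|_{L^2(\Omega)}^2,
\end{equation*}
and the $L^a$-bound on $w$ gives $\|\phi\|_{L^{2a/p}(\Omega)}^{2a/p} = \|w\|_{L^a(\Omega)}^a \leq K^a$. The right-hand side of \eqref{c1_1} is then (up to $p$-dependent constants) of the form $\varepsilon\|\phi\|_{W^{1,2}(\Omega)}^2 + C_{p,\varepsilon,K}$, so the task reduces to bounding $\|\phi\|_{L^{q_\Omega}(\Omega)}^{q_\Omega}$ and $\|\phi\|_{L^{q_M}(M)}^{q_M}$, where $q_\Omega = 2(p-1+\rO)/p$ and $q_M = 2(p-1+\rM)/p$, by a product of $\|\phi\|_{W^{1,2}(\Omega)}$ and $\|\phi\|_{L^{2a/p}(\Omega)}$.

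For the volume integral, I would apply the Gagliardo--Nirenberg inequality
\begin{equation*}
\|\phi\|_{L^{q_\Omega}(\Omega)} \leq C\,\|\phi\|_{W^{1,2}(\Omega)}^{\theta_\Omega}\,\|\phi\|_{L^{2a/p}(\Omega)}^{1-\theta_\Omega},
\end{equation*}
with $\theta_\Omega\in[0,1]$ determined by scaling invariance. A short calculation shows the exponent on $\|\phi\|_{W^{1,2}(\Omega)}$ when one raises to the power $q_\Omega$ is exactly
\begin{equation*}
\theta_\Omega\, q_\Omega \;=\; \frac{2n\bra{p-1+\rO-a}}{np-(n-2)a},
\end{equation*}
which is strictly less than $2$ precisely when $p-1+\rO < p + 2a/n$, i.e.\ when $\rO < 1 + 2a/n$. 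After inserting the a priori bound $\|\phi\|_{L^{2a/p}(\Omega)} \leq K^{p/2}$ and applying Young's inequality with conjugate exponents $2/(\theta_\Omega q_\Omega)$ and $2/(2-\theta_\Omega q_\Omega)$, one absorbs the $W^{1,2}$-term:
\begin{equation*}
\intO w^{p-1+\rO} \leq \frac{\eps}{2}\bra{\intO w^{p-2}|\na w|^2 + \intO w^p} + C_{p,\eps,K}.
\end{equation*}

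For the surface integral I would follow the same strategy, based on the trace Gagliardo--Nirenberg inequality
\begin{equation*}
\|\phi\|_{L^{q_M}(M)} \leq C\,\|\phi\|_{W^{1,2}(\Omega)}^{\theta_M}\,\|\phi\|_{L^{2a/p}(\Omega)}^{1-\theta_M},
\end{equation*}
which I would invoke as a classical fact (obtainable either by composing the trace theorem $W^{1,2}(\Omega)\to W^{1/2,2}(M)$ with Sobolev embedding on $M$, or by a direct localization and scaling argument in the half-space, using that $\Omega$ lies locally on one side of $M$ by \eqref{O}). Scaling now yields
\begin{equation*}
\theta_M\, q_M \;=\; \frac{2\bra{n(p-1+\rM)-(n-1)a}}{np-(n-2)a},
\end{equation*}
which is strictly less than $2$ precisely when $p-1+\rM < p + a/n$, i.e.\ $\rM < 1 + a/n$. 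A second application of Young's inequality with the analogous conjugate exponents produces the boundary counterpart of the volume estimate, with prefactor $\eps/2$. Adding the two inequalities and restoring $\phi=w^{p/2}$ yields \eqref{c1_1}.

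The main technical point is the trace Gagliardo--Nirenberg inequality and the verification that the scaling-determined exponents give exactly the sharp borderline thresholds $\rO < 1 + 2a/n$ and $\rM < 1 + a/n$; everything else is algebra and one invocation of Young's inequality. The low-dimensional cases $n=1,2$ require no extra work since $W^{1,2}(\Omega)$ then embeds in every $L^q(\Omega)$ (and every $L^q(M)$ via the trace), so the restrictions on $\rO,\rM$ become strictly less constraining and the same Young argument applies.
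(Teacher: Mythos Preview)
Your approach is correct and more streamlined than the paper's, but it leans on two inequalities that are less standard than you present them.

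First, since $p\ge 2a$, the lower exponent $2a/p$ is at most $1$ (and strictly less than $1$ in the paper's main application $a=1$, $p\ge 2$). The Gagliardo--Nirenberg inequality with a lower endpoint $L^r$, $0<r<1$, is true but not the textbook statement; you should cite an appropriate reference or sketch its derivation rather than invoke it as classical. Second, the trace interpolation $\|\phi\|_{L^{q_M}(M)}\le C\|\phi\|_{W^{1,2}(\Omega)}^{\theta_M}\|\phi\|_{L^{2a/p}(\Omega)}^{1-\theta_M}$ does \emph{not} follow from composing the trace $W^{1,2}(\Omega)\to W^{1/2,2}(M)$ with Sobolev embedding on $M$: that composition only yields the endpoint $\theta_M=1$, which is useless for producing a small $\eps$ via Young. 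You need the genuine interpolation form, and your half-space localization/scaling route is the right one, but it is a real argument, not a citation.

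The paper avoids both issues. For the volume term it uses standard Gagliardo--Nirenberg between $H^1$ and $L^1$ (that is, it controls $\|w\|_{L^{p/2}}$ rather than $\|w\|_{L^a}$ directly), then interpolates $\|w\|_{L^{p/2}}$ between the known bound $\|w\|_{L^a}\le K$ and the quantity $\|w\|_{L^{p-1+r_\Omega}}$ appearing on the left, and absorbs the latter. For the boundary term it invokes the elementary trace inequality
\[
\int_M w^{p+\xi}\le C\int_\Omega w^{p+\xi-1}|\nabla w|+C\int_\Omega w^{p+\xi}
\]
from Grisvard, applies Young to split off $\int_\Omega w^{p-2}|\nabla w|^2$, and reduces the remainder $\int_\Omega w^{p+2\xi}$ to the already-established volume estimate (the key observation being that $\xi=r_M-1<a/n$ gives $2\xi<2a/n$, so $1+2\xi$ is an admissible $r_\Omega$). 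Your route is shorter once the two nonstandard inequalities are granted; the paper's route is longer but stays entirely within off-the-shelf tools.
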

\begin{proof}
	We will estimate the domain term on the left hand side of \eqref{c1_1}, while the boundary term will follow as a consequence. First, by using Sobolev's embedding we have
	\begin{equation}\label{c2_1}
	\begin{aligned}
	\intO w^{p-2}|\na w|^2 + \intO w^p = \frac{4}{p^2}\intO|\na(w^{p/2})|^2 + \intO(w^{p/2})^2\geq \frac{4}{p^2}\|w^{p/2}\|_{H^1(\Omega)}^2.
	\end{aligned}
	\end{equation}
	
	Define $\rO = 1+\eta$ and $\beta = \frac{2\eta}{p}$. Then we have $\eta\in [0,2a/n)$ and $\beta \in [0,\frac{4a}{np})$, and
	\begin{equation*}
		\intO w^{p-1+\rO} = \intO w^{p+\eta} = \intO|w^{p/2}|^{2+\beta} = \|y\|_{\LO{2+\beta}}^{2+\beta},
	\end{equation*}
	where $y:= w^{p/2}$. 
	Thanks to the Gagliardo-Nirenberg inequality, we have
	\begin{equation*}
		\|y\|_{\LO{2+\beta}}^{2+\beta} \leq C_{\rm{GN}}\|y\|_{H^1(\Omega)}^{\alpha(2+\beta)}\|y\|_{\LO{1}}^{(1-\alpha)(2+\beta)} 
	\end{equation*}
	where $\alpha \in (0,1)$ satisfies
	\begin{equation*}
		\frac{1}{2+\beta} = \bra{\frac 12 - \frac 1n}\alpha + \frac{1-\alpha}{1}.
	\end{equation*}
	It follows that
	\begin{equation*}
		\alpha(2+\beta) = \frac{2n(\beta+1)}{n+2}\quad \text{and}\quad (1-\alpha)(2+\beta) = \frac{4+2\beta - \beta n}{n+2}.
	\end{equation*}
	Therefore,
	\begin{equation*}
		\|y\|_{\LO{2+\beta}}^{2+\beta} \leq C_{\rm{GN}}\|y\|_{H^1(\Omega)}^{\frac{2n(\beta+1)}{n+2}}\|y\|_{\LO{1}}^{\frac{4+2\beta-\beta n}{n+2}}.
	\end{equation*}
	Since $\beta<4a/(np)$ and $p\geq 2a$, it holds $\frac{2n(\beta+1)}{n+2}<2$. We can then use Young's inequality to estimate
	\begin{equation}\label{c0-1}
		\|y\|_{\LO{2+\beta}}^{2+\beta} \leq \eps \|y\|_{H^1(\Omega)}^2 + C_{\eps}\|y\|_{\LO{1}}^{\frac{4+2\beta-\beta n}{2-n\beta}}.
	\end{equation}
	Changing the variable $y = w^{p/2}$ we have
	\begin{equation}\label{c0}
		\|y\|_{\LO{1}}^{\frac{4+2\beta-\beta n}{2-n\beta}} = \|w\|_{\LO{\frac p2}}^{\frac{p(4+2\beta-\beta n)}{2(2-n\beta)}}.
	\end{equation}
	If $p = 2a$, then this term is bounded by a constant depending on $K$, since $\|w\|_{\LO{a}} \leq K$. If $p>2a$, we use interpolation inequality to have
	\begin{equation}\label{c0_1}
		\|w\|_{\LO{\frac p2}} \leq \|w\|_{\LO{p+\eta}}^{\theta}\|w\|_{\LO{a}}^{1-\theta} \leq K^{1-\theta}\|w\|_{\LO{p+\eta}}^{\theta}
	\end{equation}
	where $\theta \in (0,1)$ satisfies
	\begin{equation*}
		\frac{2}{p} = \frac{\theta}{p+\eta} + \frac{1-\theta}{a}.
	\end{equation*}
	Note that 
	\begin{equation}\label{c0_2}
		\theta {\frac{p(4+2\beta-\beta n)}{2(2-n\beta)}} = \frac{(p+\eta)(p-2a)(4+2\beta-\beta n)}{2(2-n\beta)(p+\eta-a)} < p+\eta
	\end{equation}
	due to $\beta=2\eta/p$ and $\eta < 2a/n$. From \eqref{c0}--\eqref{c0_2} and Young's inequality, it follows that
	\begin{equation*}
		C_\eps\|y\|_{\LO{1}}^{\frac{4+2\beta-\beta n}{2-n\beta}} \leq C_\eps C_K\|w\|_{\LO{p+\eta}}^{\frac{(p+\eta)(p-2a)(4+2\beta-\beta n)}{2(2-n\beta)(p+\eta-a)}} \leq \frac 12 \|w\|_{\LO{p+\eta}}^{p+\eta} + C_{p,\eps,K}.
	\end{equation*}
	Inserting this into \eqref{c0-1}, and noting that $\|y\|_{\LO{2+\beta}}^{2+\beta} = \|w\|_{\LO{p+\eta}}^{p+\eta}$, we get
	\begin{equation*}
		\|w\|_{\LO{p+\eta}}^{p+\eta} \leq 2\eps\|w^{p/2}\|_{H^1(\Omega)}^2 + C_{p,\eps,K}.
	\end{equation*}
	Combining this with \eqref{c2_1} leads to the desired estimate for the domain term, i.e.
	\begin{equation}\label{c2}
		\intO w^{p-1+\rO} \leq \eps\bra{\intO w^{p-2}|\na w|^2 + \intO w^p} + C_{p,\eps,K}.
	\end{equation}
	\medskip
	To treat the boundary term $\intM w^{p-1+\pM}$, we first define $\pM = 1+\xi$ for $\xi \in [0,a/n)$. We then use the following interpolation trace inequality, see e.g. \cite[Proof of Theorem 1.5.1.10, page 41]{grisvard2011elliptic},
	\begin{equation*}
	\begin{aligned}
		\intM w^{p-1+\pM} = \intM w^{p+\xi} &\leq C\intO w^{p+\xi - 1}|\na w| + C\intO w^{p+\xi}\\
		&\leq \frac{\eps}{2}\intO w^{p-2}|\na w|^2 + C_{\eps}\intO w^{p+2\xi} + C\intO w^{p+\xi}\\
		&\leq \frac{\eps}{2}\intO w^{p-2}|\na w|^2 + C_{\eps}\intO w^{p+2\xi} + C.
	\end{aligned}
	\end{equation*}
	Since $\xi \in [0,a/n)$, $2\xi \in [0,2a/n)$. Therefore, we can use \eqref{c2} to show
	\begin{equation*}
		C_\eps\intO w^{p+2\xi} \leq \frac{\eps}{2}\bra{\intO w^{p-2}|\na w|^2 + \intO w^{p}} + C_{p,\eps,K}.
	\end{equation*}
	Therefore, we obtain the estimate for the boundary term in \eqref{c1_1}, and thus finish the proof of Lemma \ref{prepare}.
\end{proof}

To construct our $L^p$-energy function, we write $\mathbb{Z}_{+}^{k}$ for the set of all $k$-tuples of non negative integers. Addition and
scalar multiplication by non negative integers of elements in $\mathbb{Z}_{+}^{k}$
is understood in the usual manner. If $\beta=(\beta_{1},...,\beta_{k})\in \mathbb{Z}_{+}^{k}$ and $p\in \mathbb N$,
then we define $\beta^{p}=((\beta_{1})^{p},...,(\beta_{k})^{p})$.
Also, if $\alpha=(\alpha_{1},...,\alpha_{k})\in  \mathbb{Z}_{+}^{k}$, then
we define $|\alpha|=\sum_{i=1}^{k}\alpha_{i}$. Finally, if $z=(z_{1},...,z_{k})\in \mathbb{R}_{+}^{k}$
and $\alpha=(\alpha_{1},...,\alpha_{k})\in \mathbb{Z}_{+}^{k}$, then we define
$z^{\alpha}=z_{1}^{\alpha_{1}}\cdot...\cdot z_{k}^{\alpha_{k}}$,
where we interpret $0^{0}$ to be $1$. For $2\leq p\in \mathbb N$, we build our $L^p$-energy function of the form
\begin{equation}\label{Lp}
	\L_p[u](t) = \intO \H_p[u](t),
\end{equation}
where
\begin{equation}\label{Hp}
	\H_p[u](t) = \sum_{\beta\in \mathbb Z_+^{m_1}, |\beta| = p}\begin{pmatrix}
	p\\ \beta\end{pmatrix}\theta^{\beta^2}u(t)^{\beta},
\end{equation}
and the positive constants $\theta= (\theta_1,\ldots, \theta_{m_1})$ are to be chosen. For convenience, hereafter we drop the subscript $\beta\in \mathbb Z_+^{m_1}$ in the sum as it should be clear.

The main result of this section is the following lemma.
\begin{lemma}[\eqref{O}-\eqref{D}-\eqref{F0}-\eqref{F1}-\eqref{F2}-\eqref{eq:intsum1}-\eqref{eq:intsum2}-\eqref{pOpM}]\label{lemma:Loft}
	For any positive integer $p\geq 2$ and any constant $\eps>0$, there exists $K_{p,\eps}>0$ such that 
%	\begin{equation}\label{eq:newLprime}
%	\L_p'(t)+C\sumi\intO u_i^{p-1+\pO}+C\sumi\intM  u_i^{p-1+\pM} \le K_{p,\eps}+\eps\int_M \sum_{j=1}^{m_2} v_j^p .
%	\end{equation}
%	In particular, 
	\begin{equation}\label{eps_critical}
		\sumi\bra{\norm{u_i}_{\LQ{p-1+\pO}}^{p-1+\pO} + \norm{u_i}_{\LS{p-1+\pM}}^{p-1+\pM}} \leq K_{p,\eps}(1+T) + \eps\sumj\norm{v_j}_{\LS{p-1+\pM}}^{p-1+\pM}
	\end{equation}
	for a possibly different constant $K_{p,\eps}$. 
	
	Consequently, for any $1<p<\infty$ and any $\eps>0$, there exists a constant $K_{p,\eps}>0$ such that
	\begin{equation}\label{ff4}
		\sumi\bra{\|u_i\|_{\LQ{p}}+\|u_i\|_{\LS{p}}} \leq K_{p,\eps}(1+T) + \eps\sumj\|v_j\|_{\LS{p}}.
	\end{equation}
%	Consequently,
%	\begin{equation}\label{eps_critical1}
%	\sumi\bra{\norm{u_i}_{\LQ{p}} + \norm{u_i}_{\LS{p}}} \leq K_{p,\eps}(1+T) + \eps\sumj\norm{v_j}_{\LS{p}}.
%	\end{equation}
\end{lemma}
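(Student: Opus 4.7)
The plan is to differentiate the weighted $L^p$-energy $\L_p[u]$ defined in \eqref{Lp}--\eqref{Hp} along the flow \eqref{eq:mainsys}, close a differential inequality, and integrate in time to obtain \eqref{eps_critical}; the consequence \eqref{ff4} then follows from \eqref{eps_critical} by extracting roots together with Hölder's inequality on $M_T$ and $Q_T$, combined with interpolation against the $L^1$ bounds supplied by Lemma \ref{lemma:L1}. For fixed integer $p\geq 2$, the first step is to invoke Lemma \ref{lem6} to choose $\theta=(\theta_1,\ldots,\theta_{m_1})$ so that, for every multi-index $\beta$ with $|\beta|=p-1$, the tuple $\ell_i:=\theta_i^{2\beta_i+1}$ satisfies $\ell_j>g_j(\ell_{j+1},\ldots,\ell_{m_1})$. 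Since there are only finitely many such $\beta$, choosing $\theta_i$ that grow sufficiently rapidly in $i$ accommodates all constraints simultaneously.

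Next, I would differentiate $\L_p[u]$ using $\pa_t u_i=d_i\Delta u_i+F_i(u)$ and substitute the flux boundary condition $d_i\pa_\eta u_i=G_i(u,v)$, integrating by parts to obtain
\begin{equation*}
\tfrac{d}{dt}\L_p[u] = -\intO \sum_{i,k}d_i \pa^2_{u_iu_k}\H_p[u]\,\na u_i\cdot\na u_k + \intO \sum_i(\pa_{u_i}\H_p[u])F_i(u) + \intM \sum_i(\pa_{u_i}\H_p[u])G_i(u,v).
\end{equation*}
The two appendix lemmas are designed precisely so that, for $\theta$ chosen above, the Hessian quadratic form dominates $c\sum_i u_i^{p-2}|\na u_i|^2$ for some $c>0$, delivering a useful dissipation. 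Re-indexing $\pa_{u_i}\H_p[u]$ by $\beta$ with $|\beta|=p-1$ turns the volume reaction into $p\sum_{|\beta|=p-1}\binom{p-1}{\beta}\theta^{\beta^2}u^\beta\sum_i\theta_i^{2\beta_i+1}F_i(u)$; by the choice of $\theta$, Lemma \ref{lem6} bounds each inner sum by $C_\beta(\sumi u_i^{\pO}+1)$, and assembly yields $\intO\sum_i(\pa_{u_i}\H_p[u])F_i\leq C_p\intO(\sumi u_i^{p-1+\pO}+1)$. The same mechanism applied to $G_i$ produces a bound $C_p\intM(\sumi u_i^{p-1+\pM}+\sumi\sumj v_j^{\pM}u_i^{p-1}+1)$, and Young's inequality with conjugate exponents $(p-1+\pM)/\pM$ and $(p-1+\pM)/(p-1)$ splits the cross term $v_j^{\pM}u_i^{p-1}$ into $\eps v_j^{p-1+\pM}+C_\eps u_i^{p-1+\pM}$.

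Collecting all contributions gives
\begin{equation*}
\tfrac{d}{dt}\L_p[u] + c\intO \sumi u_i^{p-2}|\na u_i|^2 \leq C_p\intO\bra{\sumi u_i^{p-1+\pO}+1} + C_p\intM\sumi u_i^{p-1+\pM} + \eps\intM\sumj v_j^{p-1+\pM} + C.
\end{equation*}
Since Lemma \ref{lemma:L1} provides $\|u_i\|_{L^\infty(0,T;\LO{1})}\leq C_T$ (so one can take $a=1$ in Lemma \ref{prepare}) and \eqref{pOpM} forces $\pO<1+2/n$ and $\pM<1+1/n$, a direct application of Lemma \ref{prepare} absorbs the $u_i^{p-1+\pO}$ volume term and the $u_i^{p-1+\pM}$ boundary term into $\frac{c}{2}\intO u_i^{p-2}|\na u_i|^2+C_{p,\eps}$. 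Integrating in time and using $\L_p[u](T)\geq 0$ together with $\L_p[u](0)$ finite (by smoothness of $u_0$) yields \eqref{eps_critical} after renaming the free parameter. For the consequence \eqref{ff4}, given $p^*>1$ I would pick an integer $P\geq 2$ with $P-1+\pM\geq p^*$ and $P-1+\pO\geq p^*$, take $(P-1+\pM)$-th and $(P-1+\pO)$-th roots of \eqref{eps_critical} using the sub-additivity $(a+b)^s\leq a^s+b^s$ for $s\in(0,1]$, then use Hölder's inequality to step down to $L^{p^*}$ on the left; for $p^*$ below the range covered directly, I would interpolate against the $\LS{1}$ and $L^\infty(0,T;\LM{1})$ bounds of Lemma \ref{lemma:L1} and apply Young's inequality to convert the resulting sub-linear powers of the $v_j$-norms into $\eps\sumj\|v_j\|_{\LS{p^*}}$ plus constants.

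The main obstacle is the dissipation positivity: showing that for a suitable $\theta$, the symmetric matrix $(\pa^2_{u_iu_k}\H_p[u])_{i,k}$ weighted by the $d_i$ yields a quadratic form dominating $c\,\mathrm{diag}(u_i^{p-2})$ against gradient vectors. This is a combinatorial/spectral question about the multinomial coefficients $\binom{p}{\beta}\theta^{\beta^2}$ and is the role played by the two appendix lemmas. A secondary difficulty is that the constraints imposed on $\theta$ by Lemma \ref{lem6} applied separately for each $\beta$, together with those from dissipation positivity, must all be satisfied simultaneously; fortunately each constraint has the form ``$\theta_i$ grows fast enough relative to $\theta_{i+1},\ldots,\theta_{m_1}$'', so a single sufficiently rapidly growing sequence suffices.
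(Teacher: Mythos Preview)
Your outline matches the paper's proof closely: the same choice of weights $\theta$ (satisfying both the positivity condition on the diffusion matrix and the constraints from Lemma~\ref{lem6}), the same differentiation of $\L_p[u]$ via the appendix lemmas, the same use of Lemma~\ref{lem6} on the inner sums, Young's inequality on the cross term $u_i^{p-1}v_j^{\pM}$, and Lemma~\ref{prepare} to absorb.

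There is one presentational gap worth noting. After you ``absorb the $u_i^{p-1+\pO}$ volume term and the $u_i^{p-1+\pM}$ boundary term into $\tfrac{c}{2}\intO u_i^{p-2}|\nabla u_i|^2+C_{p,\eps}$'' and integrate, what you obtain is a bound on $\L_p[u](T)$ and on the time-integrated dissipation, \emph{not} directly on $\|u_i\|_{\LQ{p-1+\pO}}^{p-1+\pO}$ or $\|u_i\|_{\LS{p-1+\pM}}^{p-1+\pM}$, which is what \eqref{eps_critical} asserts. The paper's device is to \emph{add} $\sumi\bigl(\tfrac{c_p}{2}\intO u_i^p+\intO u_i^{p-1+\pO}+\intM u_i^{p-1+\pM}\bigr)$ to both sides of the differential inequality \emph{before} invoking Lemma~\ref{prepare}; then Lemma~\ref{prepare} (which, incidentally, also produces an $\intO u_i^p$ term you omitted) kills the right-hand copies while the left-hand copies survive, and integrating yields \eqref{eps_critical} with the claimed linear-in-$T$ constant. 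Your route could be salvaged by a second application of Lemma~\ref{prepare} in the forward direction after integrating, but you would then have to control $\int_0^T\intO u_i^p\,dt$ and would lose the clean $K_{p,\eps}(1+T)$ form. The add-to-both-sides trick is the cleaner repair.
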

\begin{proof}
First, we choose $\theta=(\theta_{1},...,\theta_{m_1})$
with $\theta_{i}\ge1$ for all $i=1,...,m_1$ such that 
\begin{enumerate}[label=($\theta$\theenumi),ref=$\theta$\theenumi]
	\item\label{theta1} The matrix $\mathscr{M}=\left(\mathscr{M}_{i,j}\right)$ is positive
	definite, where 
	\begin{equation}\label{M}
	\mathscr{M}_{i,j}=\begin{cases}
	\begin{array}{cc}
	d_{i}\theta_{i}^{2}, & \text{ if \ensuremath{i=j}}\\
	\frac{d_{i}+d_{j}}{2}, & \text{ if \ensuremath{i\ne j}}
	\end{array}\end{cases}
	\end{equation}
	
	\item\label{theta2} $\theta_j> \underset{i_1,...,i_{m_1-j}\in\{1,...,2p-1\}}{\max}g_j\left(\theta_{j+1}^{i_1},...,\theta_{m_1}^{i_{m_1-j}}\right)$ for all $j=1,...,m_1-1$, where the functions $g_j$ are given by Lemma \ref{thm:thetastuff}.
\end{enumerate}
Such a choice of $\theta$ is possible since the off-diagonal elements of $\mathscr{M}$ are fixed, therefore we choose successively $\theta_{m_1}>0$, $\theta_{m_1-1}$, $\ldots$, $\theta_1$ such that \eqref{theta2} is fulfilled, and $\theta_{j}$ is large enough such that $\mathscr{M}$ is diagonally dominant, which implies its positive definiteness. With this chosen $\theta$, we define for $2\leq p\in \mathbb Z$ our $L^p$-energy function $\L_p[u]$ as in \eqref{Lp}. Then, thanks to Lemma \ref{Hp-lem7} 
\begin{equation}\label{f5}
\begin{aligned}
\bra{\L_p[u]}'(t)&=\int_{\Omega}\sum_{|\beta|=p-1}\left(\begin{array}{c}
p\\
\beta
\end{array}\right)\theta^{\beta^{2}}u^{\beta}\sum_{i=1}^{m_1}\theta_{i}^{2\beta_{i}+1}\frac{\partial}{\partial t}u_{i}\\
&=\int_{\Omega}\sum_{|\beta|=p-1}\left(\begin{array}{c}
p\\
\beta
\end{array}\right)\theta^{\beta^{2}}u^{\beta}\sum_{i=1}^{m_1}\theta_{i}^{2\beta_{i}+1}\left(d_{i}\Delta u_{i}+F_{i}(u)\right).
\end{aligned}
\end{equation}
Integration by parts gives 
\[
\int_{\Omega}\sum_{|\beta|=p-1}\left(\begin{array}{c}
p\\
\beta
\end{array}\right)\theta^{\beta^{2}}u^{\beta}\sum_{i=1}^{m_1}d_{i}\Delta u_{i}=:(I)+(II),
\]
where
\[
(I)=\int_{M}\sum_{|\beta|=p-1}\left(\begin{array}{c}
p\\
\beta
\end{array}\right)\theta^{\beta^{2}}u^{\beta}\sum_{i=1}^{m_1}\theta_{i}^{2\beta_{i}+1}G_{i}(u,v),
\]
and, thanks to Lemma \ref{Hp-lem8},
\begin{equation}\label{f3}
(II)=-\int_{\Omega}\sum_{|\beta|=p-2}\left(\begin{array}{c}
p\\
\beta
\end{array}\right)\theta^{\beta^{2}}u^{\beta}\sum_{l=1}^{n}\sum_{i,j=1}^{m_1}a_{i,j}\frac{\partial}{\partial x_{l}}u_{i}\frac{\partial}{\partial x_{l}}u_{j}
\end{equation}
with 
\begin{equation}\label{a_ij}
a_{i,j}=\begin{cases}
\begin{array}{cc}
d_{j}\theta_{i}^{2\beta_{i}+1}\theta_{j}^{2\beta_{j}+1}, & \text{ if \ensuremath{i\ne j}},\\
d_{i}\theta_{i}^{4\beta_{i}+4}, & \text{ if \ensuremath{i=j}}.
\end{array}\end{cases}
\end{equation}
Note that 
\begin{equation}\label{f4}
\sum_{i,j=1}^{m_1}a_{i,j}\frac{\partial}{\partial x_{l}}u_{i}\frac{\partial}{\partial x_{l}}u_{j}=\sum_{i,j=1}^{m_1}b_{i,j}\frac{\partial}{\partial x_{l}}u_{i}\frac{\partial}{\partial x_{l}}u_{j}
\end{equation}
where 
\[
b_{i,j}=\begin{cases}
\begin{array}{cc}
\frac{d_{i}+d_{j}}{2}\theta_{i}^{2\beta_{i}+1}\theta_{j}^{2\beta_{j}+1}, & \text{ if \ensuremath{i\ne j}}\\
d_{i}\theta_{i}^{4\beta_{i}+4}, & \text{ if \ensuremath{i=j}}
\end{array}\end{cases}
\]
Furthermore, if we define $\mathscr B = \left(b_{i,j}\right)$, and the $m_1\times m_1$ diagonal matrix 
\[
\mathscr C=\text{diag}\left(\theta_{i}^{-2\beta_{i}-1}\right)
\]
then
\[
\mathscr C^{-1}\mathscr M\mathscr C^{-1}=\mathscr{B}
\]
where $\mathscr{M}$ is defined in \eqref{M}. Consequently, from the choice of $\theta$, the matrix $\mathscr B$
is positive definite. Therefore, there exists $\lambda>0$ such that
\begin{equation*}
	\sum_{i,j=1}^{m_1}b_{i,j}\frac{\pa}{\pa x_l}u_i\frac{\pa}{\pa x_l}u_j \geq \lambda \sum_{i=1}^{m_1}\abs{\frac{\pa}{\pa x_l}u_i}^2.
\end{equation*}
Inserting this into \eqref{f4} and \eqref{f3} leads to, for some $c_p>0$,
\begin{equation}\label{eq:gradpstuff}
\begin{aligned}
(II) &\leq -\lambda \int_{\Omega}\sum_{|\beta|=p-2}\left(\begin{array}{c}
p\\
\beta
\end{array}\right)\theta^{\beta^{2}}u^{\beta}\sum_{l=1}^{n}\sum_{i=1}^{m_1}\abs{\frac{\partial}{\partial x_{l}}u_{i}}^2\\
&\leq -c_p\sum_{i=1}^{m_1}\intO u_i^{p-2}|\na u_i|^2.
\end{aligned}
\end{equation}

Also, $\theta_i\le\theta_i^{2\beta_i+1}\le\theta_i^{2p-1}$ for all $i=1,...,m_1$. Therefore, from Lemma \ref{thm:thetastuff}, \eqref{eq:intsum1}--\eqref{eq:intsum2} and the choice of $\theta$, there is a value $L_\theta>0$ such that
\[
\sum_{i=1}^{m_1}\theta_{i}^{2\beta_{i}+1}G_{i}(u,v)\le L_\theta \left(\sum_{i=1}^{m_1}u_{i}^{\pM}+\sum_{j=1}^{m_2}v_{j}^{\pM}+1\right)
\]
and
\[
\sum_{i=1}^{m_1}\theta_{i}^{2\beta_{i}+1}F_{i}(u)\le L_\theta \left(\sum_{i=1}^{m_1}u_{i}^{\pO}+1\right).
\]
As a result, there exist $c_{p},K_{p,\theta}>0$ so that 
\begin{equation}
\sum_{|\beta|=p-1}\left(\begin{array}{c}
p\\
\beta
\end{array}\right)\theta^{\beta^{2}}u^{\beta}\sum_{i=1}^{m_1}\theta_{i}^{2\beta_{i}+1}G_{i}(u,v)\le K_{p,\theta}\sum_{j=1}^{m_1}u_{j}^{p-1}\left(\sum_{i=1}^{m_1}u_{i}^{\pM}+\sum_{j=1}^{m_2}v_{j}^{\pM}+1\right),\label{eq:Gpstuff}
\end{equation}
and
\begin{equation}
\sum_{|\beta|=p-1}\left(\begin{array}{c}
p\\
\beta
\end{array}\right)\theta^{\beta^{2}}u^{\beta}\sum_{i=1}^{m_1}\theta_{i}^{2\beta_{i}+1}F_{i}(u)\le K_{p,\theta}\left(\sum_{i=1}^{m_1}u_{i}^{p-1+\pO}+1\right)\label{eq:Fpstuff}.
\end{equation}
By applying \eqref{eq:gradpstuff}, \eqref{eq:Gpstuff} and \eqref{eq:Fpstuff} into \eqref{f5}, there exists $K_{p,\theta}>0$ such that 
\begin{equation}\label{eq:LprimeEqn}
\begin{aligned}
(\L_p[u])'(t)&+c_p\sum_{i=1}^{m_1}\int_\Omega  u_i^{p-2}|\nabla u_i|^2\\
&\le  K_{p,\theta}\left[1+\int_\Omega \sum_{j=1}^{m_1}u_j^{p-1+\pO}+ \sumi\intM u_i^{p-1+\pM}+ \sumi\sumj\int_M u_i^{p-1}v_j^{\pM}\right]\\
&\leq K_{p,\theta,\eps}\sbra{1+\sumi\intO u_i^{p-1+\pO} + \sumi\intM u_i^{p-1+\pM}} + \eps\sumj\intM v_j^{p-1+\pM},
%&\leq K_{p,\theta,\eps}\sbra{1+\sumi\intO u_i^{p-1+\pO}} + \eps\sumj\intM v_j^{p}
\end{aligned}
\end{equation}
where we used Young's inequality at the last step. Adding $$\sumi\bra{\frac{c_p}{2}\intO u_i^p + \intO u_i^{p-1+\pO} + \intM u_i^{p-1+\pM}}$$ to both sides gives
\begin{equation}\label{ff5}
\begin{aligned}
	\bra{\L_p[u]}'(t) + \frac{c_p}{2}\sumi \bra{\intO u_i^{p-2}|\na u_i|^2 + \intO u^p} + \frac 12\sumi\bra{\intO u_i^{p-1+\pO}+\intM u_i^{p-1+\pM}}\\
	\leq \sumi\bra{\frac{c_p}{2}\intO u_i^p + \intO u_i^{p-1+\pO} + \intM u_i^{p-1+\pM}} + K_{p,\theta,\eps} + \eps\sumj\intM v_j^{p-1+\pM}\\
	\leq K_{p,\theta,\eps} + \eps\sumi\bra{\intO u_i^{p-2}|\na u_i|^2 + \intO u_i^p} + \eps\sumj\intM v_j^{p-1+\pM}
\end{aligned}
\end{equation}
where we used Lemma \ref{prepare} at the last step. Integrating the resultant on $(0,T)$ finishes the proof of \eqref{eps_critical}. 

\medskip
To prove \eqref{ff4}, we first show that for each $p\in \mathbb N$, $\eps>0$, there exists $K_{p,\eps}$ such that
\begin{equation}\label{ff13}
	\sumi \|u_i\|_{\LQ{p-1+\pM}}^{p-1+\pM} \leq K_{p,\eps,T} + \eps\sumj\|v_j\|_{\LS{p-1+\pM}}^{p-1+\pM}.
\end{equation}
Indeed, from the $L^1$-bound in Lemma \ref{L1_bound} and interpolation inequality we have, for $\gamma\in (0,1)$ with $\frac{1}{p-1+\pM} = \frac{\gamma}{1} + \frac{1-\gamma}{p-1+\pO}$,
\begin{align*}
	\sumi\|u_i\|_{\LQ{p-1+\pM}}^{p-1+\pM} &\leq \sumi\|u_i\|_{\LQ{1}}^{\gamma(p-1+\pM)}\sumi\|u_i\|_{\LQ{p-1+\pO}}^{(1-\gamma)(p-1+\pM)}\\
	&\leq K_T\sumi\|u_i\|_{\LQ{p-1+\pO}}^{(1-\gamma)(p-1+\pM)}\\
	&\leq K_T\bra{1+\sumi\|u_i\|_{\LQ{p-1+\pO}}^{p-1+\pO}}\\
	&\leq K_{p,\eps,T} + \eps\sumj\|v_j\|_{\LS{p-1+\pM}}^{p-1+\pM}.
\end{align*}
The estimate \eqref{ff4} now follows from \eqref{eps_critical}, \eqref{ff13}, and interpolation.

%
%Denote by each $k\in \mathbb N$, $q_k:= k - 1 + \pM$. From \eqref{eps_critical} and \eqref{ff13} we have for any $k\in \mathbb N$ and $\eps>0$ a constant $K_{k,\eps,T}>0$ such that
%\begin{equation*}
%	\sumi\bra{\|u_i\|_{\LQ{q_k}} + \|u_i\|_{\LS{q_k}}} \leq K_{k,\eps,T} + \eps\sumj\|v_j\|_{\LS{q_k}}.
%\end{equation*}
%Now fix $1<p<\infty$, there exists $j\in \mathbb N$ such that $q_j \leq p \leq q_{j+1}$. Assume that \eqref{ff4} is not true, which means that there exists $\eps_0>0$ such that for any $\ell\in \mathbb N$, there is $s_\ell\in [q_j, q_{j+1}]$ with
%\begin{equation*}
%	\sumi\bra{\|u_i\|_{\LQ{s_\ell}} + \|u_i\|_{\LS{s_\ell}}} \geq \ell + \eps_0\sumj\|v_j\|_{\LS{s_\ell}}.
%\end{equation*}
%By taking a subsequence, we can assume w.l.o.g. that $s_{\ell}\xrightarrow{\ell\to\infty} s$. By interpolation
%\begin{align*}
%	\sumi\bra{\|u_i\|_{\LQ{s_\ell}} + \|u_i\|_{\LS{s_\ell}}} &\leq \sumi\bra{\|u_i\|_{\LQ{q_j}}^{\gamma_\ell}\|u_i\|_{\LQ{q_{j+1}}}^{1-\gamma_\ell} + \|u_i\|_{\LS{q_j}}^{\gamma_\ell}\|u_i\|_{\LQ{q_{j+1}}}^{1-\gamma_\ell}}	
%\end{align*}
\end{proof}

\section{Duality method}\label{sec:3}
We first collect some useful results which will be used in the following sections. The next lemma is about the $L^p$-maximal regularity for heat equation in a smooth manifold without boundary.
\begin{lemma}[\eqref{O}]\label{mr_constant}
	Let $1<p<\infty$, $0\leq \tau < T < \infty$. There exists a constant $C_{\mr,p}^M$ ($\mr$ stands for ``maximal regularity"), which depends only on $p$, $M$, if the dimension $n$, such that, for any $\ff\in L^p(M_{\tau,T})$, and $\uu$ is the solution to
	\begin{equation}\label{parabolic_M}
	\begin{cases}
	\pa_t\uu - \Delta_{M} \uu = \ff, &(x,t)\in M_{\tau,T},\\
	\uu(x,0) = 0, &x\in M,
	\end{cases}
	\end{equation}
	we have the following estimate
	\begin{equation*}
	\|\Delta_{M} \uu\|_{L^p(M_{\tau,T})} \leq C_{\mr,p}^M\|\ff\|_{L^p(M_{\tau,T})}.
	\end{equation*}
\end{lemma}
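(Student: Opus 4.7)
The plan is to deduce Lemma~\ref{mr_constant} from classical maximal $L^p$-regularity for the heat equation on a compact Riemannian manifold, combined with a mean/mean-zero decomposition that prevents the constant from deteriorating as $T-\tau$ grows.

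First, by time-translation invariance of the equation, I would reduce to the case $\tau=0$. The Laplace--Beltrami operator $-\Delta_M$ is non-negative and self-adjoint on $L^2(M)$, with discrete spectrum $0=\lambda_0<\lambda_1\leq\lambda_2\leq\cdots\to\infty$, where $\lambda_0$ corresponds to the one-dimensional space of constant functions. I would then write
\begin{equation*}
\uu(x,t)=\bar{\uu}(t)+\uu^{\perp}(x,t),\qquad \ff(x,t)=\bar{\ff}(t)+\ff^{\perp}(x,t),
\end{equation*}
where an overline denotes the spatial mean over $M$. The mean satisfies the ODE $\bar{\uu}'(t)=\bar{\ff}(t)$ with $\bar{\uu}(0)=0$, and since $\Delta_M\bar{\uu}\equiv 0$ we have $\Delta_M\uu=\Delta_M\uu^{\perp}$. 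Thus the estimate reduces to bounding $\|\Delta_M\uu^{\perp}\|_{L^p(M_T)}$ in terms of $\|\ff^{\perp}\|_{L^p(M_T)}\leq 2\|\ff\|_{L^p(M_T)}$.

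On the closed subspace of mean-zero functions in $L^p(M)$, the operator $-\Delta_M$ is sectorial with spectrum contained in $[\lambda_1,\infty)$, hence generates an \emph{exponentially stable} analytic $C_0$-semigroup. The heat kernel on a smooth compact Riemannian manifold satisfies Gaussian upper bounds (classical results of Davies, Li--Yau, Grigor'yan); feeding these into the Hieber--Pr\"uss / Coulhon--Duong theorem, or alternatively using the Dore--Venni theorem via bounded imaginary powers of $-\Delta_M$, yields maximal $L^p$-regularity for every $p\in(1,\infty)$. A more hands-on alternative is to fix a finite partition of unity subordinate to coordinate charts on $M$, transfer the equation to $\mathbb{R}^n$ where maximal regularity for $\pa_t-\Delta$ is classical (Ladyzhenskaya--Solonnikov--Ural'tseva, Lamberton), and absorb the lower-order commutator terms using compactness of $M$.

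The main technical obstacle is the independence of $C^M_{\mr,p}$ from $T$: a naive application of maximal regularity to $-\Delta_M$ on the full $L^p(M)$ would allow the constant to grow with $T$, because $0$ is an eigenvalue and hence the semigroup is not uniformly stable. This is precisely why the preliminary projection onto the mean-zero subspace is essential: on that subspace the semigroup decays exponentially at rate $\lambda_1>0$, which is exactly the decay needed in the singular-integral/Calder\'on--Zygmund argument underlying Hieber--Pr\"uss to produce a constant depending only on $p$, the dimension $n$, and the geometry of $M$. Undoing the time translation then restores the statement for arbitrary $\tau\geq 0$.
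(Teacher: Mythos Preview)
Your argument is correct, but it is considerably more elaborate than what the paper does: the paper simply cites Lamberton \cite{lamberton1987equations}, whose theorem says that a bounded analytic semigroup on $L^p$ that is also a contraction semigroup has maximal $L^p$-regularity with a constant independent of the time horizon. Since the heat semigroup $e^{t\Delta_M}$ on a compact manifold is Markov (hence a contraction on every $L^p$) and analytic, Lamberton's result applies directly and yields the $T$-independent constant without any preliminary decomposition.

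This means your assertion that ``a naive application of maximal regularity to $-\Delta_M$ on the full $L^p(M)$ would allow the constant to grow with $T$'' is somewhat overstated: the presence of the eigenvalue $0$ would indeed obstruct a $T$-independent bound on $\|\uu\|_{L^p(M_T)}$, but it does \emph{not} obstruct the bound on $\|\Delta_M\uu\|_{L^p(M_T)}$, precisely because $\Delta_M$ annihilates constants (equivalently, the kernel $\Delta_M e^{t\Delta_M}$ automatically lives on the mean-zero subspace and decays exponentially). Your mean/mean-zero splitting therefore makes explicit something that is already built into the quantity being estimated. That said, your route via the spectral gap and Hieber--Pr\"uss/Coulhon--Duong is a perfectly valid and more self-contained alternative to quoting Lamberton, and it has the pedagogical advantage of explaining \emph{why} the constant is $T$-independent rather than inheriting this from a black-box citation.
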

\begin{proof}
	The proof of this lemma can be found in \cite[Theorem 1]{lamberton1987equations}\footnote{A similar result was shown in \cite[Theorem 3.5]{sharma2016global} where the constant $C_{\mr,p}^M$ might depend on the time horizon $T - \tau$}. We emphasize that fact that the constant $C_{\mr,p}^M$ is independent of $T$ and $\tau$.
\end{proof}
The following crucial lemma provides the regularity of the duality problem suited for volume-surface systems.
\begin{lemma}[\eqref{O}]\label{dual_boundary}
	Assume that $0<\tau<T$, $1<q<\infty$ and $\psi\in L^{q}(M_{\tau,T})$. Let $\phi$ be the solution to 
	\begin{equation}\label{dual_eq}
	\left\{
	\begin{aligned}
	\pa_t\phi + \Delta \phi = 0, &\text{ on }Q_{\tau,T},\\
	\pa_t\phi + \delta \Delta_M \phi = -\psi, &\text{ on } M_{\tau,T},\\
	\phi(x,T) = 0 &\text{ on } \overline{\Omega}.
	\end{aligned}
	\right.
	\end{equation}
	Then, we have the following estimate
	\begin{equation}\label{scaled_ineq}
	\|\Delta_M \phi\|_{L^q(M_{\tau,T})} \leq \frac{C_{\mr,q}^M}{\delta}\|\psi\|_{L^q(M_{\tau,T})},		
	\end{equation}
	where $C_{\mr,q}^M$ is the maximal regularity constant in Lemma \ref{mr_constant}. Moreover, with $\xi = \frac{q}{n+1}$ we have
	\begin{equation}\label{W21_estimate}
	\|\phi\|_{W^{2,1}_{q+\xi}(Q_{\tau,T})} + \|\phi\|_{W^{2,1}_q(M_{\tau,T})} + \|\phi(\tau)\|_{\LO{q+\xi}}+ \|\phi(\tau)\|_{\LM{q}} \leq C_{T-\tau} \|\psi\|_{L^q(M_{\tau,T})},
	\end{equation}
	and
	\begin{equation}\label{flux_estimate}
	\norm{\pa_{\eta}\phi}_{L^{q+\xi}(M_{\tau,T})} \leq C_{T-\tau}\|\psi\|_{L^{q}(M_{\tau,T})}.
	\end{equation}
	Consequently, 
	\begin{equation}\label{Lq^*}
		\|\phi\|_{L^{q^{\dag}}(Q_{\tau,T})} + \|\phi\|_{L^{q^*}(M_{\tau,T})} \leq C_{T-\tau}\|\psi\|_{L^{q}(M_{\tau,T})}
	\end{equation}
	where
	\begin{equation}\label{crit_exp}
		q^{\dag} = 
		\begin{cases}
					\frac{(n+2)q}{n+1-2q} &\text{ if } q < \frac{n+1}{2},\\
					<+\infty \text{ abitrary } &\text{ if } q = \frac{n+1}{2},\\
					+\infty &\text{ if } q > \frac{n+1}{2}.
					\end{cases}\quad
		\text{ and }\quad
		q^* = \begin{cases}
			\frac{(n+1)q}{n+1-2q} &\text{ if } q < \frac{n+1}{2},\\
			<+\infty \text{ abitrary } &\text{ if } q = \frac{n+1}{2},\\
			+\infty &\text{ if } q > \frac{n+1}{2}.
			\end{cases}
	\end{equation}
	
	Moreover, if $\psi \geq 0$ a.e. in $M_{\tau,T}$, then $\phi\geq 0$.
\end{lemma}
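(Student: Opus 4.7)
The plan is to exploit the fact that the dual system decouples: the trace $\phi|_M$ satisfies an autonomous heat equation on $M$ with source $-\psi$, and the interior values are then recovered by solving a backward heat equation on $\Omega$ with Dirichlet data $\phi|_M$. For \eqref{scaled_ineq}, I would time-reverse by $\tilde\phi(x,t) := \phi(x, T+\tau-t)$ so that $\tilde\phi$ satisfies $\pa_t\tilde\phi - \delta\Delta_M\tilde\phi = \tilde\psi$ on $M_{\tau,T}$ with zero initial value, then rescale time via $w(x,s) := \tilde\phi(x,\tau+s/\delta)$ to remove the $\delta$ from the principal part, apply Lemma \ref{mr_constant}, and undo both changes of variable. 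The $1/\delta$ factor in \eqref{scaled_ineq} is precisely what the ratio of scalings produces after comparing the $L^q$-norms before and after the substitution.

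Once \eqref{scaled_ineq} is in hand, the equation on $M$ yields $\pa_t\phi|_M \in L^q(M_{\tau,T})$ as well, so $\phi|_M \in W^{2,1}_q(M_{\tau,T})$ with norm controlled by $\|\psi\|_{L^q(M_{\tau,T})}$. Two consequences follow: the parabolic trace theorem provides $\phi(\tau)|_M \in \LM{q}$, and viewing $M_{\tau,T}$ as an $(n+1)$-dimensional region, isotropic Sobolev embedding gives $\phi|_M \in L^{q^*}(M_{\tau,T})$ with $q^*$ as in \eqref{crit_exp}. For the interior, the equation on $Q_{\tau,T}$ is a backward heat equation with prescribed Dirichlet data $\phi|_M$ of this regularity and zero terminal condition; Solonnikov-type parabolic regularity with inhomogeneous boundary data then produces $\phi \in W^{2,1}_{q+\xi}(Q_{\tau,T})$ with the sharp gain $\xi = q/(n+1)$, together with the trace bound $\phi(\tau) \in \LO{q+\xi}$. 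The flux estimate \eqref{flux_estimate} follows from the standard trace theorem for normal derivatives of functions in $W^{2,1}_{q+\xi}(Q_{\tau,T})$, and the parabolic Sobolev embedding $W^{2,1}_{q+\xi}(Q_{\tau,T}) \hookrightarrow L^{q^\dag}(Q_{\tau,T})$ yields the interior bound in \eqref{Lq^*}; one verifies the exponent arithmetic $(n+2)(q+\xi)/(n+2-2(q+\xi)) = (n+2)q/(n+1-2q) = q^\dag$ by direct computation in the subcritical regime $q < (n+1)/2$.

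For the non-negativity assertion, suppose $\psi \geq 0$. The time-reversed surface equation $\pa_t\tilde\phi - \delta\Delta_M\tilde\phi = \tilde\psi \geq 0$ with zero initial value yields $\tilde\phi|_M \geq 0$ by the parabolic maximum principle on the closed manifold $M$. The interior equation for $\tilde\phi$ is then a forward heat equation on $\Omega$ with nonnegative Dirichlet boundary data and zero initial value, so $\tilde\phi \geq 0$ by the standard maximum principle; reversing the time change yields $\phi \geq 0$ on $Q_{\tau,T}$.

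The main obstacle I anticipate is establishing the precise $W^{2,1}_{q+\xi}$ interior regularity with the sharp gain $\xi = q/(n+1)$. This requires matching the parabolic trace space of $W^{2,1}_{q+\xi}(Q_{\tau,T})$ on the lateral boundary against $W^{2,1}_q(M_{\tau,T})$ and then invoking Solonnikov's theory for parabolic boundary value problems with inhomogeneous Dirichlet data. Once this compatibility of trace spaces is identified, the remaining ingredients (the rescaling for the $\delta$-dependence, the Sobolev embeddings on $M_{\tau,T}$ and on $Q_{\tau,T}$, and the maximum principle arguments) are routine.
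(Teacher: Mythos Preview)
Your proposal is correct and follows essentially the same route as the paper: decouple the system by first solving the autonomous surface heat equation for $\phi|_M$ (using time reversal and the time rescaling $t\mapsto t/\delta$ together with Lemma~\ref{mr_constant} to get \eqref{scaled_ineq}), then feed $\phi|_M$ as Dirichlet data into the backward heat equation on $\Omega$, invoking maximal regularity for inhomogeneous Dirichlet problems (the paper cites Pr\"uss) after matching the trace space via the embedding $W^{2,1}_q(M_{\tau,T})\hookrightarrow L^{r}(\tau,T;W^{2-1/r,r}(M))\cap W^{1-1/(2r),r}(\tau,T;L^r(M))$ with $r=q+\xi$. Your identification of the sharp gain $\xi=q/(n+1)$ and of this trace-space compatibility as the crux is exactly right; the remaining steps (flux trace, parabolic Sobolev embeddings for $q^\dag,q^*$, and the maximum-principle argument for nonnegativity, which the paper in fact leaves implicit) are handled just as you describe.
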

\begin{remark}\hfill\
	\begin{itemize}
		\item At the first glance, the dual problem \eqref{dual_eq} looks like a backward parabolic equation. However, since the ``initial data" is considered at $t = T$, by the change of variable $s = T-\tau-t$, \eqref{dual_eq} transforms into the usual forward parabolic equation.
		\item To solve the dual problem, we first solve the boundary equation $\pa_t\phi + \delta\Delta_M\phi = -\psi$ and define $\Psi:= \phi|_{M_{\tau,T}}$. Then, we consider the heat operator with inhomogeneous Dirichlet boundary condition: $\pa_t \phi + \Delta \phi = 0$ in $Q_{\tau,T}$ and $\phi|_{M_{\tau,T}} = \Psi$. See more details in \cite{sharma2016global}.
\end{itemize}
\end{remark}
\begin{proof}
	The bound 
	\begin{equation*}
		\norm{\phi}_{W_q^{2,1}(M_{\tau,T})} \leq C_{T-\tau}\norm{\psi}_{L^q(M_{\tau,T})}
	\end{equation*}
	can be found in \cite{sharma2016global}. In particular,
	\begin{equation*}
		\|\pa_t\phi\|_{L^q(M_{\tau,T})} \leq C_{T-\tau}\|\psi\|_{L^q(M_{\tau,T})}.
	\end{equation*}
	Therefore, by H\"older's inequality
	\begin{equation*}
		\|\phi(\tau)\|_{\LM{q}}^q = \intM\abs{\int_\tau^T\pa_t\phi}^q \leq (T-\tau)^{q-1}\intM\int_{\tau}^{T}|\pa_t\phi|^q \leq C_{T-\tau}\|\psi\|_{L^{q}(M_{\tau,T})}^q
	\end{equation*}
	which implies
	\begin{equation*}
		\|\phi(\tau)\|_{\LM{q}} \leq C_{T-\tau}\|\psi\|_{L^q(M_{\tau,T})}.
	\end{equation*}
	
	To show the improved bound in $Q_{\tau,T}$ and for the normal derivative, we use the following embedding, cf. \cite[Lemma 3.3]{LSU68},
	\begin{equation*}
		W_q^{2,1}(M_{\tau,T})\hookrightarrow L^{r}(\tau,T;W^{2-\frac{1}{r}, r}(M))\cap W^{1-\frac{1}{2r}, r}(0,T;L^r(M))
	\end{equation*}
	for all
	\begin{equation*}
		r \leq \frac{(n+2)q}{n+1} = q + \frac{q}{n+1}.
	\end{equation*}
	Therefore, by choosing $\xi=\frac{q}{n+1}$ and defining $r= q + \xi$, we have $\Psi:= \phi|_{M_T}\in L^{r}(\tau,T;W^{2-\frac{1}{r}, r}(M))\cap W^{1-\frac{1}{2r}, r}(0,T;L^r(M))$. Now we can apply the maximal regularity for equation with inhomogeneous Dirichlet boundary condition, see e.g. \cite{pruss2002maximal},
	\begin{equation*}
	\begin{cases}
	\pa_t\phi + \Delta \phi = 0, &\text{ on } Q_{\tau,T},\\
	\phi = \Psi, &\text{ on } M_{\tau,T},\\
	\phi(x,T) = 0, &\text{ on } \overline{\Omega},
	\end{cases}
	\end{equation*}
	to obtain
	\begin{align*}
		\norm{\phi}_{W_{q+\xi}^{2,1}(Q_{\tau,T})} \leq C_{T-\tau}\norm{\Psi}_{L^{r}(\tau,T;W^{2-\frac{1}{r}, r}(M))\cap W^{1-\frac{1}{2r}, r}(0,T;L^r(M))}\\
		\leq C_{T-\tau}\norm{\phi}_{W_q^{2,1}(M_{\tau,T})} \leq C_{T-\tau}\norm{\psi}_{L^q(M\times(\tau,T))}.
	\end{align*}
	The estimate of the normal derivative \eqref{flux_estimate} follows from the bound of $\phi$ in $W_{q+\xi}^{2,1}(Q_{\tau,T})$ and Lemma \cite[Lemma 6.9]{sharma2016global}.
	
	The estimate \eqref{Lq^*} then follows from \eqref{W21_estimate} and the embedding theory (\cite[Lemma 3.3]{LSU68}).
	
	It remains to show \eqref{scaled_ineq}. We define scaled functions $\wh{\phi}(x,t) = \phi(x,t/\delta)$ and $\wh{\psi}(x,t) = \psi(x,t/\delta)$. From the equation of $\phi$, it follows that
	\begin{equation*}
	\begin{cases}
		\partial_t \wh{\phi} - \Delta_M\wh{\phi} = \frac{1}{\delta}\wh{\psi}, &(x,t)\in M_{\delta\tau,\delta T},\\
		\wh{\phi}(x,\delta \tau) = 0, &x\in M.
	\end{cases}
	\end{equation*}
	From Lemma \ref{mr_constant},
	\begin{equation*}
		\|\Delta_M\wh{\phi}\|_{L^q(M_{\delta \tau, \delta T})} \leq \frac{C_{\mr,q}^M}{\delta}\|\wh \psi\|_{L^q(M_{\delta \tau, \delta T})}.
	\end{equation*}
	By switching back to the original variables we obtain easily
	\begin{equation*}
		\delta\|\Delta_M\phi\|_{L^q(M_{\tau,T})}^q = \int_{\delta \tau}^{\delta T}\intM |\Delta_M\wh\phi|^q \leq \bra{\frac{C_{\mr,q}^M}{\delta}}^q\int_{\delta \tau}^{\delta T}\intM |\wh \psi|^q = \frac{\bra{C_{\mr,q}^M}^q}{\delta^{q-1}}\|\psi\|_{L^q(M_{\tau,T})}^q,
	\end{equation*}
	which yields the desired inequality \eqref{scaled_ineq}.
\end{proof}
We show that if \eqref{quasi-uniform} is true for some $\Lam$, then it's also true for $\Lam+\varkappa$ for small enough $\varkappa>0$ in the following sense.
\begin{lemma}\label{lem:improved_quasi_uniform}
	Assume that \eqref{quasi-uniform} holds for some $\Lam$. Then there exists $\varkappa_0>0$ such that
	\begin{equation*}
	\frac{\delta_{\max} - \delta_{\min}}{\delta_{\max} + \delta_{\min}}C_{\mr,(\Lam+\varkappa)'}^M < 1 \quad \text{ for all } \quad 0<\varkappa<\varkappa_0
	\end{equation*}
	where 
	\begin{equation*}
	(\Lam+\vk)' = \frac{\Lam+\vk}{\Lam + \vk - 1}
	\end{equation*}
	is the H\"older conjugate exponent of $\Lam+\vk$.
\end{lemma}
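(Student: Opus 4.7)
The overall strategy is a straightforward continuity/perturbation argument: since the condition \eqref{quasi-uniform} is a \emph{strict} inequality, it suffices to show that both sides depend continuously (or at least upper semi-continuously on the left) on $\varkappa$ at $\varkappa = 0$. The only nontrivial part is establishing continuity of $q \mapsto C_{\mr,q}^M$ on the open interval $(1,\infty)$; the Hölder conjugate map is elementary.

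First, I would note that the map $\varkappa \mapsto (\Lambda+\varkappa)' = (\Lambda+\varkappa)/(\Lambda+\varkappa-1)$ is $C^1$ on a neighborhood of $0$ with $(\Lambda + \varkappa)' \to \Lambda'$ as $\varkappa \to 0^+$ (and $(\Lambda + \varkappa)' < \Lambda'$ for $\varkappa > 0$ since the conjugate map is decreasing on $(1,\infty)$). Second, I would establish continuity of the function $q \mapsto C_{\mr,q}^M$ on $(1,\infty)$. The maximal regularity operator $T: \ff \mapsto \Delta_M \uu$ solving \eqref{parabolic_M} is linear and, by Lemma \ref{mr_constant}, bounded on $L^q(M_{\tau,T})$ for every $q \in (1,\infty)$. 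By the Riesz--Thorin interpolation theorem applied to $T$, for any $1 < q_0 < q_1 < \infty$ and $\theta \in (0,1)$ with $1/q_\theta = (1-\theta)/q_0 + \theta/q_1$, one has
\begin{equation*}
C_{\mr,q_\theta}^M \leq \bigl(C_{\mr,q_0}^M\bigr)^{1-\theta}\bigl(C_{\mr,q_1}^M\bigr)^{\theta}.
\end{equation*}
In particular, $q \mapsto \log C_{\mr,q}^M$ is convex (as a function of $1/q$), hence continuous, on $(1,\infty)$; therefore $q \mapsto C_{\mr,q}^M$ is continuous on $(1,\infty)$.

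Combining these two facts: the composition $\varkappa \mapsto C_{\mr,(\Lambda+\varkappa)'}^M$ is continuous at $\varkappa = 0$, so
\begin{equation*}
\lim_{\varkappa \searrow 0}\frac{\delta_{\max}-\delta_{\min}}{\delta_{\max}+\delta_{\min}}C_{\mr,(\Lambda+\varkappa)'}^M = \frac{\delta_{\max}-\delta_{\min}}{\delta_{\max}+\delta_{\min}}C_{\mr,\Lambda'}^M < 1
\end{equation*}
by assumption. The existence of the required $\varkappa_0 > 0$ then follows immediately from the definition of continuity.

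The only conceptual obstacle is the continuity of $C_{\mr,q}^M$ in $q$, and this is entirely handled by Riesz--Thorin once we know, via Lemma \ref{mr_constant}, that $C_{\mr,q}^M$ is finite for every $q \in (1,\infty)$; log-convex functions are automatically continuous on the interior of their domain of finiteness. If one preferred to avoid interpolation, an alternative route would be to use Stein's interpolation theorem, or to note that standard parabolic $L^q$ theory on a compact smooth manifold without boundary produces a constant $C_{\mr,q}^M$ that depends continuously on $q$ through explicit Calderón--Zygmund estimates, but the Riesz--Thorin route is shortest.
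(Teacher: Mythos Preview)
Your proof is correct and uses the same key ingredient as the paper, namely the Riesz--Thorin interpolation theorem applied to the maximal regularity operator $\ff \mapsto \Delta_M \uu$. Your packaging via log-convexity of $1/q \mapsto \log C_{\mr,q}^M$ and the resulting continuity is in fact slightly cleaner than the paper's argument, which applies Riesz--Thorin with the specific midpoint exponent $\Lambda'_\eta$ to deduce only $\liminf_{\eta\to 0^+} C_{\mr,\Lambda'-\eta}^M \le C_{\mr,\Lambda'}^M$.
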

\begin{proof}
	It's sufficient to show that 
	\begin{equation}\label{claim2}
	\bra{C_{\mr,\Lam'}^M}^-:= \liminf_{\eta\to 0^+}C_{\mr,\Lam' - \eta}^M \leq C_{\mr,\Lam'}^M.
	\end{equation}
	Let $\Lam_\eta'$ satisfy
	\begin{equation*}
	\frac{1}{\Lam_\eta'} = \frac{1}{2}\sbra{\frac{1}{\Lam'} + \frac{1}{\Lam'-\eta}} \quad \text{ or equivalently } \quad \Lam_\eta' = \Lam' - \frac{\Lam'\eta}{2\Lam' - \eta}.
	\end{equation*}
	By the Riesz-Thorin interpolation theorem (cf. \cite[Chapter 2]{lunardi2018interpolation}),
	\begin{equation*}
	C_{\mr,\Lam_\eta'}^M \leq C_{\mr,\Lam'}^{1/2}C_{\mr,\Lam'-\eta}^{1/2}.
	\end{equation*}
	By letting $\eta \to 0$,
	\begin{equation*}
	\bra{C_{\mr,\Lam'}^M}^{-} \leq C_{\mr,\Lam'}^{1/2}\sbra{\bra{C_{\mr,\Lam'}^M}^{-}}^{1/2}
	\end{equation*}
	which yields the desired claim \eqref{claim2} and thus finishes the proof of Lemma \ref{lem:improved_quasi_uniform}.
\end{proof}

\begin{proposition}[\eqref{O}-\eqref{D}-\eqref{F0}-\eqref{F1}-\eqref{F2}-\eqref{eq:intsum1}-\eqref{eq:intsum2}-\eqref{pOpM}-\eqref{quasi-uniform}]\label{pro:duality}
	There exist constants $C_T$ and $\gamma>0$ such that
	\begin{equation*}
	\sumi\bra{\norm{u_i}_{\LQ{\Lambda+\gamma}} + \norm{u_i}_{\LS{\Lambda+\gamma}}} +\sumj \norm{v_j}_{\LS{\Lambda+\gamma}} \leq C_T.
	\end{equation*}
\end{proposition}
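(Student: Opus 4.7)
The plan is to use the duality method to control $V := \sumj b_j v_j$ in $\LS{q}$ for $q = \Lambda + \gamma$ with $\gamma>0$ small; once $V$ is bounded in $\LS{q}$, Lemma \ref{lemma:Loft} will furnish the corresponding bounds on $u_i$ in $\LQ{q}$ and $\LS{q}$ (and hence on $v_j$). Set $r = q/(q-1)$. By Lemma \ref{lem:improved_quasi_uniform}, one may choose $\gamma>0$ small enough that
\[
\kappa := \frac{\delta_{\max}-\delta_{\min}}{\delta_{\max}+\delta_{\min}}\,C^M_{\mr,r} < 1.
\]

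Given a nonnegative test function $\psi\in\LS{r}$ with $\norm{\psi}_{\LS{r}}\le 1$, let $\phi\ge 0$ solve the dual problem \eqref{dual_eq} with $\delta=(\delta_{\max}+\delta_{\min})/2$. Multiplying the $v_j$-equation by $b_j\phi$, summing over $j$, integrating by parts in time (using $\phi(T)=0$) and on the closed manifold $M$, and then substituting $\pa_t\phi = -\psi-\delta\Delta_M\phi$ from \eqref{dual_eq}, yields the identity
\[
\intMT V\psi \;=\; \intM V(0)\phi(0) \;+\; \intMT(\widetilde V - \delta V)\Delta_M\phi \;+\; \sumj b_j\intMT H_j(u,v)\phi,
\]
with $\widetilde V := \sumj b_j\delta_j v_j$. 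Since $|\widetilde V - \delta V| \le \tfrac12(\delta_{\max}-\delta_{\min})V$, Hölder's inequality together with the maximal regularity bound $\norm{\Delta_M\phi}_{\LS{r}} \le (C^M_{\mr,r}/\delta)\norm{\psi}_{\LS{r}}$ from Lemma \ref{dual_boundary} bounds the middle term by $\kappa\norm{V}_{\LS{q}}\norm{\psi}_{\LS{r}}$. The initial term is $\le C\norm{\phi(0)}_{\LM{r}} \le C_T\norm{\psi}_{\LS{r}}$ by the initial boundedness of $V(0)$ and the estimate \eqref{W21_estimate}.

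For the nonlinear term, the mass control \eqref{F2} combined with $G_i\ge 0$ gives $\sumj b_j H_j \le L(\sumi u_i + V) + K$ on $M$. I pair with $\phi$ through Hölder in the improved exponent $r^*$ of Lemma \ref{dual_boundary}, using $\norm{\phi}_{L^{r^*}(M_T)}\le C_T\norm{\psi}_{\LS{r}}$, which leaves the factors $\norm{V}_{L^{(r^*)'}(M_T)}$ and $\norm{u_i}_{L^{(r^*)'}(M_T)}$ with $(r^*)'<q$. Converting the boundary norms of $u_i$ to those of $v_j$ via Lemma \ref{lemma:Loft} and interpolating the resulting $L^{(r^*)'}$-norms of $v_j$ between $L^1(M_T)$ (uniformly controlled by Lemma \ref{lemma:L1}) and $\LS{q}$ produces strictly sub-linear dependence on $\norm{V}_{\LS{q}}$; Young's inequality then bounds the contribution by $(\eps\norm{V}_{\LS{q}} + C_{T,\eps})\norm{\psi}_{\LS{r}}$ for any prescribed $\eps>0$.

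Summing the three contributions and taking the supremum over admissible $\psi\ge 0$ (which recovers $\norm{V}_{\LS{q}}$ by duality since $V\ge 0$) yields
\[
\norm{V}_{\LS{q}} \;\le\; (\kappa+\eps)\norm{V}_{\LS{q}} + C_{T,\eps}.
\]
Choosing $\eps>0$ so that $\kappa+\eps<1$ and absorbing gives $\norm{V}_{\LS{q}}\le C_T$, hence $\norm{v_j}_{\LS{q}}\le C_T$ for every $j$; the remaining bounds on $\norm{u_i}_{\LQ{q}}$ and $\norm{u_i}_{\LS{q}}$ then follow from Lemma \ref{lemma:Loft} with $p=q$. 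The main obstacle is the nonlinear term $\intMT H_j\phi$: its treatment requires the sharp improved integrability $\phi\in L^{r^*}(M_T)$ with $r^*>r$ from Lemma \ref{dual_boundary}, coupled with a careful interpolation against the uniform $L^1$-bounds of Lemma \ref{lemma:L1}, so that after Young the dependence on $\norm{V}_{\LS{q}}$ is strictly sub-linear and can be absorbed alongside the principal $\kappa$-term.
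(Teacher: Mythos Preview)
Your approach rests on the assertion that $G_i\ge 0$, which you invoke to discard $\sumi a_iG_i$ from the mass control inequality \eqref{eq:balsysmain} and obtain a standalone bound $\sumj b_jH_j\le L(\sum u_i + \sum v_j)+K$. Although assumption \eqref{F0} literally writes the codomain of $G_i$ as $\R_+$, this is evidently a misprint: the quasi-positivity condition \eqref{F1} would be vacuous for $G_i$ otherwise, and the applications in Section~\ref{sec:application} (where the boundary fluxes are manifestly sign-changing, e.g.\ $F(u,v)=-(K_0+k_bv_N)u+k_dv_1$ in \eqref{eq:example1}) explicitly violate $G_i\ge 0$. The paper's own argument for this proposition never uses nonnegativity of $G_i$, and in the intended generality your bound on $\sumj b_jH_j$ is simply not available.

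The key device you are missing is precisely how the paper copes with the uncontrolled sign of $G_i$: one tests the \emph{bulk} equations for $u_i$ against $a_i\phi$ and integrates by parts over $Q_T$. The identity $0=-\sumi\intQT a_iu_i(\pa_t\phi+\Delta\phi)$, after integration by parts, produces the boundary term $\intMT\phi\bigl(\sumi a_iG_i(u,v)\bigr)$ from the flux condition $d_i\pa_\eta u_i=G_i$. Adding this to your surface identity makes the combination $\sumi a_iG_i+\sumj b_jH_j$ appear, and \emph{that} sum is what \eqref{F2} controls. This bulk step, however, also generates terms like $\intQT a_i(d_i-1)u_i\Delta\phi$ and $\intMT a_id_iu_i\,\pa_\eta\phi$, which must be estimated in $L^{\Lambda}$ while $\psi$ lives only in $L^{(\Lambda+\gamma)'}$; the paper secures this by choosing $\gamma$ small enough that $\Lambda'<(\Lambda+\gamma)'+\frac{(\Lambda+\gamma)'}{n+1}$, so the surface-to-bulk regularity gain in Lemma~\ref{dual_boundary} yields $\phi\in W^{2,1}_{\Lambda'}(Q_T)$ and $\pa_\eta\phi\in L^{\Lambda'}(M_T)$. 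None of this appears in your proposal because you bypassed the bulk computation entirely. If you are willing to assume $G_i\ge 0$, your streamlined argument is essentially correct; for the proposition as intended, it has a genuine gap.
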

\begin{proof}
	Let $\vk_0$ be given in Lemma \ref{lem:improved_quasi_uniform}, and choose $\vk \in (0,\vk_0)$ small enough such that
	\begin{equation*}
	\frac{\Lam+\vk}{n+1} > \frac{\vk}{\Lam-1}.
	\end{equation*}
	Note that this is equivalent to
	\begin{equation}\label{l1}
	\Lam' = \frac{\Lam}{\Lam-1}< (\Lam+\varkappa)' + \frac{(\Lam+\varkappa)'}{n+1}
	\end{equation}
	where $(\Lam+\vk)'$ is the H\"older conjugate exponent of $\Lam+\vk$. Since we only need \eqref{l1} for $\vk$ sufficiently small and positive, note that \eqref{l1} is true when $\vk =0$, and therefore it is true for small positive $\vk$.
	
	Let $0\leq \psi\in \LS{(\Lam+\vk)'}$ with $ \|\psi\|_{\LS{(\Lam+\vk)'}} = 1$, and let $\phi$ be the solution to the dual problem \eqref{dual_eq} with
	\begin{equation*}
	\delta = \frac{\delta_{\max}+\delta_{\min}}{2},
	\end{equation*}
	where $\delta_{\max}$ and $\delta_{\min}$ are defined in \eqref{dmaxmin}.
	Thanks to \eqref{W21_estimate} in Lemma \ref{dual_boundary} and \eqref{l1}, we have
	\begin{equation}\label{eq:W12p}
	\|\phi\|_{W^{2,1}_{\Lam'}(Q_T)} + \norm{\pa_{\eta}\phi}_{\LS{\Lam'}} + \|\phi\|_{W^{2,1}_{(\Lam+\vk)'}(M_T)} + \norm{\phi(0)}_{\LO{\Lam'}} + \norm{\phi(0)}_{\LM{(\Lam+\vk)'}}\le C_{\Lam',T}.
	\end{equation}
	In particular, thanks to Lemma \ref{dual_boundary},
	\begin{equation*}
	\norm{\Delta_M \phi}_{\LS{(\Lam+\vk)'}} \leq \frac{C_{\mr,(\Lam+\vk)'}^M}{\delta} =  \frac{2C_{\mr,(\Lam+\vk)'}^M}{\delta_{\max}+\delta_{\min}}.
	\end{equation*}
	By integration by parts, we have
	%		\begin{equation}\label{est1}
	\begin{align}
	0 &= -\sumi\intQT a_iu_i(\pa_t\phi + \Delta\phi)\nonumber\\
	&=\sumi\intO a_iu_{i,0}\phi(0) - \sumi \intMT a_id_iu_i\pa_{\eta}\phi + \sumi \intMT a_i\phi d_i\pa_{\eta}u_i\nonumber\\
	&\quad + \sumi \intQT a_i\phi(\pa_t u_i - d_i\Delta u_i) + \sumi \intQT(d_i-1)a_i u_i\Delta \phi\nonumber\\
	&\leq C\sumi \|u_{i,0}\|_{\LO{\Lam}}\norm{\phi(0)}_{\LO{\Lam'}} + C\sumi \norm{u_i}_{\LS{\Lam}}\norm{\pa_{\eta}\phi}_{\LS{\Lam'}}\nonumber\\
	&\quad + \intMT \phi \bra{\sumi a_iG_i(u,v)} + \intQT \phi \sumi a_iF_i(u) + C\sumi\norm{u_i}_{\LQ{\Lam}}\norm{\Delta\phi}_{\LQ{\Lam'}}\nonumber\\
	&\leq C_T\sumi\bra{ \norm{u_{i,0}}_{\LO{\Lam}} +\norm{u_i}_{\LQ{\Lam}}+ \norm{u_i}_{\LS{\Lam}}}\nonumber\\
	&\quad + L\intQT \phi\bra{\sumi u_i+1} + \intMT \phi\bra{\sumi a_iG_i(u,v)}\nonumber\\
	&\leq C_T\sumi\bra{1+ \norm{u_{i,0}}_{\LO{\Lam}} +\norm{u_i}_{\LQ{\Lam}}+ \norm{u_i}_{\LS{\Lam}}} + \intMT \phi\bra{\sumi a_iG_i(u,v)}.\label{est1}
	\end{align}
	%		\end{equation}
	On the other hand, we have
	%		\begin{equation}
	\begin{align}
	&\intMT \bra{\sumj b_jv_j}\psi\nonumber\\
	&= -\sumj \intMT b_jv_j(\pa_t\phi + \delta\Delta_M \phi)\nonumber\\
	&= \sumj \intM v_{j,0}\phi(0) +\sumj\intMT b_j\phi(\pa_tv_j - \delta_j\Delta_Mv_j) + \sumj\intMT b_j(\delta_j - \delta)v_j\Delta_M\phi\nonumber\\
	&\leq \sumj\norm{v_{j,0}}_{\LM{\Lam+\vk}}\norm{\phi(0)}_{\LM{(\Lam+\vk)'}} + \sumj\intMT \phi b_jH_j(u,v)\nonumber\\
	&\quad + \frac{\delta_{\max}-\delta_{\min}}{2}\norm{\sumj b_jv_j}_{\LS{\Lam+\vk}}\norm{\Delta_M\phi}_{\LS{(\Lam+\vk)'}}\nonumber\\
	&\leq C_{(\Lam+\vk)',T}\sumj\norm{v_{j,0}}_{L^{\Lam+\vk}(M)} +  \intMT \phi \bra{ \sumj b_jH_j(u,v)}\nonumber\\
	&\quad + \frac{\delta_{\max}-\delta_{\min}}{\delta_{\max}+\delta_{\min}}C_{\mr,(\Lam+\vk)'}^M\norm{\sumj b_jv_j}_{\LS{\Lam+\vk}}.\label{est2}
	\end{align}
	%		\end{equation}
	By summing \eqref{est1} and \eqref{est2} and using the second inequality \eqref{F2}, we have
	\begin{equation}\label{est3}
	\begin{aligned}
	&\intMT \bra{\sumj b_jv_j}\psi\\
	&\leq C_T\sumi\bra{1+\norm{u_{i,0}}_{\LO{\Lam}}+\norm{u_i}_{\LQ{\Lam}}+\norm{u_i}_{\LS{\Lam}}} + C_T\sumj\|v_{j,0}\|_{\LM{\Lam+\vk}}\\
	&\quad + \intMT\phi\bra{\sumi u_i + \sumj v_j+1}+ \frac{\delta_{\max}-\delta_{\min}}{\delta_{\max}+\delta_{\min}}C_{\mr,(\Lam+\vk)'}^M\norm{\sumj b_jv_j}_{\LS{\Lam+\vk}}.
	\end{aligned}
	\end{equation}
	We show now that by choosing $\vk$ small enough, we have $\norm{\phi}_{\LS{\Lam'}} \leq C_T$. Indeed, from the boundary bound in \eqref{eq:W12p} we can use the embedding theorem to have
	\begin{equation*}
	\norm{\phi}_{\LS{h(\vk)}} \leq C_{T}\norm{\phi}_{W^{2,1}_{(\Lam+\vk)'}(M_T)} \leq C_T
	\end{equation*}
	where $h(\vk) = \frac{(n+1)(\Lam+\vk)'}{n+1-2(\Lam+\vk)'}$ when $(\Lam+\vk)' < \frac{n+1}{2}$, and $h(\vk)$ can be chosen arbitrarily large if $(\Lam+\vk)' \geq \frac{n+1}{2}$. Since $h(\vk)$ is strictly increasing in $\vk$ and $h(0) > \Lam'$, we can choose $\vk \in (0,\vk_0)$ small enough such that \begin{equation}\label{vk_small}
	\Lam' < h(\vk) = \frac{(n+1)(\Lam+\vk)'}{n+1-2(\Lam+\vk)'},
	\end{equation}
	and consequently,
	\begin{equation*}
	\norm{\phi}_{\LS{\Lam'}} \leq C_T\norm{\phi}_{\LS{h(\vk)}} \leq C_{T}.
	\end{equation*}
	Thus, we can estimate
	\begin{equation}\label{est4}
	\begin{aligned}
	&\intMT\phi\bra{\sumi u_i + \sumj v_j + 1}\\
	&\leq \bra{C_T+\sumi \norm{u_i}_{\LS{\Lam}}}\norm{\phi}_{\LS{\Lam'}} + \frac{1}{\min_{j=1,\ldots, m_2}b_j}\norm{\phi}_{\LS{\Lam'}}\norm{\sumj b_jv_j}_{\LS{\Lam}}\\
	&\leq C_T\bra{1+\sumi \norm{u_i}_{\LS{\Lam}} + \norm{\sumj b_jv_j}_{\LS{\Lam}}}.
	\end{aligned}
	\end{equation}
	Thanks to the interpolation inequality and the $L^1$-bound in Lemma \ref{lemma:L1} we have
	\begin{equation}\label{est5}
	\norm{\sumj b_jv_j}_{\LS{\Lam}} \leq \norm{\sumj b_jv_j}_{\LS{1}}^{\beta}\norm{\sumj b_jv_j}_{\LS{\Lam+\vk}}^{1-\beta} \leq C_{T,\beta}\norm{\sumj b_jv_j}_{\LS{\Lam+\vk}}^{1-\beta}
	\end{equation}
	where $\beta\in (0,1)$ satisfies $\frac{1}{\Lam} = \frac{\beta}{1} + \frac{1-\beta}{\Lam+\vk}$. Inserting \eqref{est4} and \eqref{est5} into \eqref{est3}, and applying \eqref{eps_critical} we have for any $\eps >0$
	\begin{equation*}
	\begin{aligned}
	\intMT \bra{\sumj b_jv_j}\psi &\leq C_{T,\eps}\sumi\bra{1+\norm{u_{i,0}}_{\LO{\Lam}}} + C_T\sumj\|v_{j,0}\|_{\LM{\Lam+\vk}} +\eps\norm{\sumj b_jv_j}_{\LS{\Lam+\vk}}\\
	&\quad + \frac{\delta_{\max}-\delta_{\min}}{\delta_{\max}+\delta_{\min}}C_{\mr,(\Lam+\vk)'}^M\norm{\sumj b_jv_j}_{\LM{\Lam+\vk}}+ C_{T,\beta}\norm{\sumj b_jv_j}_{\LM{\Lam+\vk}}^{1-\beta}. 
	\end{aligned}
	\end{equation*}
	By Young's inequality we have
	\begin{equation*}
	C_{T,\beta}\norm{\sumj b_jv_j}_{\LM{\Lam+\vk}}^{1-\beta} \leq C_{T,\eps,\beta} + \eps\norm{\sumj b_jv_j}_{\LS{\Lam+\vk}},
	\end{equation*}
	and consequently,
	\begin{equation*}
	\begin{aligned}
	\intMT\bra{\sumj b_jv_j}\psi\leq C_{T,\eps,\beta} + \bra{2\eps+ \frac{\delta_{\max}-\delta_{\min}}{\delta_{\max}+\delta_{\min}}C_{\mr,(\Lam+\vk)'}^M}\norm{\sumj b_jv_j}_{\LS{\Lam+\vk}}.
	\end{aligned}
	\end{equation*}
	Since $0\leq \psi\in \LS{(\Lam+\vk)'}$ with $\norm{\psi}_{\LS{(\Lam+\vk)'}} = 1$ arbitrary, we obtain by duality
	\begin{equation*}
	\begin{aligned}
	\norm{\sumj b_jv_j}_{\LS{\Lam+\vk}} \leq C_{T,\eps,\beta} + \bra{2\eps+ \frac{\delta_{\max}-\delta_{\min}}{\delta_{\max}+\delta_{\min}}C_{\mr,(\Lam+\vk)'}^M}\norm{\sumj b_jv_j}_{\LS{\Lam+\vk}}.
	\end{aligned}
	\end{equation*}
	Now thanks to Lemma \ref{lem:improved_quasi_uniform} we can choose $\eps$ small enough such that, for all $\vk\in (0,\vk_0)$ verifying \eqref{vk_small},
	\begin{equation*}
	2\eps+ \frac{\delta_{\max}-\delta_{\min}}{\delta_{\max}+\delta_{\min}}C_{\mr,(\Lam+\vk)'}^M < 1,
	\end{equation*}
	and finally obtain
	\begin{equation*}
	\norm{\sumj b_jv_j}_{\LS{\Lam+\vk}} \leq \bra{1-2\eps- \frac{\delta_{\max}-\delta_{\min}}{\delta_{\max}+\delta_{\min}}C_{\mr,(\Lam+\vk)'}^M}^{-1}C_{T,\eps,\beta}.
	\end{equation*}
	Combining this with \eqref{eps_critical} we can finish the proof of Proposition \ref{pro:duality}.
\end{proof}

\section{Proof of main results}\label{sec:4}
\subsection{Theorem \ref{thm:main1}: Global existence}\label{sec:4.1}
\begin{lemma}\label{elementary}
	Let $\{y_j\}_{j=1,\ldots, m_2}$ be a sequence of non-negative numbers. Assume that there is a constant $K>0$ such that, for any $\eps>0$, there exists $C_\eps>0$ independent of $\{y_i\}$ such that if $k\in \{1,\ldots, m_2\}$ then
	\begin{equation*}
		y_k \leq C_\eps+ K\sum_{j=1}^{k-1}y_j + \eps\sumj y_j,
	\end{equation*}
	where if $k=1$, the sum $\sum_{j=1}^{k-1}y_j$ is neglected. Then there exists a constant $C$ independent of the sequence $\{y_j\}$ such that 
	\begin{equation}\label{bound_elementary}
		\sumj y_j \leq C.
	\end{equation}
\end{lemma}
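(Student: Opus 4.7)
The plan is to iterate the assumed inequality along $k=1,2,\dots,m_2$ so that each $y_k$ is controlled by a constant (depending on $K$ and $m_2$) times $C_\eps$ plus a similar constant times $\eps\sumj y_j$, and then choose $\eps$ small enough to absorb the sum on the right-hand side.

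More precisely, I would argue by induction on $k$ that there exists a constant $A_k$ depending only on $K$ and $k$ (in fact, $A_k=(1+K)^{k-1}$) such that
\begin{equation*}
y_k \le A_k\, C_\eps + A_k\,\eps\sumj y_j, \qquad k=1,\ldots,m_2.
\end{equation*}
The base case $k=1$ is immediate from the assumption with $A_1=1$. For the inductive step, one substitutes the bounds for $y_1,\ldots,y_{k-1}$ into the assumed inequality for $y_k$; this yields the same structural bound with $A_k=1+K\sum_{j=1}^{k-1}A_j$, and a direct computation (or telescoping) shows $A_k=(1+K)A_{k-1}$, hence $A_k=(1+K)^{k-1}$.

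Summing over $k=1,\ldots,m_2$ and setting $M:=\sum_{k=1}^{m_2}(1+K)^{k-1}$, which depends only on $K$ and $m_2$, gives
\begin{equation*}
\sumj y_j \;\le\; M\,C_\eps + M\,\eps \sumj y_j.
\end{equation*}
Finally, choose $\eps>0$ small enough so that $M\eps\le\tfrac12$. Since the assumed inequality holds for every $\eps>0$ with some $C_\eps$, this is a legitimate choice, and we obtain $\sumj y_j\le 2MC_\eps=:C$. The constant $C$ depends on $K$ and $m_2$ only, not on the sequence $\{y_j\}$, which is exactly the claim.

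The only mildly delicate point is making sure that the two occurrences of $\eps$ (inside the hypothesis and in the final absorption step) can be taken to be the same: since $m_2$ and $K$ are fixed parameters and the hypothesis provides $C_\eps$ for every $\eps>0$, we simply fix $\eps=1/(2M)$ once and for all and take the corresponding $C_\eps$. No other obstacle is expected; the lemma is essentially a discrete Gronwall-type absorption argument.
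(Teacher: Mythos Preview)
Your proof is correct and follows essentially the same approach as the paper's own proof: both argue by induction on $k$ to obtain bounds of the form $y_k\le B_k + R_k\eps\sumj y_j$ with $R_k$ independent of $\eps$, then sum and choose $\eps$ small enough to absorb. Your version is marginally tidier in that you observe the two coefficient sequences coincide (your $A_k$ plays the role of both the paper's $B_k/C_\eps$ and $R_k$) and you compute the closed form $A_k=(1+K)^{k-1}$, but this is cosmetic.
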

\begin{proof}
	We prove by induction that for each $k\in \{1,\ldots, m_2\}$, there exists a constant $B_k, R_k>0$ independent of $\eps$ such that
	\begin{equation}\label{induction}
		y_k \leq B_k + R_k\eps \sumj y_j.
	\end{equation}
	With $k=1$, \eqref{induction} follows for $B_1 = C_\eps$ and $R_1 = 1$. Assume that this is true for all $j=1,\ldots, k-1$. Then
	\begin{align*}
		y_k \leq C_\eps+K\sum_{j=1}^{k-1}y_j + \eps \sumj y_j \leq C_\eps + K\sum_{j=1}^{k-1}\bra{B_j + R_j\eps\sumj y_j} + \eps\sumj y_j \\
		\leq \underbrace{\sbra{C_\eps+K\sum_{j=1}^{k-1}B_j}}_{=: B_k} + \underbrace{\sbra{K\sum_{j=1}^{k-1}R_j + 1}}_{=:R_k}\eps\sumj y_j
	\end{align*}
	which proves \eqref{induction}. Now summing \eqref{induction} for $k=1,\ldots, m_2$ gives
	\begin{equation*}
		\sum_{k=1}^{m_2}y_k \leq \sum_{k=1}^{m_2}B_k + \bra{\sum_{k=1}^{m_2}R_k}\eps\sumj y_j,
	\end{equation*}
	and consequently, by choosing $\eps = \frac 12\bra{\sum_{k=1}^{m_2}R_k}^{-1}$, we obtain the desired bound \eqref{bound_elementary}.
\end{proof}

\begin{lemma}[\eqref{O}-\eqref{D}-\eqref{F0}-\eqref{F1}-\eqref{F2}-\eqref{eq:intsum1}-\eqref{eq:intsum2}-\eqref{pOpM}-\eqref{mM_general}-\eqref{quasi-uniform}]\label{lem:dual1}
For any $p>\Lam+\vk$, any $\eps>0$ and any $k\in \{1,\ldots, m_2\}$, there exists a constant $C_{T,\eps}$ depending on $T$ and $\eps$ such that 
\begin{equation}\label{inductive_estimate}
	\|v_k\|_{\LS{p}} \leq C_{T,\eps} + C_T\sum_{j=1}^{k-1}\|v_j\|_{\LS{p}} + \eps \sumj \|v_j\|_{\LS{p}}.
\end{equation}
Naturally, when $k = 1$, the first sum on the right-hand side is neglected.
\end{lemma}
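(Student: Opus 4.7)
The plan is to run a duality argument patterned after Proposition~\ref{pro:duality}, but using the $(m_1+k)$-th row of the intermediate-sum matrix $A$ in place of the coefficients $(a_i, b_j)$ from \eqref{F2}. Fix $k\in\{1,\ldots,m_2\}$ and let $0\le\psi\in\LS{p'}$ with $\|\psi\|_{\LS{p'}}=1$. Let $\phi$ solve the dual problem \eqref{dual_eq} with $\delta=\delta_k$; then Lemma~\ref{dual_boundary} gives bounds on $\phi$ in $W^{2,1}_{p'+\xi}(Q_T)$, $W^{2,1}_{p'}(M_T)$, and on $\pa_\eta\phi$ in $\LS{p'+\xi}$, all controlled by $\|\psi\|_{\LS{p'}}=1$, and moreover $\phi\ge 0$. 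Write $\alpha_i=a_{m_1+k,i}$, $\beta_j=a_{m_1+k,m_1+j}$, with $\beta_k=1$.

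Next I would test the $u_i$-equation against $\alpha_i\phi$ on $Q_T$ and the $v_j$-equation ($j=1,\ldots,k$) against $\beta_j\phi$ on $M_T$, and integrate by parts in both time and space, using $\pa_t\phi+\Delta\phi=0$, $d_i\pa_\eta u_i=G_i$, and $\pa_t\phi+\delta_k\Delta_M\phi=-\psi$. Rearranging and isolating the $j=k$ term on the left (whose coefficient is $\beta_k=1$) yields an identity of the form
\[
\intMT v_k\psi \;=\; -\sum_{j=1}^{k-1}\beta_j\!\intMT v_j\psi \;+\; \mathrm{(I)}\;+\;\mathrm{(II)}\;+\;\mathrm{(III)},
\]
where $\mathrm{(I)}$ gathers the initial-data pieces $\intO u_{i,0}\phi(0)$, $\intM v_{j,0}\phi(0)$; $\mathrm{(II)}$ gathers the linear-in-$(u,v)$ dual terms $\intMT u_i\pa_\eta\phi$, $\intQT u_i\Delta\phi$, $\intMT v_j\Delta_M\phi$; and $\mathrm{(III)}$ is the nonlinear contribution $\intQT\phi\sumi\alpha_i F_i(u)+\intMT\phi(\sumi\alpha_iG_i+\sum_{j=1}^k\beta_jH_j)$. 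The sum $\sum_{j<k}\beta_j|\intMT v_j\psi|$ is immediately bounded by $C\sum_{j=1}^{k-1}\|v_j\|_{\LS{p}}$ via H\"older, accounting for the first group of terms on the right-hand side of \eqref{inductive_estimate}.

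The bounded part~$\mathrm{(I)}$ and the linear duality part~$\mathrm{(II)}$ are handled by combining the $\phi$-regularity above with the $L^{\Lambda+\gamma}$-bounds supplied by Proposition~\ref{pro:duality}. Because $p>\Lambda+\vk$ and $\xi=p'/(n+1)>0$, the H\"older exponents pair up so that all these pieces are dominated by $C_T\|\psi\|_{\LS{p'}}=C_T$. For the genuinely nonlinear piece~$\mathrm{(III)}$, the $(m_1+k)$-th row of $A$ in \eqref{eq:intsum1}--\eqref{eq:intsum2} converts it to $\intQT\phi(\sumi u_i^{\pO}+1)+\intMT\phi(\sumi u_i^{\mM}+\sumj v_j^{\mM}+1)$. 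Each such integral I would split via H\"older, reading off the admissible exponent for $\phi$ from the embedding in Lemma~\ref{dual_boundary}; the assumption $\mM\le 1+2\Lambda/(n+1)$ in \eqref{mM_general} is precisely the scaling that makes the companion exponent on $u_i$ or $v_j$ strictly less than $p$. Interpolating between the $\LS{\Lambda+\gamma}$-bound of Proposition~\ref{pro:duality} and the target $\LS{p}$-norm, then using Young's inequality, produces, for each $v_j$-term, a contribution of the form $C_{T,\eps}+\eps\|v_j\|_{\LS{p}}$.

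Finally, the $u_i$-integrals arising from $\mathrm{(III)}$, together with the companion bound $\pO<1+2/n$ from \eqref{pOpM}, are treated in the same manner, giving contributions $C_{T,\eps}+\eps\|u_i\|_{\LQ{p}}+\eps\|u_i\|_{\LS{p}}$; then Lemma~\ref{lemma:Loft} absorbs these into a further $C_{T,\eps}+\eps\sumj\|v_j\|_{\LS{p}}$. Collecting everything and taking the supremum over $\psi$ with $\|\psi\|_{\LS{p'}}=1$ yields \eqref{inductive_estimate}. The principal obstacle I anticipate is pure exponent bookkeeping: checking that one single choice of H\"older triples in the nonlinear step simultaneously respects (a) the embedding threshold for $\phi$ from Lemma~\ref{dual_boundary}, (b) the product exponent $\le p$ on the $u$- and $v$-factors, and (c) the strict inequalities in \eqref{pOpM}--\eqref{mM_general}, so that the small coefficient multiplying $\sumj\|v_j\|_{\LS{p}}$ really is $\eps$ and the remainder $C_{T,\eps}$ stays finite.
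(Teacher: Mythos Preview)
Your overall strategy is essentially the paper's: test the dual problem with $\delta=\delta_k$, use the $(m_1+k)$-th row of $A$ to replace the nonlinear terms, and estimate the residual via H\"older splittings calibrated by the embeddings of Lemma~\ref{dual_boundary}, followed by an appeal to Lemma~\ref{lemma:Loft} to convert $u_i$-norms into $\eps\sum_j\|v_j\|_{\LS{p}}$. The organization differs slightly (you sum all tested equations first, while the paper starts from $\intMT v_k\psi$ and inserts the intermediate-sum inequality into $H_k$), but the algebra is the same.

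There is, however, one genuine slip in your bookkeeping. You assert that the linear duality piece~(II), namely $\intQT u_i\Delta\phi$, $\intMT u_i\,\pa_\eta\phi$, and $\intMT v_j\Delta_M\phi$ for $j<k$, is dominated by $C_T$ using the $L^{\Lambda+\gamma}$-bounds of Proposition~\ref{pro:duality}. This fails for large $p$: from Lemma~\ref{dual_boundary} one has $\|\pa_\eta\phi\|_{\LS{p'+\xi}}\le C_T$ with $\xi=p'/(n+1)$, so the companion H\"older exponent on $u_i$ is $(p'+\xi)'$, which tends to $n+2$ as $p\to\infty$ and in general exceeds $\Lambda+\gamma$. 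The same issue occurs for $\intQT u_i\Delta\phi$. The paper therefore does \emph{not} bound these pieces by $C_T$; it keeps them as $C_T\|u_i\|_{\LQ{p}}$ and $C_T\|u_i\|_{\LS{p}}$ (see the estimates of $(B13)$, $(B14)$), and likewise the $v_j\Delta_M\phi$ pieces for $j<k$ contribute $C_T\|v_j\|_{\LS{p}}$ to the middle sum in \eqref{inductive_estimate} rather than $C_T$ (see $(B2)$). The fix is exactly the mechanism you already use for~(III): route the $u_i$-contributions through Lemma~\ref{lemma:Loft}, which converts $C_T\sum_i(\|u_i\|_{\LQ{p}}+\|u_i\|_{\LS{p}})$ into $C_{T,\eps}+C_T\eps\sum_j\|v_j\|_{\LS{p}}$, and put the $v_j$-contributions for $j<k$ into the $C_T\sum_{j=1}^{k-1}\|v_j\|_{\LS{p}}$ term. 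Once you make this correction, your argument matches the paper's.
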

\begin{proof}
	It's enough to show \eqref{inductive_estimate} for $p$ large enough. Let $0\le \psi \in \LS{p'}$ with $\|\psi\|_{\LS{p'}} = 1$, and $\phi$ be the solution to \eqref{dual_eq} with $\delta = \delta_k$. Recall that from the assumption  \eqref{eq:intsum2} we have for any $k = 1,\ldots, m_2$
	\begin{equation*}
		\sumi a_{(k+m_1)i}G_i(u,v) + \sum_{j=1}^k a_{(k+m_1)(j+m_1)}H_j(u,v) \leq L_2\sbra{\sumi u_i^{\mM} + \sumj v_j^{\mM} + 1}
	\end{equation*}
	which implies, recalling $a_{kk} = 1$ for all $k=1,\ldots, m_1+m_2$,
	\begin{equation}\label{a1}
	\begin{aligned}
		 H_k(u,v)
		& \leq -\sumi a_{(k+m_1)i}G_i(u,v) - \sum_{j=1}^{k-1} a_{(k+m_1)(j+m_1)}H_j(u,v)\\
		&\quad + L_2\sbra{\sumi u_i^{\mM} + \sumj v_j^{\mM} + 1}.
	\end{aligned}
	\end{equation}
	By integration by parts we have
	\begin{equation}\label{a2}
	\begin{aligned}
		\intMT v_k\psi &= -\intMT v_k(\partial_t \phi + \delta_k \Delta_M \phi)\\
		&= \intM v_{k,0}\phi(0) + \intMT \phi (\partial_t v_k - \delta_k \Delta_Mv_k)\\
		&= \intM v_{k,0}\phi(0) + \intMT \phi H_k(u,v)\\
		&=: (A) + (B).
	\end{aligned}
	\end{equation}
	From Lemma \ref{dual_boundary} we have
	\begin{equation}\label{A}
		|(A)| \leq C\|v_{k,0}\|_{\LM{p}}\|\phi(0)\|_{\LM{p'}} \leq C\|v_{k,0}\|_{\LM{p}}.
	\end{equation}
	To estimate $(B)$ we use $\phi\geq 0$ and \eqref{a1} to have
	\begin{equation}\label{B}
	\begin{aligned}
		(B) &\leq -\sumi \intMT a_{(k+m_1)i}G_i(u,v)\phi - \sum_{j=1}^{k-1}\intMT a_{(k+m_1)(j+m_1)}H_j(u,v)\phi\\
		&\quad + L_2\intMT \phi\sbra{\sumi u_i^{\mM} + \sumj v_j^{\mM} + 1 }\\
		&=: (B1) + (B2) + (B3).
	\end{aligned}
	\end{equation}
	
	\medskip
	\underline{Estimate of $(B1)$}. From the equation \eqref{eq:mainsys} we have
	\begin{align*}
%	\begin{aligned}
		(B1) & =  -\sumi \intMT a_{(k+m_1)i}G_i(u,v)\phi\\
		& = -\sumi \intMT a_{(k+m_1)i}(d_i\partial_\eta u_i) \phi\\
		&= -\sumi a_{(k+m_1)i}\sbra{\intQT d_i\Delta u_i \phi + \intMT d_iu_i\partial_{\eta}\phi - \intQT d_iu_i\Delta \phi}\\
		&= -\sumi a_{(k+m_1)i}\sbra{\intQT \sbra{\partial_t u_i - F_i(u)} \phi + \intMT d_iu_i\partial_{\eta}\phi - \intQT d_iu_i\Delta \phi}\\
		&= \sumi a_{(k+m_1)i}\intO u_{i,0}\phi(0) + \intQT \phi\sbra{\sumi a_{(k+m_1)i}F_i(u)}\\
		&\quad + \sumi a_{(k+m_1)i}d_i\intMT u_i\partial_{\eta}\phi + \sumi a_{(k+m_1)i}\intQT u_i\sbra{\partial_t \phi + d_i\Delta \phi}\\
		&=: (B11) + (B12) + (B13) + (B14).
%	\end{aligned}
	\end{align*}
	By using Lemma \ref{dual_boundary} we have thanks to H\"older's inequality
	\begin{equation}\label{B11}
		|(B11)| \leq C\sumi\|u_{i,0}\|_{\LO{p}}\|\phi(0)\|_{\LO{p'}} \leq C_T\sumi\|u_{i,0}\|_{\LO{p}}.
	\end{equation}
	From \eqref{eq:intsum1} we have
	\begin{equation*}
		\sumi a_{(k+m_1)i}F_i(u) \leq L_2\sbra{\sumi u_i^{\pO} + 1}.
	\end{equation*}
	Therefore, by H\"older's inequality,
	\begin{equation}\label{B12r}
	\begin{aligned}
		|(B12)| \leq L_2\intQT \phi \sbra{\sumi u_i^{\pO} + 1} \leq L_2\sumi \intQT \phi u_i^{\pO} + C_T\|\phi\|_{\LQ{p'}}.
	\end{aligned}
	\end{equation}
	From Lemma \ref{dual_boundary} we have
	\begin{equation}\label{ff3}
		\|\phi\|_{\LQ{(p')^{\dag}}} \leq C_T
	\end{equation}
	with $(p')^{\dag}$ defined similarly to $q^{\dag}$ in \eqref{crit_exp}. For any $\beta\in (0,1)$, it follows from H\"older's inequality that
	\begin{equation}\label{c5}
		\intQT \phi u_i^{\pO} \leq \bra{\intQT \phi ^{(p')^{\dag}}}^{\frac{1}{(p')^\dag}}\bra{\intQT u_i^{(p_\Omega-\beta)s}}^{\frac 1s}\bra{\intQT u_i^p}^{\frac{\beta}{p}}
	\end{equation}
	where
	\begin{equation}\label{Sigma}
		\frac{1}{(p')^\dag} + \frac 1s + \frac{\beta}{p} = 1.
	\end{equation}
	This implies
	\begin{equation*}
		s = \frac{(n+2)p}{(n+2)(p-\beta) + 2p - (n+1)(p-1)}.
	\end{equation*}
	Therefore, from \eqref{pOpM} and the fact that $\Lam\geq 2$, we can always choose $\beta\in (0,1)$ and $p$ large enough such that 
	\begin{equation*}
		(\pO - \beta)s < \Lam+\vk,
	\end{equation*}
	and consequently
	\begin{equation*}
		\|u_i\|_{\LQ{(\pO-\beta)s}} \leq C_T\|u_i\|_{\LQ{\Lam+\vk}} \leq C_T.
	\end{equation*}
	Thus it follows from \eqref{c5} and \eqref{ff3} that
	\begin{equation}\label{c6}
		\intQT \phi u_i^{\pO} \leq \|\phi\|_{\LQ{(p')^\dag}}\|u_i\|_{\LQ{(p_\Omega-\beta)s}}^{\pO-\beta}\|u_i\|_{\LQ{p}}^{\beta} \leq C_T\|u_i\|_{\LQ{p}}^{\beta}.
	\end{equation}
	Therefore, from \eqref{B12r} we get the estimate for $(B12)$,
	\begin{equation}\label{B12}
		|(B12)| \leq C_{T} + \sumi\|u_i\|_{\LQ{p}}^{\beta} \leq C_T + \sumi \|u_i\|_{\LQ{p}},
	\end{equation}
	since $\beta\in (0,1)$. Due to H\"older's inequality, Lemmas \ref{lemma:Loft} and \ref{dual_boundary} we have
	\begin{equation}\label{B13}
	\begin{aligned}
		|(B13)| \leq C\sumi\|u_i\|_{\LS{p}}\|\partial_{\eta}\phi\|_{\LS{p'}}\leq C_T\sumi\|u_i\|_{\LS{p}}.
	\end{aligned}
	\end{equation}
	Finally, since $\partial_t \phi + \Delta \phi = 0$ in $Q_T$ we estimate $(B14)$ as
	\begin{equation}\label{B14}
	\begin{aligned}
		|(B14)| \leq \sumi a_{(k+m_1)i}|d_i -1|\intQT |u_i||\Delta \phi| &\leq C\sumi \|u_i\|_{\LQ{p}}\|\Delta\phi\|_{\LQ{p'}}\\
		& \leq C\sumi \|u_i\|_{\LQ{p}}.
	\end{aligned}
	\end{equation}
	From \eqref{B11}, \eqref{B12}, \eqref{B13}, and \eqref{B14} we obtain the estimate for $(B1)$ as
	\begin{equation}\label{B1}
		|(B1)| \leq C_{T} + \sumi\bra{\|u_i\|_{\LQ{p}}+\|u_i\|_{\LS{p}}}.
	\end{equation}
	
	\medskip
	\underline{Estimate of $(B2)$}. Since $\partial_tv_j - \delta_j\Delta_Mv_j = H_j(u,v)$, we have
	\begin{equation}\label{B2}
	\begin{aligned}
		|(B2)| &= \abs{\sum_{j=1}^{k-1}a_{(k+m_1)(j+m_1)}\intMT(\partial_tv_j - \delta_j\Delta_Mv_j)\phi}\\
		&= \abs{\sum_{j=1}^{k-1}a_{(k+m_1)(j+m_1)}\sbra{\intM v_{j,0}\phi(0) - \intMT v_j(\partial_t\phi + \delta_j \Delta_M\phi)}}\\
		&\leq C\sum_{j=1}^{k-1}\biggl[\|v_{j,0}\|_{\LM{p}}\|\phi(0)\|_{\LM{p'}}\\
		&\qquad\qquad\quad  + \|v_j\|_{\LS{p}}\bra{|\delta_j-\delta_k|\|\Delta_M\phi\|_{\LS{p'}} + \|\psi\|_{\LS{p'}}}\biggr]\\
		&\leq C\sum_{j=1}^{k-1}\bra{\|v_{j,0}\|_{\LM{p}} + \|v_j\|_{\LS{p}}}
	\end{aligned}
	\end{equation}	
	where we used Lemma \ref{dual_boundary} at the last step.
	
	\medskip
	\underline{Estimate of $(B3)$}. 
	We split $(B3)$ into three parts, as
	\begin{equation*}
		(B3) = L_2\sumi \intMT \phi u_i^{\mM} + L_2\sumj \intMT \phi v_j^{\mM} + L_2\intMT \phi =: (B31) + (B32) + (B33).
	\end{equation*}
	The term $(B33)$ can be estimated directly as
	\begin{equation}\label{B33}
		|(B33)| \leq C_T\|\phi\|_{\LS{p'}} \leq C_T.
	\end{equation}	
	For any $\alpha \in (0,1)$ we use H\"older's inequality to estimate
	\begin{equation}\label{ff1}
	\begin{aligned}
		\intMT \phi u_i^{\mM} = \intMT \phi u_i^{\mM-\alpha}u_i^{\alpha}\leq \bra{\intMT \phi^{(p')^*}}^{\frac{1}{(p')^*}}\bra{\intMT u_i^{(\mM-\alpha)r}}^{\frac{1}{r}}\bra{\intMT u_i^{p}}^{\frac{\alpha}{p}}
	\end{aligned}
	\end{equation}
	where $(p')^*$ is defined similarly as $q^*$ in \eqref{crit_exp}, and
	\begin{equation}\label{ff11}
		\frac{1}{(p')^*} + \frac{1}{r} + \frac{\alpha}{p} = 1.
	\end{equation}
	It follows that
	\begin{equation}\label{ff12}
		r = \frac{(n+1)p}{(n+1)(1-\alpha)+2p}.
	\end{equation}
	We now choose $\alpha \in (0,1)$ such that
	\begin{equation}\label{alpha}
	1-\alpha < \frac{2\vk}{(n+1)\sbra{1-\frac{\Lam+\vk}{p}}}.
	\end{equation}
	Combining this with \eqref{mM_general} gives
	\begin{equation*}
	\mM - \alpha < \frac{(\Lam+\vk)\sbra{(n+1)(1-\alpha) + 2p}}{(n+1)p},
	\end{equation*}
	and consequently
	\begin{equation*}
	(\mM - \alpha)r < \Lam+\vk.
	\end{equation*}
	Therefore, it follows from \eqref{ff1} that
	\begin{equation*}
		\intMT \phi u_i^{\mM} \leq \|\phi\|_{\LS{(p')^*}}\|u_i\|_{\LS{(\mM-\alpha)r}}^{\mM-\alpha}\|u_i\|_{\LS{p}}^{\alpha} \leq C_T\|u_i\|_{\LS{p}}^{\alpha}
	\end{equation*}
	thanks to \eqref{Lq^*} and $\|u_i\|_{\LS{(\mM-\alpha)r}} \leq C_T\|u_i\|_{\LS{\Lam+\vk}} \leq C_T$. Therefore, we have
	\begin{equation}\label{B31}
		|(B31)| \leq C_T\sumi\|u_i\|_{\LS{p}}^{\alpha} \leq C_T + \sumi \|u_i\|_{\LS{p}}.
	\end{equation}
	In the same way we can estimate
	\begin{equation}\label{B32}
		|(B32)| \leq C_T\sumj\|v_j\|_{\LS{p}}^{\alpha}.
	\end{equation}
	From \eqref{B31}, \eqref{B33}, and \eqref{B32} we obtain
	\begin{equation}\label{B3}
		|(B3)| \leq C_{T} + \sumi \|u_i\|_{\LS{p}}+ \sumj\|v_j\|_{\LS{p}}^{\alpha},
	\end{equation}
	with $\alpha \in (0,1)$ satisfying \eqref{alpha}.
	
	\medskip
	From the estimates of $(B1), (B2), (B3)$ in \eqref{B1}, \eqref{B2}, \eqref{B3}, we get
	\begin{equation*}
		|(B)| \leq C_T + C_T\bra{\sum_{j=1}^{k-1}\|v_j\|_{\LS{p}} +  \sumj\|v_j\|_{\LS{p}}^{\alpha} + \sumi\bra{\|u_i\|_{\LQ{p}} + \|u_i\|_{\LS{p}}}}.
	\end{equation*}
	By using estimate \eqref{ff4} and Young's inequality, we find
	\begin{equation*}
	|(B)| \leq C_T + C_T\bra{\sum_{j=1}^{k-1}\|v_j\|_{\LS{p}} +  \eps\sumj\|v_j\|_{\LS{p}}}.
	\end{equation*}
	Together with \eqref{A}, we get finally from \eqref{a2},
	\begin{equation*}
	\intMT v_k\psi \leq C_{T} + C_T\bra{\sum_{j=1}^{k-1}\|v_j\|_{\LS{p}} +  \eps\sumj\|v_j\|_{\LS{p}}},
	\end{equation*}
	which yields the desired estimate \eqref{inductive_estimate} thanks to duality.
\end{proof}

By combining Lemmas \ref{lemma:Loft}, \ref{elementary}, and \ref{lem:dual1} we get
\begin{lemma}[\eqref{O}-\eqref{D}-\eqref{F0}-\eqref{F1}-\eqref{F2}-\eqref{eq:intsum1}-\eqref{eq:intsum2}-\eqref{pOpM}-\eqref{mM_general}-\eqref{quasi-uniform}]\label{lem1}
	For any $2\leq p \in \mathbb Z$, there exists a constant $C_{T,p}$ such that
	\begin{equation*}
		\sumi\bra{\|u_i\|_{\LQ{p}} + \|u_i\|_{\LS{p}}} + \sumj\|v_j\|_{\LS{p}} \leq C_{T,p}.
	\end{equation*}
\end{lemma}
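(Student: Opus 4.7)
The plan is to chain together the three previous ingredients: the $L^p$-energy estimate \eqref{ff4} from Lemma \ref{lemma:Loft}, the duality-based inductive estimate \eqref{inductive_estimate} from Lemma \ref{lem:dual1}, and the abstract iteration Lemma \ref{elementary}. The $L^p$-energy estimate controls the volume concentrations $u_i$ in $L^p(Q_T)$ and $L^p(M_T)$ in terms of the surface concentrations $v_j$ in $L^p(M_T)$ (up to an arbitrarily small constant times a sum of the $v_j$-norms), while the duality estimate gives an inductive control of each $v_k$ in terms of earlier $v_j$'s plus small portions of all of them. Applying Lemma \ref{elementary} then closes the argument for the surface components, and the $u_i$ bounds follow automatically.

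More precisely, I would first reduce to the case $p > \Lambda+\varkappa$, since for $1<p\le \Lambda+\varkappa$ the conclusion is already covered by Proposition \ref{pro:duality} (or by Lemma \ref{lemma:L1} together with interpolation if $p$ is very small). So fix an integer $p > \Lambda+\varkappa$. For any $\eps>0$, Lemma \ref{lem:dual1} gives a constant $C_{T,\eps}$ (depending on $p$ as well) such that for every $k \in \{1,\dots,m_2\}$,
\begin{equation*}
\|v_k\|_{\LS{p}} \leq C_{T,\eps} + C_T\sum_{j=1}^{k-1}\|v_j\|_{\LS{p}} + \eps\sumj\|v_j\|_{\LS{p}}.
\end{equation*}
Setting $y_k := \|v_k\|_{\LS{p}}$ and $K := C_T$, this is exactly the hypothesis of Lemma \ref{elementary}. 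Therefore
\begin{equation*}
\sumj \|v_j\|_{\LS{p}} \leq C_{T,p}
\end{equation*}
for some constant $C_{T,p}$ independent of $\eps$.

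Once this surface bound is in hand, I would plug it into \eqref{ff4} of Lemma \ref{lemma:Loft}, choosing for instance $\eps = 1$, to obtain
\begin{equation*}
\sumi\bra{\|u_i\|_{\LQ{p}} + \|u_i\|_{\LS{p}}} \leq K_{p,1}(1+T) + \sumj\|v_j\|_{\LS{p}} \leq C_{T,p},
\end{equation*}
which combined with the previous estimate yields the claimed bound. I do not anticipate a genuine obstacle here: the real work has already been done in Lemmas \ref{lemma:Loft} and \ref{lem:dual1}. The only delicate point is making sure one picks the $\eps$ used to absorb $v_j$-norms in the duality argument small enough (in the application of Lemma \ref{elementary}) before using a different $\eps$ in \eqref{ff4} to close the $u_i$ estimate; doing these absorptions in the correct order is a bookkeeping matter rather than a substantive difficulty.
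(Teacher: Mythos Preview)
Your proposal is correct and follows essentially the same approach as the paper's own proof: apply Lemma \ref{lem:dual1} together with Lemma \ref{elementary} to bound $\sumj\|v_j\|_{\LS{p}}$, and then feed this into \eqref{ff4} of Lemma \ref{lemma:Loft} to control the $u_i$ terms. Your explicit reduction to $p>\Lambda+\varkappa$ and the remark about the order of choosing $\eps$ are harmless elaborations; the paper simply leaves these implicit.
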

\begin{proof}
	From Lemmas \ref{lem:dual1} and \ref{elementary} we obtain
	\begin{equation*}
		\sumj\|v_j\|_{\LS{p}} \leq C_{T,p}.
	\end{equation*}
	This and Lemma \ref{lemma:Loft} imply the desired estimate.
\end{proof}
As a consequence, we obtain the global existence of \eqref{eq:mainsys} in Theorem \ref{thm:main1} part (i).
\begin{theorem}[\eqref{O}-\eqref{D}-\eqref{F0}-\eqref{F1}-\eqref{F2}-\eqref{F3}-\eqref{eq:intsum1}-\eqref{eq:intsum2}-\eqref{pOpM}-\eqref{mM_general}-\eqref{quasi-uniform}]\label{thm2}
	Let $p>n$. 
	For any non-negative initial data $(u_0,v_0) \in (W^{2-2/p}(\Omega))^{m_1}\times (W^{2-2/p}(M))^{m_2}$ satisfying the compatibility condition \eqref{compatibility}, the system \eqref{eq:mainsys} has a unique global strong solution in all dimensions.
\end{theorem}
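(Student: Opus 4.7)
By Theorem~\ref{thm:local}, the system \eqref{eq:mainsys} admits a unique local classical solution on a maximal interval $(0,T_{\max})$, and in view of the blow-up criterion \eqref{blowup} it suffices to exhibit, for every finite $T>0$ with $T<T_{\max}$, a bound
\begin{equation*}
\sumi\|u_i\|_{\LQ{\infty}} + \sumj\|v_j\|_{\LS{\infty}} \leq C_T.
\end{equation*}
The starting point is Lemma~\ref{lem1}, which furnishes $\sumi\bra{\|u_i\|_{\LQ{p}}+\|u_i\|_{\LS{p}}}+\sumj\|v_j\|_{\LS{p}}\leq C_{T,p}$ for every $p\geq 2$. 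Combined with the polynomial growth \eqref{F3}, this already yields $\|F_i(u)\|_{\LQ{q}}$, $\|G_i(u,v)\|_{\LS{q}}$ and $\|H_j(u,v)\|_{\LS{q}}$ bounded by $C_{T,q}$ for every finite $q$, upon choosing $p\geq qr$ in Lemma~\ref{lem1}.

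The next step is to upgrade $v_j$ to $\LS{\infty}$. Since $v_j$ satisfies $\pa_t v_j-\delta_j\Delta_M v_j=H_j(u,v)$ on the closed compact manifold $M$ with smooth initial data $v_{j,0}$, the maximal regularity estimate of Lemma~\ref{mr_constant} (after subtracting the homogeneous contribution carried by the smooth initial data, which is handled by standard semigroup smoothing on $M$) gives $v_j\in W^{2,1}_q(M_T)$ for every $q<\infty$. Since $M_T$ has parabolic dimension $n+1$, the embedding $W^{2,1}_q(M_T)\hookrightarrow \LS{\infty}$ for $q>(n+1)/2$ then produces the required sup-norm bound on each $v_j$.

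With $v_j\in \LS{\infty}$ and $u_i\in \LS{p}$ for all $p$ in hand, \eqref{F3} now gives $G_i(u,v)\in \LS{q}$ for every $q<\infty$. It remains to treat the bulk Neumann problem
\begin{equation*}
\pa_t u_i - d_i\Delta u_i = F_i(u)\in \LQ{q},\qquad d_i\pa_\eta u_i = G_i(u,v)\in \LS{q},
\end{equation*}
with smooth initial data $u_{i,0}$. Applying the $L^q$ maximal regularity for the heat operator with inhomogeneous Neumann boundary data (the framework already used in the proof of Lemma~\ref{dual_boundary}, see also \cite{LSU68}) one obtains $u_i\in W^{2,1}_q(Q_T)$ for $q$ sufficiently large, whence $u_i\in \LQ{\infty}$ by Sobolev embedding. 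Together with the sup-norm bound on $v_j$, this contradicts \eqref{blowup}, so $T_{\max}=+\infty$; uniqueness is already part of Theorem~\ref{thm:local}.

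The main technical obstacle is precisely the last step, because the standard $W^{2,1}_q$ maximal regularity with inhomogeneous Neumann data asks the flux $G_i(u,v)$ to lie in an anisotropic Besov trace space rather than merely $\LS{q}$. This is overcome by a short bootstrap: each time $u_i$ gains integrability or trace regularity on $M$, the polynomial growth \eqref{F3}, together with the already-established bound $v_j\in\LS{\infty}$, transfers the improvement to $G_i(u,v)$; iterating finitely many times produces the Besov-type regularity of $G_i(u,v)$ needed to close the $W^{2,1}_q$ estimate on $u_i$.
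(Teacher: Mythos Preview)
Your argument follows the same overall skeleton as the paper's: invoke local existence and the blow-up criterion, use Lemma~\ref{lem1} together with the polynomial growth \eqref{F3} to place all nonlinearities in every $L^q$, then bootstrap to $L^\infty$. The treatment of $v_j$ via maximal regularity on the closed manifold $M$ and parabolic Sobolev embedding is exactly what the paper has in mind when it says ``Similarly, $\|v_j\|_{\LS{\infty}}$ is bounded.''

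The only real divergence is in the final step for $u_i$. You route through $W^{2,1}_q$ maximal regularity, which forces you to confront the Besov trace requirement on the Neumann datum $G_i(u,v)$, and then you sketch a bootstrap to manufacture that fractional regularity. This detour is both unnecessary and not obviously closable as written: composing a locally Lipschitz $G_i$ with $u$ and $v$ does not automatically transfer fractional Sobolev regularity unless you already control $u$ in $L^\infty$ (which is precisely what you are trying to prove), so each bootstrap step is circular without further input. The paper sidesteps this entirely by citing \cite[Proposition 3.1]{nittka2014inhomogeneous}, a regularizing result for the inhomogeneous Neumann problem that delivers $\|u_i\|_{\LQ{\infty}}$ directly from $F_i(u)\in\LQ{p}$ and $G_i(u,v)\in\LS{p}$ for $p$ large, with no Besov trace hypothesis. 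Replacing your final paragraph with that citation (or an equivalent $L^p$--$L^\infty$ smoothing estimate for the Neumann heat semigroup) makes the proof complete and matches the paper's argument.
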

\begin{proof}
	From Lemma \ref{lem1} and \eqref{F3}, the nonlinearities $F_i(u)$ are bounded in $\LQ{p}$, and $G_i(u,v), H_j(u,v)$ are bounded in $\LS{p}$, for all $p\geq 1$. It follows that 
	\begin{equation*}
		\begin{cases}
			\pa_t u_i - d_i\Delta u_i = F_i(u)\in \LQ{p},\\
			d_i\pa_{\eta}u_i = G_i(u,v)\in \LS{p},
		\end{cases}
	\end{equation*}
	for any $p\geq 1$. By the regularizing effect of linear parabolic equations with inhomogeneous boundary conditions, cf. \cite[Proposition 3.1]{nittka2014inhomogeneous}, it follows that $\|u_i\|_{\LQ{\infty}}$ is bounded. Similarly, $\|v_j\|_{\LS{\infty}}$ is bounded.	This implies that the solution is bounded in $L^{\infty}$, which implies the desired global existence.
\end{proof}

\subsection{Theorem \ref{thm:main1}: Uniform-in-time bounds}\label{sec:uniform-in-time}
To complete the proof of Theorem \ref{main:thm0}, it remains to show the uniform-in-time bound of the solution. To this end, we study the system \eqref{eq:mainsys} on each cylinder $Q_{\tau,\tau+1} = \Omega\times(\tau,\tau+1)$, $\tau\in \mathbb N$.

\medskip
For the rest of this section, {\it all constants are independent of $\tau$} unless otherwise stated.	We also consider an increasing, smooth function $\varphi\in C^\infty(\R;[0,1])$ such that $\varphi(s) = 0$ for $s\in (-\infty,0]$, $\varphi(s) = 1$ for $s\geq 1$, and its shifted version $\varphi_\tau(\cdot) = \varphi(\cdot - \tau)$. By multiplying the system \eqref{eq:mainsys} by $\vat$ we have the truncated system
\begin{equation}\label{shifted_sys}
\begin{cases}
	\pa_t(\vat u_i) = d_i\Delta(\vat u_i) + \vat' u_i + \vat F_i(u), &(x,t)\in Q_{\tau,\tau+2},\; i=1,\ldots, m_1,\\
	d_i\pa_\eta(\vat u_i) = \vat G_i(u,v), &(x,t)\in M_{\tau,\tau+2},\; i=1,\ldots, m_1,\\
	\pa_t(\vat v_j) = \delta_j\Delta_M(\vat v_j) + \vat' v_j + \vat H_j(u,v), &(x,t)\in M_{\tau,\tau+2}, \; j = 1,\ldots, m_2,
\end{cases}
\end{equation}
with {\it zero initial data}
\begin{equation}\label{zero_initial}
\begin{cases}
	(\vat u_i)(x,\tau) = 0, & x\in\Omega,\; i=1,\ldots, m_1,\\
	(\vat v_j)(x,\tau) = 0, & x\in M, \; j=1,\ldots, m_2.
\end{cases}
\end{equation}

\begin{lemma}[\eqref{O}-\eqref{D}-\eqref{F0}-\eqref{F1}-\eqref{F2}]\label{L1bound_cylinder}
	If $L< 0$ or $L=K=0$ in \eqref{F2}, then 
	\begin{equation*}
		\sup_{t\ge 0}\sumi\|u_i(t)\|_{\LO{1}} + \sup_{t\ge 0}\sumj\|v_j(t)\|_{\LM{1}} + \sup_{\tau\in\mathbb N}\sumi\|u_i\|_{\LStau{1}} \leq C.
	\end{equation*}
\end{lemma}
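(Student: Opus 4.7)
My plan is to prove the three claimed bounds in two stages: first a uniform-in-time bound on the $L^1$ masses of $u_i$ and $v_j$, then a uniform-in-$\tau$ bound on the surface trace over each unit cylinder $M_{\tau,\tau+1}$, obtained by rerunning the duality argument from Lemma \ref{lemma:L1} but localized to $[\tau,\tau+1]$ with a $\tau$-independent test function.

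\textbf{Stage 1: mass bound.} First I would integrate the equations \eqref{eq:mainsys} in space, using the boundary condition $d_i\pa_\eta u_i = G_i(u,v)$ and the fact that $\intM \Delta_M w = 0$ on the closed manifold $M$, to get
\begin{equation*}
\frac{d}{dt}m(t) = \intO \sumi a_iF_i(u) + \intM\sbra{\sumi a_iG_i(u,v) + \sumj b_jH_j(u,v)},
\end{equation*}
where $m(t):=\sumi a_i\intO u_i(t) + \sumj b_j\intM v_j(t)$. Applying \eqref{F2} yields $m'(t)\leq L\bra{\intO\sumi u_i + \intM\sumi u_i + \intM\sumj v_j} + K(|\Omega|+|M|)$. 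If $L=K=0$ then $m'\leq 0$, so $m$ is nonincreasing. If $L<0$, then $L\intM\sumi u_i\leq 0$ can be dropped, while the remaining two negative terms are bounded above by $-\lambda m(t)$ with $\lambda := \min\{|L|/\max_i a_i,\, |L|/\max_j b_j\}>0$, so Gronwall gives $m(t)\leq m(0) + K(|\Omega|+|M|)/\lambda$. In both cases $\sup_{t\geq 0}m(t)<\infty$, which gives the first two terms in the statement.

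\textbf{Stage 2: localized duality.} For the trace bound I would reuse the construction in the proof of Lemma \ref{lemma:L1}. Fix $K>0$ large (to be chosen in Stage 3) and a smooth nonnegative $\phi_0$ on $\overline\Omega$ with $\pa_\eta\phi_0 = 1+K\phi_0$, and for each $\tau\in\mathbb N$ let $\phi^{(\tau)}$ solve the backward Robin problem $\pa_t\phi + \Delta\phi = 0$ on $Q_{\tau,\tau+1}$, $\pa_\eta\phi = 1+K\phi$ on $M_{\tau,\tau+1}$, $\phi(\cdot,\tau+1)=\phi_0$. Because this problem is autonomous, has unit time horizon, and has data $\phi_0$ independent of $\tau$, time translation reduces it to a single parabolic problem; the maximum principle and standard parabolic regularity then yield a $\tau$-independent bound for $\phi^{(\tau)}$ in $C^{2,1}(\overline\Omega\times[\tau,\tau+1])$ and for $\theta^{(\tau)}:=-\pa_t\phi^{(\tau)}-\Delta_M\phi^{(\tau)}$ in $L^\infty(M_{\tau,\tau+1})$.

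\textbf{Stage 3: integration by parts and conclusion.} Testing the $i$-th bulk equation against $a_i\phi^{(\tau)}$ on $Q_{\tau,\tau+1}$ and the $j$-th surface equation against $b_j\phi^{(\tau)}$ on $M_{\tau,\tau+1}$, and performing exactly the integration-by-parts calculation of Lemma \ref{lemma:L1} (with the initial time $0$ replaced by $\tau$ and the final time $t$ replaced by $\tau+1$), one obtains after summing in $i,j$ and applying \eqref{F2}
\begin{equation*}
\sumi \int_\tau^{\tau+1}\intM a_id_iu_i(1+K\phi^{(\tau)}) \leq C\bra{\sumi \int_\tau^{\tau+1}\intO u_i + \sumj \int_\tau^{\tau+1}\intM v_j} + C\bra{\sumi\|u_i(\tau)\|_{\LO{1}} + \sumj\|v_j(\tau)\|_{\LM{1}}} + C,
\end{equation*}
where $K$ is chosen so that $Ka_id_i\geq La_i$ for each $i$, allowing the linear-in-$u$ contribution from the second inequality of \eqref{eq:balsysmain} to be absorbed into the left-hand side. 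By Stage 1, every quantity on the right is bounded uniformly in $\tau$, yielding $\sup_{\tau\in\mathbb N}\sumi\|u_i\|_{\LStau{1}}<\infty$ and completing the proof.

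\textbf{Expected obstacle.} The delicate point is the uniformity in $\tau$ of the test-function bounds in Stage 2: all constants controlling $\phi^{(\tau)}$ and $\theta^{(\tau)}$ must be independent of $\tau$. This is precisely why I fix $\phi_0$ once and for all and work with the unit horizon $[\tau,\tau+1]$, so that the dual problems for different $\tau$ are simply time-translates of one another; without this setup the constants would degenerate. Once Stage 2 is secured, the remaining computation is a routine localization of Lemma \ref{lemma:L1}.
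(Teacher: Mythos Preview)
Your proof is correct and takes a route that is close in spirit to, but organizationally different from, the paper's own argument. Both proofs establish Stage~1 in the same way. For Stage~2--3, the paper works on the interval $[\tau,\tau+2]$ with the \emph{truncated} functions $\varphi_\tau u_i,\varphi_\tau v_j$ (which vanish at $t=\tau$) and a dual test function satisfying the simple Neumann condition $\partial_\eta\phi=1$ with zero terminal data $\phi(\cdot,\tau+2)=0$; the cutoff $\varphi_\tau$ absorbs the initial-time contributions, and the bound on $\|u_i\|_{L^1(M_{\tau+1,\tau+2})}$ is read off from the region where $\varphi_\tau\equiv 1$. You instead localize the argument of Lemma~\ref{lemma:L1} directly to $[\tau,\tau+1]$, keep the Robin condition $\partial_\eta\phi=1+K\phi$ and nonzero terminal data $\phi_0$, and handle the initial-time contributions at $t=\tau$ using the Stage~1 mass bound. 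Your observation that the dual problems for different $\tau$ are time-translates of a single problem is exactly what secures the $\tau$-uniformity, and replaces the role of the cutoff.

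Each approach has a small advantage: yours is self-contained and avoids introducing the truncated system for this lemma, while the paper's choice of test function is slightly simpler (since $L\le 0$ here, the Robin term $K\phi$ is not actually needed to absorb the $L\int_M\phi u_i$ contribution---that term is already nonpositive and can be dropped; your choice of large $K$ is harmless but redundant in this setting). The paper's use of $\varphi_\tau$ is also consistent with the rest of Section~\ref{sec:uniform-in-time}, where the truncated system is the main object. Either way, the argument goes through.
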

\begin{proof}
%	\blue{NEEDS MODIFICATION!}
	From \eqref{F2} it follows that
	\begin{equation*}
		\frac{d}{dt}\bra{\sumi\intO a_iu_i + \sumj\intM b_jv_j} \leq L\bra{\sumi \intO u_i + \sumj \intM v_j} + K.
	\end{equation*}
	If $L = K = 0$, it yields directly by integrating on $(0,t)$ that 
	\begin{equation}\label{b1}
		\sup_{t\ge 0}\sumi\|u_i(t)\|_{\LO{1}} + \sup_{t\ge 0}\sumj\|v_j(t)\|_{\LM{1}} \leq C.
	\end{equation}
	If $L<0$, there exists $\sigma>0$ such that
	\begin{equation*}
		\frac{d}{dt}\bra{\sumi\intO a_iu_i + \sumj\intM b_jv_j} \leq -\sigma\bra{\sumi\intO a_iu_i + \sumj\intM b_jv_j} + K,
	\end{equation*}
	which also implies \eqref{b1} thanks to Gronwall's lemma. Let $\phi\in C^{2,1}(\bar{\Omega}\times[\tau,\tau+2])$ be a nonnegative function such that $\phi_t +\Delta \phi = 0$ on $Q_{\tau,\tau+2}$, $\pa_{\eta}\phi = 1$ on $M_{\tau,\tau+2}$ and $\phi(\cdot,\tau+2) = 0$ in $\bar \Omega$.  Define $\theta = -\pa_t\phi - \Delta_M\phi$ on $M\times(\tau,\tau+2)$. By integration by parts,
	\begin{equation}\label{d1}
	\begin{aligned}
		\intMtautwo a_id_i(\vat u_i) &= \intMtautwo a_id_i(\vat u_i)\pa_{\eta}\phi\\
		&= \intMtautwo \vat \phi \cdot a_iG_i(u,v) + a_i\intQtautwo \phi\vat'  u_i\\
		&\quad + \intQtautwo \vat \phi \cdot a_iF_i(u) + \intQtautwo a_i(\vat u_i)(d_i-1)\Delta \phi,
	\end{aligned}
	\end{equation}
	and
	\begin{equation}\label{d2}
	\begin{aligned}
		\intMtautwo b_j(\vat v_j)\theta = \intMtautwo\sbra{\vat\phi\cdot  b_jH_j(u,v) + \vat' v_j\phi + b_jv_j(\delta_j - 1)\Delta_M\phi}.
	\end{aligned}
	\end{equation}
	By summing \eqref{d1} in $i=1,\ldots, m_1$, summing \eqref{d2} in $j=1,\ldots, m_2$, and adding the resultants we can apply \eqref{F2} (recalling $L=0$) to get
	\begin{equation*}
	\begin{aligned}
		& \sumi \intMtautwo a_id_i(\vat u_i) + \sumj \intMtautwo b_j(\vat v_j)\theta\\
		& \leq \sumi \sbra{\intQtautwo \phi \vat' u_i + \intQtautwo a_i(\vat u_i)(d_i-1)\Delta \phi} + K\bra{\intQtautwo \vat \phi + \intMtautwo \vat \phi}\\
		&\quad + \sumj \sbra{\intMtautwo \phi \vat' u_i + \intMtautwo b_j(\vat v_j)(\delta_j-1)\Delta_M\phi}.
	\end{aligned}
	\end{equation*}
	Thanks to the fact that $\theta \in \LStaut{\infty}$, $\sup_{t\geq 0}\|u_i(t)\|_{\LO{1}} \leq C$ and $\sup_{t\geq 0}\|v_j(t)\|_{\LM{1}} \leq C$, $\phi \in C^{2,1}(\bar \Omega\times[\tau,\tau+2])$, we conclude that
	\begin{equation*}
		\sumi \intMtautwo \vat u_i \leq C \quad \text{ for all } \quad \tau\in \mathbb N.
	\end{equation*}
	Since $\vat \ge 0$ and $\vat|_{[\tau+1,\tau+2]} = 1$, we get finally
	\begin{equation*}
		\sup_{\tau\in \mathbb N}\sumi\|u_i\|_{\LStau{1}} \leq C.
	\end{equation*}
\end{proof}

\begin{lemma}[\eqref{O}-\eqref{D}-\eqref{F0}-\eqref{F1}-\eqref{F2}-\eqref{eq:intsum1}-\eqref{eq:intsum2}-\eqref{pOpM}]\label{Lp_cylinder} If $L<0$ or $L=K=0$ in \eqref{F2}, then for any positive integer $p\ge 2$, and any $\eps>0$, there exists $K_{p,\eps}>0$ such that
		\begin{equation}\label{d3_1}
		\begin{aligned}
			\sumi \bra{\intQtautwo (\vat u_i)^{p-1+\pO} + \intMtautwo (\vat u_i)^{p-1+\pM}}\\
			\leq K_{p,\eps} + \eps\sbra{\sumi\bra{\intQtautwo u_i^{p-1+\pM} + \intMtautwo u_i^{p-1+\pM}}+ \sumj \intMtautwo v_j^{p-1+\pM}}.
		\end{aligned}
		\end{equation}
		
		As a consequence, for any $1<p<\infty$ and any $\eps>0$, there exists $K_{p,\eps}>0$ such that
		\begin{equation}\label{ff6}
		\begin{aligned}
			\sumi\bra{\|\varphi_\tau u_i\|_{\LQtaut{p}} + \|\varphi_\tau u_i\|_{\LStaut{p}}}\\
			\leq K_{p,\eps} + \eps\sbra{\sumi\bra{\|u_i\|_{\LQtaut{p}} + \|u_i\|_{\LStaut{p}}} + \sumj\|v_j\|_{\LStaut{p}}}.
		\end{aligned}
		\end{equation}
\end{lemma}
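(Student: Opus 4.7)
The strategy is to mirror the proof of Lemma \ref{lemma:Loft} but apply the $L^p$-energy function to the truncated functions $w_i := \vat u_i$ on the cylinder $Q_{\tau,\tau+2}$, where the zero initial condition \eqref{zero_initial} at $t=\tau$ plays the role of initial data. The pivotal observation is the homogeneity identity
$$
\L_p[\vat u](t) = \vat^{p}(t)\,\L_p[u](t),
$$
which holds because every monomial in $\H_p$ has total degree exactly $p$. Differentiating in time yields
$$
\bigl(\L_p[\vat u]\bigr)'(t) = p\vat^{p-1}\vat'(t)\,\L_p[u](t) + \vat^{p}(t)\,\bigl(\L_p[u]\bigr)'(t),
$$
so the differential inequality that was already derived for $\L_p[u]$ in the proof of Lemma \ref{lemma:Loft} can be transported to $\L_p[\vat u]$ at the cost of a single extra $\vat'$-term.

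Concretely, I would multiply inequality \eqref{eq:LprimeEqn} by $\vat^{p}$. Since $\vat$ is spatially constant one has $\vat^p u_i^{p-2}|\na u_i|^2 = w_i^{p-2}|\na w_i|^2$, and since $\vat\in[0,1]$ the remaining $\vat^p$ factors on the nonnegative reaction terms can be dropped. Combined with the derivative identity above this gives
\begin{align*}
\bigl(\L_p[\vat u]\bigr)'(t) + c_p \sumi \intO w_i^{p-2}|\na w_i|^2
&\leq K_{p,\theta,\eps}\left[1 + \sumi \intO u_i^{p-1+\pO} + \sumi \intM u_i^{p-1+\pM}\right] \\
&\quad + \eps \sumj \intM v_j^{p-1+\pM} + p\vat^{p-1}\vat'(t)\,\L_p[u](t).
\end{align*}
Integrating over $(\tau,\tau+2)$, the contribution at $t=\tau$ vanishes because $\L_p[\vat u](\tau)=0$. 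The $\vat'$-term is supported in $(\tau,\tau+1)$ and is bounded by $C\sumi\intQtautone u_i^{p}$ (since $\L_p[u]$ is comparable to $\sumi\|u_i\|_{L^{p}(\Omega)}^{p}$); invoking the $\tau$-independent bound $\sup_t\|u_i(t)\|_{\LO{1}}\leq C$ from Lemma \ref{L1bound_cylinder}, which is where the hypothesis $L<0$ or $L=K=0$ is used, together with Young's inequality, this term reduces to $C_\eps + \eps\sumi\intQtautwo u_i^{p-1+\pM}$.

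Next I would apply Lemma \ref{prepare} to each $w_i$ with $a=1$, which is legitimate because $\|w_i(t)\|_{\LO{1}}\leq\|u_i(t)\|_{\LO{1}}$ is uniformly bounded in $\tau$. The interpolation gives
$$
\intO w_i^{p-1+\pO} + \intM w_i^{p-1+\pM} \leq \eps'\Bigl(\intO w_i^{p-2}|\na w_i|^2 + \intO w_i^{p}\Bigr) + C_{p,\eps'}.
$$
Integrating in time, using $w_i\leq u_i$ and Young's inequality to control $\intQtautwo w_i^{p}$ by $C_\eps + \eps\sumi\intQtautwo u_i^{p-1+\pM}$, and absorbing the gradient term back into the LHS of the previous differential inequality, I arrive at \eqref{d3_1}. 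The consequence \eqref{ff6} then follows by interpolating the resulting $L^{p-1+\pO}$ and $L^{p-1+\pM}$ bounds on $\vat u_i$ against the uniform $L^{1}$-bound, precisely mirroring the deduction of \eqref{ff4} from \eqref{eps_critical} in Lemma \ref{lemma:Loft}.

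The main obstacle is the extra $\vat' u_i$ contribution produced by the truncation: without a $\tau$-independent $L^{1}$-bound this term could grow with $\tau$ and destroy the cylinder-uniform character of the estimate, which is the structural reason the hypothesis $L<0$ or $L=K=0$ is imposed here. Once that bound is in hand, the degree-$p$ homogeneity identity $\L_p[\vat u]=\vat^{p}\L_p[u]$ is the key device that allows the whole intermediate-sum construction of Lemma \ref{lemma:Loft} to be reused for the truncated system rather than rebuilt from scratch.
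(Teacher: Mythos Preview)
Your homogeneity identity $\L_p[\vat u]=\vat^{p}\L_p[u]$ is correct and is a perfectly valid alternative to the paper's device $(\vat\L_p[u])'=\vat'\L_p[u]+\vat(\L_p[u])'$. The gap is not there; it is in your choice of starting inequality.

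You multiply \eqref{eq:LprimeEqn} by $\vat^{p}$. After dropping the $\vat^p$ factors on the right (which are $\leq 1$) you are left with
\[
K_{p,\theta,\eps}\Bigl[1+\sumi\intO u_i^{p-1+\pO}+\sumi\intM u_i^{p-1+\pM}\Bigr]
\]
on the right-hand side, with a coefficient $K_{p,\theta,\eps}$ that is \emph{not} small (it blows up as $\eps\to0$). Applying Lemma~\ref{prepare} to $w_i=\vat u_i$ afterwards only produces gradient terms $\intO w_i^{p-2}|\nabla w_i|^2$ on the left; it does nothing to these large $u_i$-terms on the right, and they cannot be absorbed into the $w_i$-gradient because $\intO u_i^{p-2}|\nabla u_i|^2 \geq \intO w_i^{p-2}|\nabla w_i|^2$, the inequality going the wrong way. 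The resulting bound therefore has $K_{p,\theta,\eps}\intQtautwo u_i^{p-1+\pO}$ on the right, which is neither a constant nor an $\eps$-term, so \eqref{d3_1} does not follow.

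The paper avoids this by starting one step later, from the simplified form of \eqref{ff5} in which Lemma~\ref{prepare} has \emph{already} been applied to $u_i$ (not $w_i$), so that the terms $\intO u_i^{p-1+\pO}$ and $\intM u_i^{p-1+\pM}$ sit on the \emph{left} with a fixed positive coefficient. Multiplying that inequality by $\vat$ (or, equally well, by $\vat^{p}$ via your homogeneity identity) keeps them on the left, and the elementary inequality $\vat\, u_i^{q}\geq(\vat u_i)^{q}$ for $0\leq\vat\leq1$, $q\geq1$, turns them into the desired $(\vat u_i)$-terms. The only new contribution is then the single $\vat'\L_p[u]$ term, which is handled exactly as you describe. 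So your overall plan is right; you just need to swap the order of ``apply Lemma~\ref{prepare}'' and ``multiply by the cutoff''.
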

\begin{proof}
	Recall the Lyapunov-like function $\L_p[u]$ in \eqref{Lp}, with $\theta$ is chosen in \eqref{theta1} and \eqref{theta2}. Thanks to \eqref{ff5},
	\begin{equation*}
	\begin{aligned}
		\bra{\L_p[u]}'(t) + C\sumi \bra{\intO u_i^{p-1+\pO} + \intM u_i^{p-1+\pM}}\leq K_{p,\theta}\sbra{1 + \eps\sumj\intM v_j^{p-1+\pM}}.
	\end{aligned}
	\end{equation*}
	Therefore, we have
	\begin{equation}\label{d4}
	\begin{aligned}
		\bra{\vat\L_p[u]}' + C\sumi \bra{\intO \vat u_i^{p-1+\pO} + \intM \vat u_i^{p-1+\pM}}\\
		\leq \vat' \L_p(t) + K_{p,\theta,\eps}\vat + \eps\sumj\intM \vat v_j^{p-1+\pM}.
	\end{aligned}
	\end{equation}
	Since $0\leq \vat \leq 1$, 
	\begin{equation*}
		\vat u_i^{p-1+\pO} \ge (\vat u_i)^{p-1+\pO}, \quad \vat u_i^{p-1+\pM} \ge (\vat u_i)^{p-1+\pM}.
	\end{equation*}
	From \eqref{Lp} and $0\leq \vat' \leq C$, it follows that 
	\begin{equation*}
		\abs{\vat' \L_p[u](t)} \leq C\sumi \intO u_i^p \leq C_\eps + \eps\sumi \intO u_i^{p-1+\pM}.
	\end{equation*}
	Putting all these into \eqref{d4} and integrating the resultant on $(\tau,\tau+2)$, noticing that $\vat(\tau) = 0$, we obtain
	\begin{equation*}
	\begin{aligned}
	\intQtautwo (\vat u_i)^{p-1+\pO} + \intMtautwo (\vat u_i)^{p-1+\pM}\\
	\leq K_{p,\theta,\eps} + \eps\sbra{\sumi\bra{\intQtautwo u_i^{p-1+\pM} + \intMtautwo u_i^{p-1+\pM}} + \sumj\intMtautwo  v_j^{p-1+\pM}},
	\end{aligned}
	\end{equation*}
	which is the desired estimate \eqref{d3_1}. From this, \eqref{ff6} can be obtained similarly to the last step of Lemma \ref{lemma:Loft}.
\end{proof}

We need the following elementary result whose proof is straightforward.
\begin{lemma}\label{lem:elementary}
	Let $\{y_n\}_{n\ge 0}$ be a nonnegative sequence and $\mathscr N = \{n\in \mathbb N: y_{n-1}\leq y_n \}$. If there exists $K>0$ (independent of $n$) such that
	\begin{equation*}
	y_n \leq K \quad \text{ for all } \quad n\in \mathscr N,
	\end{equation*}
	then
	\begin{equation*}
	y_n \leq \max\{y_0, K\} \quad \text{ for all } \quad n\in \mathbb N.
	\end{equation*}
\end{lemma}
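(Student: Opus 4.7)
The plan is to proceed by a short induction on $n$, using the defining property of $\mathscr N$ to bifurcate the inductive step into the easy monotone-decreasing case and the case covered directly by the hypothesis. Set $M := \max\{y_0, K\}$; the claim is that $y_n \le M$ for every $n\in\mathbb N\cup\{0\}$.

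The base case $n=0$ is immediate from $y_0\leq M$. For the inductive step, assume $y_{n-1}\le M$ for some $n\ge 1$. I would distinguish two cases. If $y_n\le y_{n-1}$, then $y_n\le y_{n-1}\le M$ by the inductive hypothesis, and we are done. Otherwise $y_{n-1}<y_n$, which by the very definition of $\mathscr N$ means $n\in\mathscr N$; the hypothesis of the lemma then gives $y_n\le K\le M$. Either way, $y_n\le M$, completing the induction.

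Alternatively, and perhaps more transparently, one can argue by minimal counterexample: suppose the conclusion fails, and let $n^*$ be the smallest index with $y_{n^*}>M$. Since $y_0\le M$ we must have $n^*\ge 1$, and by minimality $y_{n^*-1}\le M<y_{n^*}$, which forces $n^*\in\mathscr N$ and thus $y_{n^*}\le K\le M$, a contradiction. There is no real obstacle here; the only thing to be mindful of is to not forget the case $n=0$ separately (so that $y_0$ itself may exceed $K$ but still be bounded by $M$), which is exactly why the bound involves $\max\{y_0,K\}$ rather than $K$ alone.
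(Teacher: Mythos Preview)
Your proof is correct. The paper itself omits the argument entirely, calling it ``straightforward,'' and your induction (or minimal-counterexample variant) is exactly the kind of elementary verification intended.
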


\begin{lemma}[\eqref{O}-\eqref{D}-\eqref{F0}-\eqref{F1}-\eqref{F2}-\eqref{eq:intsum1}-\eqref{eq:intsum2}-\eqref{pOpM}-\eqref{quasi-uniform}]\label{lem4} There exist constants $C>0$ and $\gamma>0$ such that for all $\tau\in \mathbb N$,
	\begin{equation}\label{desired1}
		\sumi\bra{\|u_i\|_{\LQtau{\Lam+\gamma}}+\|u_i\|_{\LStau{\Lam+\gamma}}} + \sumj\|v_j\|_{\LStau{\Lam+\gamma}} \leq C.
	\end{equation}
\end{lemma}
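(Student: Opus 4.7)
The plan is to replicate the duality argument of Proposition \ref{pro:duality} on the shifted cylinders $Q_{\tau,\tau+2}$ and $M_{\tau,\tau+2}$, but now applied to the truncated system \eqref{shifted_sys}--\eqref{zero_initial} in the variables $(\vat u_i, \vat v_j)$, which have zero initial data at $t=\tau$. Because all regularity constants that enter the argument depend only on the fixed window length $2$ and on $\Omega, M, n, \Lam$, and the diffusion coefficients, but not on $\tau$, the outcome will be a bound uniform in $\tau$. The role of the $L^\infty_tL^1_x$ a-priori bound (which in Proposition \ref{pro:duality} grew with $T$) is now played by the uniform-in-$\tau$ $L^1$-bounds of Lemma \ref{L1bound_cylinder}; the role of Lemma \ref{lemma:Loft} is played by its cutoff analogue Lemma \ref{Lp_cylinder}.

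\textbf{Step 1 (dual setup).} Fix $\vk\in(0,\vk_0)$, with $\vk_0$ as in Lemma \ref{lem:improved_quasi_uniform}, small enough to satisfy both \eqref{l1} and \eqref{vk_small}. For an arbitrary $0\le\psi\in L^{(\Lam+\vk)'}(M_{\tau,\tau+2})$ of unit norm, let $\phi$ solve the dual problem \eqref{dual_eq} on $Q_{\tau,\tau+2}$ with $\delta=(\delta_{\max}+\delta_{\min})/2$ and terminal data $\phi(\tau+2)=0$. By time-translation invariance, Lemma \ref{dual_boundary} yields the full suite of estimates for $\phi$ (on $W^{2,1}_{(\Lam+\vk)'}(M_{\tau,\tau+2})$, $W^{2,1}_{\Lam'}(Q_{\tau,\tau+2})$, the trace $\pa_\eta\phi$, the $L^{q^\dag}, L^{q^*}$ embeddings, and the crucial $\|\Delta_M\phi\|_{\LStaut{(\Lam+\vk)'}}\le\frac{2C_{\mr,(\Lam+\vk)'}^M}{\delta_{\max}+\delta_{\min}}$) with constants independent of $\tau$.

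\textbf{Step 2 (duality identity).} Multiply the first equation of \eqref{shifted_sys} by $a_i\phi$ and integrate by parts over $Q_{\tau,\tau+2}$, and multiply the third equation by $b_j\phi$ and integrate over $M_{\tau,\tau+2}$. The ``initial data" terms vanish thanks to \eqref{zero_initial}; in exchange the cutoff produces the extra source terms $\vat' u_i$ on $Q_{\tau,\tau+1}$ and $\vat' v_j$ on $M_{\tau,\tau+1}$, which are bounded in $L^1$ \emph{uniformly in $\tau$} by Lemma \ref{L1bound_cylinder}. Using the mass control \eqref{F2} together with the uniform $L^1$-estimates on $u_i$, $v_j$, and the trace $\|u_i\|_{\LStau{1}}$, we arrive at the analogue of \eqref{est3}:
\begin{equation*}
\intMtautwo \bra{\sumj b_j\vat v_j}\psi \le C + C\sumi\bra{\|\vat u_i\|_{\LQtaut{\Lam}}+\|\vat u_i\|_{\LStaut{\Lam}}} + \frac{\delta_{\max}-\delta_{\min}}{\delta_{\max}+\delta_{\min}}C_{\mr,(\Lam+\vk)'}^M\norm{\sumj b_j\vat v_j}_{\LStaut{\Lam+\vk}},
\end{equation*}
with $C$ independent of $\tau$.

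\textbf{Step 3 (interpolation and absorption).} Interpolating between the uniform $L^1$-bound of Lemma \ref{L1bound_cylinder} and $L^{\Lam+\vk}$ gives $\|\sumj b_j\vat v_j\|_{\LStaut{\Lam}}\le C\|\sumj b_j\vat v_j\|_{\LStaut{\Lam+\vk}}^{1-\beta}$, and Lemma \ref{Lp_cylinder} controls the $\sumi\|\vat u_i\|_{\LStaut{\Lam+\vk}}$ and $\sumi\|\vat u_i\|_{\LQtaut{\Lam+\vk}}$ terms by $C+\eps\|\sumj\vat v_j\|_{\LStaut{\Lam+\vk}}$. By the quasi-uniform assumption \eqref{quasi-uniform} and Lemma \ref{lem:improved_quasi_uniform}, the prefactor $\frac{\delta_{\max}-\delta_{\min}}{\delta_{\max}+\delta_{\min}}C_{\mr,(\Lam+\vk)'}^M$ is strictly less than $1$; choosing $\eps$ small and applying Young's inequality to absorb $\|\sumj b_j\vat v_j\|_{\LStaut{\Lam+\vk}}^{1-\beta}$ into a fraction of $\|\sumj b_j\vat v_j\|_{\LStaut{\Lam+\vk}}$, duality yields $\|\sumj b_j\vat v_j\|_{\LStaut{\Lam+\vk}}\le C$, and feeding this back into Lemma \ref{Lp_cylinder} gives the same kind of uniform bound for $\|\vat u_i\|_{\LQtaut{\Lam+\vk}}$ and $\|\vat u_i\|_{\LStaut{\Lam+\vk}}$.

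\textbf{Step 4 (remove the cutoff).} Since $\vat\equiv 1$ on $[\tau+1,\tau+2]$, Step 3 delivers a uniform bound on $L^{\Lam+\vk}(\Omega\times(\tau+1,\tau+2))$ and $L^{\Lam+\vk}(M\times(\tau+1,\tau+2))$ of $u_i$ and $v_j$. Shifting the index $\tau\mapsto\tau-1$ for $\tau\ge 1$ produces the required bound on $Q_{\tau,\tau+1}$ and $M_{\tau,\tau+1}$; the case $\tau=0$ follows from Theorem \ref{thm2} (the solution is global and bounded on $[0,2]$). Setting $\gamma=\vk$ concludes \eqref{desired1}.

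\textbf{Main obstacle.} The only nonroutine points are (i) controlling the cutoff-derivative source terms $\vat' u_i,\vat' v_j$ uniformly in $\tau$, which is exactly what the hypothesis $L<0$ or $L=K=0$ in \eqref{F2} provides via Lemma \ref{L1bound_cylinder}, and (ii) ensuring that the absorbing constant in Step 3 remains strictly less than $1$ after enlarging the Lebesgue exponent from $\Lam$ to $\Lam+\vk$, which is the content of Lemma \ref{lem:improved_quasi_uniform}. Once both are in place, the proof of Proposition \ref{pro:duality} transfers verbatim, with every integral over $(0,T)$ replaced by its counterpart over $(\tau,\tau+2)$ and every $C_T$ replaced by a $\tau$-independent constant.
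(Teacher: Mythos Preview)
Your overall plan is the right one, but there is a genuine gap in Steps 2--3 that prevents the direct absorption you claim, and the missing ingredient is precisely the recursive sequence argument (Lemma \ref{lem:elementary}) that the paper uses.

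The issue is that both sources of error on the right-hand side produce \emph{un-truncated} norms of $u_i,v_j$ on $(\tau,\tau+2)$, not norms of $\vat u_i,\vat v_j$. First, in Step 2 the cutoff-derivative terms $\int\phi\,\vat' u_i$ and $\int\phi\,\vat' v_j$ cannot be bounded by the $L^1$-estimates of Lemma \ref{L1bound_cylinder} alone, because $\phi$ is only in $L^{\Lam'}$ (not in $L^\infty$ when $n\ge 3$ and $\Lam\ge 2$); after H\"older and interpolation between $L^1$ and $L^{\Lam+\vk}$ one gets $C_\eps+\eps\|u_i\|_{\LQtaut{\Lam+\vk}}$ with the un-truncated $u_i$, exactly as in \eqref{e3}--\eqref{e6}. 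Second, in Step 3 you misquote Lemma \ref{Lp_cylinder}: the right-hand side of \eqref{ff6} carries $\|u_i\|_{\LQtaut{p}},\|u_i\|_{\LStaut{p}},\|v_j\|_{\LStaut{p}}$ without $\vat$, so it cannot be absorbed into $\|\vat u_i\|,\|\vat v_j\|$ on the left.

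Because $\vat\equiv 1$ only on $[\tau+1,\tau+2]$, what the combination of the duality estimate and \eqref{ff6} actually yields is a \emph{recursive} inequality: writing $y_\sigma$ for the total $L^{\Lam+\vk}$-norm of $(u,v)$ on the window $(\sigma,\sigma+1)$, one obtains $y_{\tau+1}\le C_\eps + C\eps(y_\tau+y_{\tau+1})$, i.e.\ \eqref{ff10} after reindexing. This couples adjacent windows and does not give a uniform bound directly. The paper closes it with Lemma \ref{lem:elementary}: on the set $\{\tau:\,y_\tau\le y_{\tau+1}\}$ one absorbs to get $y_{\tau+1}\le K$, and the lemma then propagates this bound to all $\tau$ (the base case $y_0$ being finite by the global existence part). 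Your Step 4 index-shift is fine once such a bound is established, but it cannot replace the sequence argument.
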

\begin{proof}
	As in Lemma \ref{pro:duality}, we choose $\vk>0$ small enough such that \eqref{l1} holds. Let $0\leq \psi \in \LStaut{(\Lam+\vk)'}$ with $\|\psi\|_{\LStaut{(\Lam+\vk)'}} = 1$, and let $\phi$ be the solution to \eqref{dual_eq} with $T = \tau+2$ and
	\begin{equation*}
		\delta = \frac{\delta_{\max}+\delta_{\min}}{2}
	\end{equation*}
	with $\delta_{\max}$ and $\delta_{\min}$ are in \eqref{dmaxmin}. From Propositions \ref{dual_boundary} and \eqref{l1}, we have
	\begin{equation}\label{e1}
		\|\phi\|_{W^{2,1}_{\Lam'}(Q_{\tau,\tau+2})} + \|\pa_\eta\phi\|_{\LStaut{\Lam'}} + \|\phi\|_{W^{2,1}_{(\Lam+\vk)'}(M_{\tau,\tau+2})} \leq C.
	\end{equation}
	In particular,
	\begin{equation}\label{ff7}
		\|\Delta_M\phi\|_{\LStaut{(\Lam+\vk)'}} \leq \frac{2C_{\mr,(\Lam+\vk)'}^M}{\delta_{\max}+\delta_{\min}}.
	\end{equation}
	By integration by parts (see the proof of Lemma \ref{pro:duality}) we have
	\begin{equation}\label{d11}
	\begin{aligned}
		0&= -\sumi \intQtautwo a_i(\vat u_i)(\pa_t\phi + \Delta \phi)\\
		&= \sumi\intQtautwo a_i\phi(\vat' u_i + \vat F_i(u)) - \sumi \intMtautwo d_ia_i\vat u_i\pa_{\eta}\phi\\
		&\quad + \sumi \intMtautwo \phi \vat a_iG_i(u,v) + \sumi \intQtautwo a_i(d_i-1)\vat u_i\Delta\phi
	\end{aligned}
	\end{equation}
	and
	\begin{equation}\label{d12}
	\begin{aligned}
		\intMtautwo\bra{\sumj b_j\vat v_j}\psi &=\sumj \intMtautwo b_j\phi \vat' v_j + \sumj \intMtautwo \phi \vat b_jH_j(u,v) \\
		&\quad + \sumj \intMtautwo b_j(\delta_j-\delta)(\vat v_j)\Delta_M\phi.
	\end{aligned}
	\end{equation}
	Sum \eqref{d11} and \eqref{d12}, and note that either $L < 0$ or $L=K=0$ in \eqref{eq:intsum1} and \eqref{eq:intsum2}. We provide the argument in the case when $L=K=0$, and leave the similar case when $L<0$ for the reader. We calculate
	\begin{equation}\label{d13}
	\begin{aligned}
		\intMtautwo\bra{\sumj b_j\vat v_j}\psi &\leq \sumi\intQtautwo \phi a_i\vat' u_i + \sumi \intQtautwo a_i(d_i-1)\vat u_i\Delta \phi\\
		&\quad-\sumi\intMtautwo a_id_i\vat u_i\pa_{\eta}\phi + \sumj \intMtautwo b_j\phi \vat' v_j \\
		&\quad + \sumj \intMtautwo b_j(\delta_j-\delta)(\vat v_j)\Delta_M\phi\\
		&\quad=: (I)+(II)+(III)+(IV)+(V).
	\end{aligned}
	\end{equation}
	We estimate five terms on the right hand side of \eqref{d13} as follows. We use \eqref{e1} to estimate, for $\gamma\in (0,1)$ such that $\frac{1}{\Lam}=\frac{\gamma}{1}+\frac{1-\gamma}{\Lam+\vk}$ and any $\eps>0$,
	\begin{equation}\label{e3}
	\begin{aligned}
		|(I)| &\leq C\sumi\|\phi\|_{\LQtaut{\Lam'}}\|u_i\|_{\LQtaut{\Lam}} \leq C\sumi\|u_i\|_{\LQtaut{1}}^{\gamma}\|u_i\|_{\LQtaut{\Lam+\vk}}^{1-\gamma}\\
		&\leq C\sumi\|u_i\|_{\LQtaut{\Lam+\vk}}^{1-\gamma} \leq C_\eps + \eps\sumi\|u_i\|_{\LQtaut{\Lam+\vk}}.
	\end{aligned}
	\end{equation}
	Similarly,
	\begin{equation}\label{e4}
	|(II)| \leq C\sumi\|u_i\|_{\LQtaut{\Lam}}\|\Delta\phi\|_{\LQtaut{\Lam'}} \leq C_\eps + \eps\sumi\|u_i\|_{\LQtaut{\Lam+\vk}},
	\end{equation}
	\begin{equation}\label{e5}
		|(III)| \leq C\sumi \|u_i\|_{\LStaut{\Lam}}\|\pa_\eta\phi\|_{\LStaut{\Lam'}} \leq C_\eps + \eps\sumi \|u_i\|_{\LStaut{\Lam+\vk}},
	\end{equation}
	\begin{equation}\label{e6}
	\begin{aligned}
		|(IV)| \leq C\sumj\|\phi\|_{\LStaut{\Lam'}}\|v_j\|_{\LStaut{\Lam}}\leq C_\eps +\eps\sumj \|v_j\|_{\LStaut{\Lam+\vk}}.
	\end{aligned}
	\end{equation}
	For $(V)$ we estimate using \eqref{ff7}
	\begin{equation}\label{e7}
	\begin{aligned}
	|(V)| &\leq \frac{\delta_{\max}-\delta_{\min}}{\delta_{\max}+\delta_{\min}}\intMtautwo\abs{\sumj b_j\vat v_j}|\Delta_M\phi|\\
	&\leq  \frac{\delta_{\max}-\delta_{\min}}{\delta_{\max}+\delta_{\min}}\norm{\sumj b_j\vat v_j}_{\LStaut{\Lam+\vk}}\|\Delta_M\|_{\LStaut{(\Lam+\vk)'}}\\
	&\leq \frac{\delta_{\max}-\delta_{\min}}{\delta_{\max}+\delta_{\min}}C_{\mr,(\Lam+\vk)'}^M\norm{\sumj b_j\vat v_j}_{\LStaut{\Lam+\vk}}.
	\end{aligned}
	\end{equation}
	Using \eqref{e3}--\eqref{e7} into \eqref{d13}, it follows from duality, \eqref{quasi-uniform} and Lemma \ref{lem:improved_quasi_uniform}, that
	\begin{equation}\label{e9}
	\begin{aligned}
	&\norm{\sumj b_j\vat v_j}_{\LStaut{\Lam+\vk}}\\
	&\leq C_\eps + \eps \sbra{\sumi\bra{\|u_i\|_{\LQtaut{\Lam+\vk}} + \|u_i\|_{\LStaut{\Lam+\vk}}} + \sumj\|v_j\|_{\LStaut{\Lam+\vk}}}.
	\end{aligned}
	\end{equation}
	Combining \eqref{e9} with with \eqref{ff6} in Lemma \ref{Lp_cylinder} (choosing $p = \Lam+\vk$) we have
	\begin{equation}\label{ff8}
		\begin{aligned}
		&\sumi \bra{\|\vat u_i\|_{\LQtaut{\Lam+\vk}} + \|\vat u_i\|_{\LStaut{\Lam+\vk}}} +  \norm{\vat \sumj b_j v_j}_{\LStaut{\Lam+\vk}}\\
		&\leq C_\eps +\eps \sbra{\sumi\bra{\|u_i\|_{\LQtaut{\Lam+\vk}} + \|u_i\|_{\LStaut{\Lam+\vk}}} + \sumj\|v_j\|_{\LStaut{\Lam+\vk}}}.
		\end{aligned}
	\end{equation}
	Recall that $\vat\geq 0$ and $\vat|_{[\tau,\tau+1]}\equiv 1$, it follows from \eqref{ff8} that
	\begin{equation}\label{ff9}
	\begin{aligned}
		&\sumi \bra{\|u_i\|_{\LQtau{\Lam+\vk}} + \|u_i\|_{\LStau{\Lam+\vk}}} + \norm{\sumj b_jv_j}_{\LStau{\Lam+\vk}}\\
		&\leq C_\eps +C\eps \sbra{\sumi\bra{\|u_i\|_{\LQtaut{\Lam+\vk}} + \|u_i\|_{\LStaut{\Lam+\vk}}} + \norm{\sumj b_jv_j}_{\LStaut{\Lam+\vk}}}.
		\end{aligned}
	\end{equation}
	For $\tau \in \mathbb N$, we define 
	\begin{equation*}
		y_{\tau}:= \sumi\bra{\|u_i\|_{\LQtau{\Lam+\vk}} + \|u_i\|_{\LStau{\Lam+\vk}}} + \norm{\sumj b_jv_j}_{\LStau{\Lam+\vk}}.
	\end{equation*}
	Inequality \eqref{ff9} implies
	\begin{equation}\label{ff10}
		y_\tau \leq C + C\eps(y_\tau + y_{\tau+1}).
	\end{equation}
	Define $\mathscr N  = \{\tau \in \mathbb N: y_{\tau}\leq y_{\tau+1}\}$. Then for any $\tau\in \mathscr N$, by choosing $\eps$ sufficiently small, we obtain from \eqref{ff10}
	\begin{equation*}
		y_\tau \leq C,
	\end{equation*}
	where $C$ is independent of $\tau$. From Lemma \ref{elementary}, we have
	\begin{equation*}
		y_\tau \leq C \quad \text{ for all } \quad \tau\in \mathbb N,
	\end{equation*}
	which proves the desired estimate \eqref{desired1}.
\end{proof}

\begin{lemma}[\eqref{O}-\eqref{D}-\eqref{F0}-\eqref{F1}-\eqref{F2}-\eqref{eq:intsum1}-\eqref{eq:intsum2}-\eqref{pOpM}-\eqref{mM_general}-\eqref{quasi-uniform}]\label{lem2} Assume that $L<0$ or $L=K=0$ in \eqref{F2}. 
	For any $\tau\in \mathbb N$, $2\leq p$, any $k\in \{1,\ldots, m_2\}$, and any $\eps>0$, there exists a constant $C_\eps>0$ such that
	\begin{equation}\label{d6}
	\begin{aligned}
		\|\vat v_k\|_{\LStaut{p}} \leq C_\eps &+ C_\eps\sum_{j=1}^{k-1}\|\vat v_j\|_{\LStaut{p}}+ \eps\sumi\bra{\|u_i\|_{\LQtaut{p}} +  \|u_i\|_{\LStaut{p}}}\\
		&+ \eps\sumj\|v_j\|_{\LStaut{p}} .
	\end{aligned}
	\end{equation}
	Consequently, for any $\eps>0$, there exists $C_\eps>0$ such that 
	\begin{equation}\label{d6_1}
		\|\vat v_k\|_{\LStaut{p}} \leq C_\eps + \eps\sumi\bra{\|u_i\|_{\LQtaut{p}} +  \|u_i\|_{\LStaut{p}}} + \eps\sumj\|v_j\|_{\LStaut{p}} 
	\end{equation}
	for all $k=1,\ldots, m_2$.
\end{lemma}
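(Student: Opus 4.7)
\textbf{Plan for Lemma \ref{lem2}.} The strategy mirrors that of Lemma \ref{lem:dual1}, but is applied to the truncated system \eqref{shifted_sys} on the cylinder $Q_{\tau,\tau+2}$ with the zero initial condition \eqref{zero_initial}, and the role of the global $L^{\Lam+\vk}$-bound used in Lemma \ref{lem:dual1} is now played by the uniform-in-$\tau$ bound of Lemma \ref{lem4}. Because of \eqref{zero_initial}, no boundary term at $t=\tau$ appears upon integration by parts, which eliminates any dependence on the initial data $u_0, v_0$; the dual problem on $(\tau,\tau+2)$ has fixed time length $2$, so all constants from Lemma \ref{dual_boundary} are independent of $\tau$. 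It suffices to prove the estimate for $p$ larger than $\Lam+\vk$ (with $\vk$ as in Lemma \ref{lem4}), since the case $p\in[2,\Lam+\vk]$ then follows from H\"older on the cylinder of fixed measure together with Lemma \ref{lem4}.

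\textbf{Duality setup and first decomposition.} Fix $0 \le \psi \in L^{p'}(M_{\tau,\tau+2})$ of unit norm, and let $\phi\ge 0$ be the solution of the dual problem \eqref{dual_eq} on $Q_{\tau,\tau+2}$ with $\delta = \delta_k$ and terminal condition $\phi(\cdot,\tau+2)=0$, provided by Lemma \ref{dual_boundary}. Testing the $v_k$-equation of \eqref{shifted_sys} against $\phi$ and integrating by parts, using $(\vat v_k)(\tau)=0$ and $\pa_t\phi + \delta_k\Delta_M\phi = -\psi$, one obtains
\begin{equation*}
\intMtautwo (\vat v_k)\psi = \intMtautwo \phi \vat' v_k + \intMtautwo \phi\vat H_k(u,v) =: (A)+(B).
\end{equation*}
The cutoff term $(A)$ has no analogue in Lemma \ref{lem:dual1}. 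Since $|\vat'|$ is bounded, H\"older's inequality pairing the uniform-in-$\tau$ bound $\|v_k\|_{\LStaut{\Lam+\vk}} \le C$ (Lemma \ref{lem4}) with a suitable $\phi$-norm coming from Lemma \ref{dual_boundary} (for $p$ large one controls $\phi$ in $L^{(\Lam+\vk)'}(M_{\tau,\tau+2})$ via maximal regularity and the embedding \eqref{Lq^*}) yields $|(A)|\le C$ uniformly in $\tau$.

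\textbf{Bounding $(B)$.} Substitute the pointwise consequence of \eqref{eq:intsum2} and $a_{k+m_1,k+m_1}=1$,
\begin{equation*}
H_k(u,v) \le -\sumi a_{(k+m_1)i}G_i(u,v) - \sum_{j=1}^{k-1}a_{(k+m_1)(j+m_1)}H_j(u,v) + L_2\sbra{\sumi u_i^{\mM}+\sumj v_j^{\mM}+1},
\end{equation*}
and split $(B)=(B1)+(B2)+(B3)$ accordingly. For $(B1)$, use $\vat G_i(u,v) = d_i\pa_\eta(\vat u_i)$ together with Green's formula on $Q_{\tau,\tau+2}$ and the first equation of \eqref{shifted_sys}; this reproduces the chain of identities in Lemma \ref{lem:dual1} essentially verbatim, the only new term being $\intQtautwo \phi\vat' u_i$, which is handled exactly as $(A)$ via Lemma \ref{lem4}. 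For $(B2)$, use the third equation of \eqref{shifted_sys} to write $\vat H_j(u,v) = \pa_t(\vat v_j) - \delta_j\Delta_M(\vat v_j) - \vat' v_j$ for $j<k$, integrate by parts in time, substitute $\pa_t\phi = -\delta_k\Delta_M\phi - \psi$, and bound $\|\Delta_M\phi\|_{\LStaut{p'}} \le C$ via Lemma \ref{dual_boundary}; this produces precisely the $\sum_{j=1}^{k-1}\|\vat v_j\|_{\LStaut{p}}$-term of \eqref{d6}, plus an extra $\vat'$-contribution absorbed as in $(A)$. The remainder $(B3) = L_2\intMtautwo \phi\vat(\sumi u_i^{\mM}+\sumj v_j^{\mM}+1)$ is treated exactly as $(B3)$ in Lemma \ref{lem:dual1}: choose $\alpha\in(0,1)$ satisfying \eqref{alpha}, split $u_i^{\mM}= u_i^{\mM-\alpha}u_i^\alpha$ and $v_j^{\mM}=v_j^{\mM-\alpha}v_j^\alpha$, apply H\"older with three factors, interpolate against the uniform bound of Lemma \ref{lem4}, and use the embedding \eqref{Lq^*} for $\phi$; by Young's inequality the resulting sub-linear powers of $\|u_i\|_{\LStaut{p}}$ and $\|v_j\|_{\LStaut{p}}$ can be absorbed as the $\eps$-small terms on the right-hand side of \eqref{d6}.

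\textbf{Consequence \eqref{d6_1}.} Collecting $(A)$ and $(B)$ and dualizing in $\psi$ yields \eqref{d6}, and \eqref{d6_1} then follows by induction on $k$ via Lemma \ref{elementary} applied for each fixed $\eps$, exactly as in the derivation of Lemma \ref{lem1}. The main obstacle is the $\tau$-uniformity of all constants, which is guaranteed by the three features already noted: the fixed time length of the cylinder, the vanishing initial data in \eqref{zero_initial}, and the uniform $L^{\Lam+\vk}$-bounds of Lemma \ref{lem4} absorbing every new term produced by the cutoff $\vat$.
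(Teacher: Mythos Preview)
Your overall strategy is correct and mirrors the paper's proof, but there is a genuine gap in how you handle the cutoff term $(A) = \intMtautwo \phi\vat'v_k$ (and the analogous $\vat'$-contributions in $(B1)$ and $(B2)$). You claim that for large $p$ one controls $\phi$ in $L^{(\Lambda+\vk)'}(M_{\tau,\tau+2})$ via the embedding \eqref{Lq^*}. This is false in general: \eqref{Lq^*} only gives $\phi\in L^{(p')^*}(M_{\tau,\tau+2})$ with $(p')^* = \frac{(n+1)p'}{n+1-2p'}$, and as $p\to\infty$ one has $(p')^*\to \frac{n+1}{n-1}$. For $n\ge 4$ and $\Lambda$ close to $2$ (e.g.\ the setting of Theorem~\ref{main:thm0}), $(\Lambda+\vk)'$ is close to $2 > \frac{n+1}{n-1}$, so the H\"older pairing you propose is unavailable. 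The paper's remedy is different: it pairs $\phi\in L^{q}(M_{\tau,\tau+2})$ for \emph{some} $q>p'$ (any exponent strictly above $p'$ and below $(p')^*$ suffices) with $v_k\in L^{q'}(M_{\tau,\tau+2})$ where $q'<p$, and then interpolates $\|v_k\|_{L^{q'}}\le \|v_k\|_{L^1}^{\theta}\|v_k\|_{L^p}^{1-\theta}$ using the uniform $L^1$-bound of Lemma~\ref{L1bound_cylinder}, obtaining $C_\eps + \eps\|v_k\|_{\LStaut{p}}$ via Young. The point is that Lemma~\ref{lem4} is \emph{not} needed for these cutoff terms; what matters is any exponent strictly below $p$ together with a $\tau$-uniform bound, and $L^1$ already suffices.

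A second, smaller issue: following Lemma~\ref{lem:dual1} ``verbatim'' for $(B1)$, the analogues of $(B13)$ and $(B14)$ produce $C\|\vat u_i\|_{\LStaut{p}}$ and $C\|\vat u_i\|_{\LQtaut{p}}$ with a \emph{fixed} constant, not with an $\eps$. In Lemma~\ref{lem:dual1} these were absorbed at the end via \eqref{ff4}; the analogous absorption here would require \eqref{ff6}, which you do not invoke. You should either say so explicitly, or --- as the paper does --- use the improved $L^{p'+\xi}$-regularity of $\Delta\phi$ and $\pa_\eta\phi$ from \eqref{W21_estimate}--\eqref{flux_estimate} and interpolate the $u_i$-factor between $L^1$ (Lemma~\ref{L1bound_cylinder}) and $L^p$ to obtain the $\eps$-coefficient directly.
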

\begin{proof}	
	Let $0\leq \psi \in \LStaut{p'}$ with $\|\psi\|_{\LStaut{p'}} = 1$, and $\phi$ be the solution to \eqref{dual_eq} with $T = \tau+2$. Thanks to the calculations of Lemma \ref{lem:dual1}, we can write
	\begin{equation}\label{d5}
	\begin{aligned}
		&\intMtautwo (\vat v_k)\psi\\
		&\leq \intMtautwo \phi \vat' v_k + \sumi \intQtautwo a_{(k+m_1)i}\phi \vat' u_i + \sumi \intQtautwo a_{(k+m_1)i}\phi \vat F_i(u)\\
		&\quad + \sumi \intQtautwo a_{(m+k_1)i}(\vat u_i)(d_i-1)\Delta \phi + \sumi \intMtautwo a_{(k+m_1)i}d_i(\vat u_i)\pa_{\eta}\phi\\
		&\quad - \sum_{j=1}^{k-1}\intMtautwo a_{(k+m_1)(k+j)}\phi \vat (\pa_t v_j - \delta_j\Delta_M v_j) + L_2\intMtautwo \phi \vat \sbra{\sumi u_i^{\mM} + \sumj v_j^{\mM} +1}\\
		&\quad =: (I) + (II) + (III) + (IV) + (V) + (VI) + (VII).
	\end{aligned}
	\end{equation}
	We estimate the terms on the right hand side of \eqref{d5} separately.
	\begin{itemize}
		\item {\bf Estimate $(I)$}. From Lemma \ref{dual_boundary}, there exists $q>p'$ such that
		\begin{equation*}
			\|\phi\|_{\LStaut{q}} \leq C\|\psi\|_{\LStaut{p'}} = C.
		\end{equation*}
		Therefore, by Young's inequality and $q' = \frac{q}{q-1}$ is the H\"older conjugate exponent of $q$, we have
		\begin{equation*}
			|(I)| \leq C\|\phi\|_{\LStaut{q}}\|v_k\|_{\LStaut{q'}} \leq C\|v_k\|_{\LStaut{q'}}.
		\end{equation*}
		Since $q>p' = \frac{p}{p-1}$, we have $q' < p$. Therefore, it follows from H\"older's inequality that
		\begin{align*}
			\|v_k\|_{\LStaut{q'}} &\leq \|v_k\|_{\LStaut{1}}^{\theta_0}\|v_k\|_{\LStaut{p}}^{1-\theta_0} \\
			&\leq C\|v_k\|_{\LStaut{p}}^{1-\theta_0}\leq C_\eps + \eps \|v_k\|_{\LStaut{p}},
		\end{align*}
		with $\theta_0\in (0,1)$ satisfying $\frac{1}{q'} = \frac{\theta_0}{1} + \frac{1-\theta_0}{p}$. Therefore,
		\begin{equation}\label{estI}
			|(I)| \leq C_\eps + \eps \|v_k\|_{\LStaut{p}}.
		\end{equation}
		\item {\bf Estimate $(II)$}. Similarly to the estimate of $(I)$, we can use Lemma \ref{dual_boundary} to estimate
		\begin{equation}\label{estII}
			|(II)| \leq C\sumi\|u_i\|_{\LQtaut{p}}^{1-\theta_1} \leq C_\eps + \eps\sumi\|u_i\|_{\LQtaut{p}}
		\end{equation}
		for some $\theta_1\in (0,1)$.
		\item {\bf Estimate $(III)$}. We use the condition \eqref{eq:intsum1} to find
		\begin{equation*}
			(III) \leq L_2 \intQtautwo \phi \vat \sbra{\sumi u_i^{\pO}+1}.
		\end{equation*}
		Looking at the estimate of $(B12)$ in Lemma \ref{lem:dual1}, and using $|\vat'| \leq C$, we have
		\begin{equation*}
			|(III)| \leq C\sumi \intQtautwo \phi u_i^{\pO} + C\|\phi\|_{\LQtaut{p'}} \leq C\sumi \intQtautwo \phi u_i^{\pO} + C.
		\end{equation*}
		Similarly to \eqref{c5}--\eqref{c6}, 
		\begin{equation*}
		\begin{aligned}
			\intQtautwo \phi u_i^{\pO} &\leq \|\phi\|_{\LQtaut{(p')^{\dag}}}\|u_i\|_{\LQtaut{(\pO-\beta)s}}^{\pO-\beta}\|u_i\|_{\LQtaut{p}}^{\beta}\\
			&\leq C\|u_i\|_{\LQtaut{p}}^{\beta} \leq C_\eps+ \eps\|u_i\|_{\LQtaut{p}}
		\end{aligned}
		\end{equation*}
		where $\beta$ and $s$ are in \eqref{Sigma}. Therefore,
		\begin{equation}\label{estIII}
			|(III)| \leq C_\eps + \eps\sumi \|u_i\|_{\LQtaut{p}}.
		\end{equation}
		\item {\bf Estimate $(IV)$}. Thanks to Lemma \ref{dual_boundary} we have for some $s>p'$,
		\begin{equation*}
			\|\Delta \phi\|_{\LQtaut{s}} \leq C\|\psi\|_{\LStaut{p'}} \leq C.
		\end{equation*}
		Therefore, for $s' = \frac{s}{s-1}$,
		\begin{equation*}
			|(IV)| \leq C\|\Delta \phi\|_{\LQtaut{s}}\sumi\|u_i\|_{\LQtaut{s'}} \leq C\sumi \| u_i\|_{\LQtaut{s'}}.
		\end{equation*}
		Since $s>p'$, $s' < p = \frac{p'}{p'-1}$. Therefore, by interpolation inequality
		\begin{align*}
			\|u_i\|_{\LQtaut{s'}} \leq \|u_i\|_{\LQtaut{1}}^{\theta_3}\|u_i\|_{\LQtaut{p}}^{1-\theta_3}\leq C\|u_i\|_{\LQtaut{p}}^{1-\theta_3} 
		\end{align*}
		with $\theta_3\in (0,1)$ satisfying $\frac{1}{s'} = \frac{\theta_3}{1} + \frac{1-\theta_3}{p}$. From that we obtain
		\begin{equation}\label{estIV}
			|(IV)| \leq C\sumi \|u_i\|_{\LQtaut{p-1+\pO}}^{1-\theta_3}\leq C_\eps + \eps\sumi \|u_i\|_{\LQtaut{p}}.
		\end{equation}
		\item {\bf Estimate $(V)$}. From \eqref{flux_estimate} in Lemma \ref{dual_boundary} we have for $\xi = \frac{p'}{n+1}$
		\begin{equation*}
			\|\pa_{\eta}\phi\|_{\LStaut{p'+\xi}} \leq C\|\psi\|_{\LStaut{p'}} \leq C.
		\end{equation*}
		Therefore, with $s = \frac{p'+\xi}{p'+\xi - 1} = \frac{p+\xi(p-1)}{p+(p-1)(\xi-1)} < p$, we can estimate
		\begin{equation}\label{estV}
		\begin{aligned}
		 |(V)| &\leq C\sumi \|u_i\|_{\LStaut{\frac{p'+\xi}{p'+\xi - 1}}}\|\pa_{\eta}\phi\|_{\LStaut{p'+\xi}}\\
			&\leq C\sumi \|u_i\|_{\LStaut{\frac{p'+\xi}{p'+\xi - 1}}}\\
			&\leq C\sumi\|u_i\|_{\LStaut{1}}^{\theta_4}\|u_i\|_{\LStaut{p}}^{1-\theta_4} \quad \bra{\text{ with } \frac{1}{\frac{p'+\xi}{p'+\xi - 1}} = \frac{\theta_4}{1} + \frac{1-\theta_4}{p}}\\
			&\leq C_\eps + \eps \sumi\|u_i\|_{\LStaut{p}}.
		\end{aligned}
		\end{equation}
		
		\item {\bf Estimate $(VI)$}. By integration by parts,
		\begin{equation*}
			(VI) = \sum_{j=1}^{k-1}\intMtautwo \sbra{a_{(k+m_1)(k+j)}v_j\phi \vat' + v_j\vat \bra{(\delta_j-\delta_k)\Delta_M\phi + \psi}}.
		\end{equation*}
		Similar to the estimate of $(I)$ above
		\begin{align*}
			\abs{\sum_{j=1}^{k-1}\intMtautwo a_{(k+m_1)(k+j)}v_j\phi \vat'} &\leq C\sum_{j=1}^{k-1}\|v_j\|_{\LStaut{p}}^{1-\theta_5}\\
			&\leq C_\eps + \eps \sumj \|v_j\|_{\LStaut{p}}
		\end{align*}
		for some $\theta_5\in (0,1)$. By H\"older's inequality and $$\|\Delta_{M} \phi\|_{\LStaut{p'}} \leq C\|\psi\|_{\LStaut{p'}} \leq C,$$
		we get
		\begin{equation*}
		\begin{aligned}
			&\abs{\sum_{j=1}^{k-1}\intMtautwo v_j\vat \sbra{(\delta_j-\delta_k)\Delta_M\phi + \psi}}\\
			&\leq C\sum_{j=1}^{k-1}\|\vat v_j\|_{\LStaut{p}}\bra{\|\Delta_M\phi\|_{\LStaut{p'}} + \|\psi\|_{\LStaut{p'}}}\\
			&\leq C\sum_{j=1}^{k-1}\|\vat v_j\|_{\LStaut{p}}.
		\end{aligned}
		\end{equation*}
		Therefore, we have
		\begin{equation}\label{estVI}
			|(VI)|\leq C_\eps + \eps\sumj\|v_j\|_{\LStaut{p}}+ C\sum_{j=1}^{k-1}\|\vat v_j\|_{\LStaut{p}}.
		\end{equation}
		
		\item {\bf Estimate $(VII)$}. We use similar estimates to that of $(B3)$ in \eqref{B33}--\eqref{B3}. More precisely, with $\alpha$ and $r$ are in \eqref{ff11}--\eqref{ff12} we have 
		\begin{equation*}
			\|u_i\|_{\LStaut{(\mM-\alpha)r}}^{\mM -\alpha} \leq C\|u_i\|_{\LStaut{\Lam+\vk}}^{\mM - \alpha} \leq C.
		\end{equation*}
		From Lemma \ref{dual_boundary},
		\begin{equation*}
			\|\phi\|_{\LStaut{(p')^*}} \leq C\|\psi\|_{\LStaut{p'}} \leq C.
		\end{equation*}
		Therefore
		\begin{align*}
			\sumi\intMtautwo \phi u_i^{\mM} &\leq \sumi\|\phi\|_{\LStaut{(p')^*}}\|u_i\|_{\LStaut{(\mM-\alpha)r}}^{\mM-\alpha}\|u_i\|_{\LStaut{p}}^{\alpha}\\
			&\leq C\sumi\|u_i\|_{\LStaut{p}}^{\alpha}\\
			&\leq C_\eps + \eps\sumi \|u_i\|_{\LStaut{p}}.
		\end{align*}
		Similarly,
		\begin{equation*}
			\sumi \intMtautwo \phi v_j^{\mM} \leq C\sumj\|v_j\|_{\LStaut{p}}^{\alpha} \leq C_\eps + \eps\sumj\|v_j\|_{\LStaut{p}}.
		\end{equation*}
		Finally,
		\begin{equation*}
			L_2\intMtautwo \phi \vat \leq C\|\phi\|_{\LStaut{p'}} \leq C.
		\end{equation*}
		Therefore,
		\begin{equation}\label{estVII}
			|(VII)| \le C_\eps + \eps\sumi \|u_i\|_{\LStaut{p}} + \eps\sumj\|v_j\|_{\LStaut{p}}.
		\end{equation}
	\end{itemize}
	Applying all the estimates of $(I)$ to $(VII)$ in \eqref{estI}, \eqref{estII}, \eqref{estIII}, \eqref{estIV}, \eqref{estV}, \eqref{estVI}, \eqref{estVII} into \eqref{d5} we obtain
	\begin{equation*}
	\begin{aligned}
		\intMtautwo (\vat v_k)\psi&\leq C_\eps + C_\eps\sum_{j=1}^{k-1}\|\vat v_j\|_{\LStaut{p}}\\ &+\eps\sumi\bra{\|u_i\|_{\LQtaut{p}} + \sumi \|u_i\|_{\LStaut{p}}} +  \eps\sumj\| v_j\|_{\LStaut{p}}.
	\end{aligned}
	\end{equation*}
	From this we get the estimate \eqref{d6} due to duality. Finally \eqref{d6_1} follows from \eqref{d6} by induction.
\end{proof}

We are now ready to show the uniform-in-time bound in Theorem \ref{thm2}.
\begin{theorem}[\eqref{O}-\eqref{D}-\eqref{F0}-\eqref{F1}-\eqref{F2}-\eqref{F3}-\eqref{eq:intsum1}-\eqref{eq:intsum2}-\eqref{pOpM}-\eqref{mM_general}-\eqref{quasi-uniform}] Assume that $L<0$ or $L=K=0$ in \eqref{F2}. \label{thm3}
	The global solution to \eqref{eq:mainsys} is bounded uniformly in time, i.e.
	\begin{equation*}
	\sup_{i=1,\ldots, m_1}\sup_{j=1,\ldots, m_2}\sup_{t\geq 0}\sbra{\|u_i(t)\|_{L^\infty(\Omega)} + \|v_j(t)\|_{L^\infty(M)}} <+\infty.
	\end{equation*}
\end{theorem}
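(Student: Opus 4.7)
The plan is to repeat the bootstrap argument of Theorem \ref{thm2} on each shifted cylinder $Q_{\tau,\tau+2}$, $M_{\tau,\tau+2}$, exploiting the crucial fact that every constant in Lemmas \ref{L1bound_cylinder}, \ref{Lp_cylinder}, \ref{lem4}, and \ref{lem2} is independent of $\tau\in\mathbb{N}$. The uniform-in-time $L^1$ bound from Lemma \ref{L1bound_cylinder} and the uniform-in-$\tau$ $L^{\Lam+\gamma}$ bound from Lemma \ref{lem4} serve as the starting point.

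First I would fix $p\geq 2$ and close an inductive loop at level $p$. Applying Lemma \ref{elementary} to the family of inequalities \eqref{d6} in Lemma \ref{lem2} (indexed by $k=1,\ldots,m_2$), I obtain
\begin{equation*}
\sumj \|\vat v_j\|_{\LStaut{p}} \le C_\eps + \eps\sumi\bra{\|u_i\|_{\LQtaut{p}}+\|u_i\|_{\LStaut{p}}} + \eps\sumj\|v_j\|_{\LStaut{p}}.
\end{equation*}
Combining this with \eqref{ff6} of Lemma \ref{Lp_cylinder} and using $\vat\geq 0$, $\vat\equiv 1$ on $[\tau+1,\tau+2]$ (so that $\|u_i\|_{\LQtau{p}} \le \|\vat u_i\|_{\LQtaut{p}}$ and analogously on the boundary), I obtain
\begin{equation*}
\sumi\bra{\|u_i\|_{\LQtau{p}}+\|u_i\|_{\LStau{p}}}+\sumj\|v_j\|_{\LStau{p}} \le C_\eps + C\eps\bra{\sumi\bra{\|u_i\|_{\LQtaut{p}}+\|u_i\|_{\LStaut{p}}}+\sumj\|v_j\|_{\LStaut{p}}}.
\end{equation*}
Setting $y_\tau$ equal to the left-hand side, this reads $y_\tau \le C_\eps + C\eps(y_\tau + y_{\tau+1})$. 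Choosing $\eps$ small enough, restricting to $\tau\in\mathscr{N}:=\{\tau:y_\tau\le y_{\tau+1}\}$ gives $y_\tau\le C$ on $\mathscr{N}$, and then Lemma \ref{lem:elementary} combined with the finiteness of $y_0$ (guaranteed by Theorem \ref{thm2}) yields $y_\tau \le C$ for every $\tau\in\mathbb{N}$, with $C$ independent of $\tau$ and depending only on $p$.

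Having uniform-in-$\tau$ bounds of $u_i$ and $v_j$ in $L^p$ on the unit cylinders for every finite $p$, the polynomial growth assumption \eqref{F3} translates this into uniform-in-$\tau$ bounds of $F_i(u)$ in $\LQtaut{p}$ and of $G_i(u,v)$, $H_j(u,v)$ in $\LStaut{p}$ for every $p<\infty$. The final step is to apply the regularizing effect of the heat operator with inhomogeneous Neumann data on $\Omega$ and of the heat semigroup on $M$ to the truncated system \eqref{shifted_sys}--\eqref{zero_initial}, exactly as in the proof of Theorem \ref{thm2}. Because each truncated cylinder has fixed length $2$ and zero initial data, the maximal regularity constants (see Lemma \ref{mr_constant} and \cite{nittka2014inhomogeneous}) are independent of $\tau$, so one concludes $\|\vat u_i\|_{L^\infty(Q_{\tau,\tau+2})}+\|\vat v_j\|_{L^\infty(M_{\tau,\tau+2})}\le C$ uniformly in $\tau$, hence in particular $\sup_{t\ge \tau+1}(\|u_i(t)\|_{L^\infty(\Omega)}+\|v_j(t)\|_{L^\infty(M)})\le C$ for every $\tau$.

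The main technical point, and the only one that requires care, is verifying the $\tau$-independence of the bootstrap constants at each stage; this is automatic since the dual problem \eqref{dual_eq} in Lemma \ref{dual_boundary} is posed on a cylinder of length $T-\tau=2$ and the estimates therein depend only on this length. The argument on the initial cylinder $[0,1]$ is covered by Theorem \ref{thm2} (which provides the required finite $L^\infty$ bound up to $t=1$), so combining this with the uniform bound on $[\tau+1,\infty)$ for $\tau\geq 0$ completes the proof.
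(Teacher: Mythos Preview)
Your proposal is correct and follows essentially the same route as the paper: combine \eqref{ff6} with \eqref{d6_1} (which you re-derive from \eqref{d6} via Lemma \ref{elementary}), pass to a recursion in $\tau$, invoke Lemma \ref{lem:elementary}, and finish with parabolic regularization on the truncated cylinders. One small slip: since $\vat\equiv 1$ on $[\tau+1,\tau+2]$ (not on $[\tau,\tau+1]$), the inequality $\|u_i\|_{\LQtau{p}}\le\|\vat u_i\|_{\LQtaut{p}}$ is false as written; what actually follows is a bound on $Q_{\tau+1,\tau+2}$, so the recursion reads $y_{\tau+1}\le C_\eps + C\eps(y_\tau+y_{\tau+1})$ rather than $y_\tau\le\ldots$ --- this is harmless after a shift of index (and the paper itself commits the same slip in Lemma \ref{lem4}).
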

\begin{proof}
	We claim that, for any $2\leq p$, there exists a constant $C_p>0$ such that
	\begin{equation}\label{claim}
		\sumi \bra{\|u_i\|_{\LQtau{p}} + \|u_i\|_{\LStau{p}}} + \sumj\|v_j\|_{\LStau{p}} \leq C_p \quad \text{ for all } \quad \tau\in \mathbb N.
	\end{equation}
	Indeed, from \eqref{ff6} in Lemma \ref{Lp_cylinder} and \eqref{d6_1} in Lemma \ref{lem2}, we get for any $\eps>0$ a constant $C_\eps>0$ such that
	\begin{align*}
		&\sumi\bra{\|\vat u_i\|_{\LQtaut{p}} + \|\vat u_i\|_{\LStaut{p}}} + \sumj\|\vat v_j\|_{\LStaut{p}} \\
		&\leq C_\eps + \eps\sbra{\sumi\bra{\|u_i\|_{\LQtaut{p}} + \|u_i\|_{\LStaut{p}}} + \sumj\|v_j\|_{\LStaut{p}}}.
	\end{align*}
	Using the same arguments as at the end of the proof of Lemma \ref{lem4}, we obtain \eqref{claim}.
	
	\medskip
	Now we can use \eqref{claim} in the truncated system \eqref{shifted_sys}, with $p$ is large enough, to conclude that there exists $C_\infty>0$ independent of $\tau\in \mathbb N$ such that
	\begin{equation*}
		\sumi\|u_i\|_{\LQtau{\infty}} + \sumj\|v_j\|_{\LStau{\infty}} \leq C_\infty \quad \text{ for all } \quad \tau\in \mathbb N,
	\end{equation*}
	which finishes the proof of Theorem \ref{thm3}.
\end{proof}

\subsection{Proof of Theorem \ref{main:thm0}}\label{sec:4.3}
\begin{proof}[Proof of Theorem \ref{main:thm0}]
	Thanks to Theorem \ref{thm:main1}, it's sufficient to show that \eqref{quasi-uniform} always holds for $\Lam = 2$. Indeed, in this case $\Lam' = 2$. We only need to show that 
	\begin{equation}\label{claim3}
	C_{\mr,2}^M \leq 1.
	\end{equation}
	To do that, we multiply the equation \eqref{parabolic_M} by $-\Delta_M\uu$ in $L^2(\Omega)$, to get
	\begin{equation*}
	\frac{1}{2}\frac{d}{dt}\|\na_M\uu\|_{\LM{2}}^2 + \|\Delta_M \uu\|_{L^2(\Omega)}^2 = -\intM \ff \Delta_M\uu \leq \frac 12\|\ff\|_{\LM{2}}^2 + \frac 12\|\Delta_M \uu\|_{\LM{2}}^2,
	\end{equation*}
	implying
	\begin{equation*}
	\frac{d}{dt}\|\na_M\uu\|_{\LM{2}}^2 + \|\Delta_M \uu\|_{\LM{2}}^2 \leq \|\ff\|_{\LM{2}}.
	\end{equation*}
	Integrating this on $(0,T)$ and using $\uu(\cdot,0) = 0$ we obtain 
	\begin{equation*}
	\|\na_M\uu(\cdot,T)\|_{\LM{2}}^2 + \|\Delta_M\uu\|_{\LS{2}}^2 \leq \|\ff\|_{\LS{2}}^2,
	\end{equation*}
	which implies
	\begin{equation*}
	\|\Delta_M\uu\|_{\LS{2}} \leq \|\ff\|_{\LS{2}},
	\end{equation*}
	and hence, the desired estimate \eqref{claim3}.
\end{proof}

\subsection{Proof of Theorem \ref{main:thm2}}\label{sec:4.4}
\begin{proof}[Proof of Theorem \ref{main:thm2}]
	First, thanks to Lemmas \ref{prepare} and \ref{lemma:Loft}, we have, similarly to \eqref{eps_critical} and \eqref{ff4}, for any $p>1$ and $\eps>0$ a constant $K_{p,\eps>0}$ such that
	\begin{equation*}
		\sumi\bra{\|u_i\|_{\LQ{p}} + \|u_i\|_{\LS{p}}} \leq K_{p,\eps}(1+T) + \eps\sumj\|v_j\|_{\LS{p}}.
	\end{equation*}
	Following Lemma \ref{lem:dual1}, we will show that for any $\eps>0$, and any $k\in \{1,\ldots, m_2\}$, there exists $C_{T,\eps}>0$ such that
	\begin{equation}\label{gg4}
		\|v_k\|_{\LS{p}} \leq C_{T,\eps} + C_T\sum_{j=1}^{k-1}\|v_j\|_{\LS{p}} + \eps\sumj \|v_j\|_{\LS{p}}.
	\end{equation}
	We consider the duality equation \eqref{dual_eq} with $0\leq \psi \in \LS{p'}$ satisfying $\|\psi\|_{\LS{p'}} = 1$. The same integration by parts arguments in Lemma \ref{lem:dual1} gives
	\begin{equation}\label{gg5}
	\begin{aligned}
	\intMT v_k\psi &\leq \intM v_{k,0}\phi(0) + \sumi a_{(k+m_1)i}\intO u_{i,0}\phi(0) + L_2\intQT \phi\sbra{\sumi u_i^{\pO}+1}\\
	&\quad + \sumi a_{(k+m_1)i}d_i\intMT u_i\pa_{\eta}\phi + \sumi a_{(k+m_1)i}\intQT u_i\sbra{\pa_t\phi + d_i\Delta \phi}\\
	&\quad +\sum_{j=1}^{k-1}a_{(k+m_1)(k+j)}\intM v_{j,0}\phi(0) - \sum_{j=1}^{k-1}\intMT v_j\sbra{\pa_t\phi + \delta_j\Delta_M\phi}\\
	&\quad + L_2\intMT \phi\sbra{\sumi u_i^{\mM} + \sumj v_j^{\mM}+1}.
	\end{aligned}
	\end{equation}
	All the terms on the right hand side of \eqref{gg5} can be estimated similarly to Lemma \ref{lem:dual1} except for two sums
	\begin{equation*}
		(\mathsf A) = \sumi\intQT \phi u_i^{\pO} \quad \text{ and } \quad (\mathsf B) = \sumi\intMT \phi u_i^{\mM} + \sumj\intMT \phi v_j^{\mM}.
	\end{equation*}
	To estimate ($\mathsf A$) we first use Lemma \ref{dual_boundary} to have
	\begin{equation*}
		\|\phi\|_{\LQ{(p')^\dag}} \leq C_T\|\psi\|_{\LQ{p'}} = C_T.
	\end{equation*} 
	By H\"older's inequality
	\begin{align}\label{gg6}
		\int_0^T\intO \phi u_i^{\pO - \alpha} u_i^{\alpha} \leq \|\phi\|_{\LQ{(p')^{\dag}}}\|u_i\|_{\LQ{(\pO-\alpha)s}}^{\pO - \alpha}\|u_i\|_{\LQ{p}}^{\alpha} \leq C_T\|u_i\|_{\LQ{(\pO-\alpha)s}}^{\pO - \alpha}\|u_i\|_{\LQ{p}}^{\alpha}
	\end{align}
	with
	\begin{equation*}
		\frac{1}{(p')^{\dag}} + \frac 1s + \frac{\alpha}{p}=1.
	\end{equation*}
	From this
	\begin{align*}
		\frac 1s = 1 - \frac \alpha p - \frac{1}{(p')^{\dag}}
		 = 1 - \frac \alpha p - \frac{n+1-2p'}{(n+2)p'}
		 = \frac{n(1-\alpha) + 3p + 1 - 2\alpha}{(n+2)p}
	\end{align*}
	or equivalently
	\begin{equation*}
		s = \frac{(n+2)p}{n(1-\alpha) + 3p + 1 - 2\alpha}.
	\end{equation*}
	Note that as $p\to\infty$, $s \to \frac{n+2}{3}$. Since 
	\begin{equation*}
		\pO < 1 + \frac{3a}{n+2}
	\end{equation*}
	we can choose $\alpha \in (0,1)$ and $p$ large enough such that
	\begin{equation*}
		(\pO - \alpha)s \leq \frac{3a}{n+2}\frac{n+2}{3} = a.
	\end{equation*}
	We then use this and the assumption $\|u_i\|_{L^\infty(0,T;\LO{a})} \leq \F(T)$ in \eqref{gg6} to get
	\begin{equation*}
		\intQT \phi u_i^{\pO} \leq C_T\F(T)^{\pO-\alpha}\|u_i\|_{\LQ{p}}^{\alpha} \leq C_{T,\eps} + \eps\|u_i\|_{\LQ{p}},
	\end{equation*}
	which gives the estimate %for ($\mathsf A$),
	\begin{equation*}
		(\mathsf A) \leq C_{T,\eps} + \eps\sumi\|u_i\|_{\LQ{p}}.
	\end{equation*}
	The estimate of ($\mathsf B$) can be done in the same way as for $(B3)$ in the proof of Lemma \ref{lem:dual1}, where the integrability $b$ plays the role of $\Lam+\vk$ therein, so we omit it here. Ultimately, we have for any $\eps>0$ a constant $C_{T,\eps}>0$ such that
	\begin{equation*}
		(\mathsf B) \leq C_{T,\eps} + \sumi \|u_i\|_{\LS{p}} + \sumj\|v_j\|_{\LS{p}}.
	\end{equation*}
	With the estimates of $(\mathsf A)$ and $(\mathsf B)$, one can proceed from \eqref{gg5} the same way in Lemma \ref{lem:dual1} to finally obtain \eqref{gg4}. Lemma \ref{elementary} then gives
	\begin{equation*}
		\sumj \|v_j\|_{\LS{p}} \leq C_T
	\end{equation*}
	and consequently
	\begin{equation*}
		\sumi\bra{\|u_i\|_{\LQ{p}} + \|v_j\|_{\LS{p}}} \leq C_T
	\end{equation*}
	for all $p\geq 1$. This is enough to conclude the global existence of solutions to \eqref{eq:mainsys}. The uniform-in-time bound is obtained by using the truncated function $\varphi_\tau$ and working on each cylinder $Q_{\tau,\tau+2}$ for $\tau\in \mathbb N$. We omit the details since they are very similar to that of Section \ref{sec:uniform-in-time}.
\end{proof}

\section{Applications}\label{sec:application}
In this section, we show the application of our theorems to some models arising from cell biology. It's worth emphasizing that, all known results for volume-surface systems seem not applicable to these systems.

\subsection{Membrane protein clustering}
In a recent work of Lucas M. Stolerman et al \cite{stolerman2019stability}, the authors focus on stability analysis of a bulk-surface reaction-diffusion model of membrane protein clustering. In their model, $U$ represents a volume component which diffuses in the cytoplasm. It binds with the membrane and forms a surface monomer $A_1$ via a reaction flux (see below). Then subsequent oligomerization at the membrane is given by $A_{j-1}+A_1 \rightleftharpoons A_j$ for $j=2,3,...,N$. If we model the cytoplasm as a smooth bounded region $\Omega \subset \R^n$, and the membrane (boundary of $\Omega$) by $M$, then the model in \cite{stolerman2019stability} has the form
\begin{equation}
\begin{cases}
\frac{\partial u}{\partial t}=d\Delta u, & \text{ on }\Omega\times(0,T)\\
d\frac{\partial u}{\partial\eta}=F(u,v_1,...,v_N), & \text{ on }M\times(0,T)\\
\frac{\partial v_1}{\partial t}=\delta_{1}\Delta_M v_{1}+G_{1}(u,v_1,...,v_N), & \text{ on }M\times(0,T)\\
\vdots&\\
\frac{\partial v_N}{\partial t}=\delta_{N}\Delta_M v_{N}+G_{N}(u,v_1,...,v_N), & \text{ on }M\times(0,T)\\
u=u_0 & \text{ on }\overline{\Omega}\times{0}\\
v=v_0 & \text{ on }M\times{0}
\end{cases}\label{eq:example1}
\end{equation}
Here, $d$ and $\delta_i$ are positive diffusion coefficients, for $i=1,...,N$, $u$ represents the cytoplasm concentration density of $U$, and $v_1,...,v_N$ represent the membrane concentration densities of $A_1,...,A_N$, $u_0$ and the vector $v_0=(v_{0_i})$ represent bounded nonnegative initial data, and $k_0$, $k_b$, $k_d$ and $k_2,...,k_N$ are positive constants. Also, 
\begin{align*}
F(u,v_1,...,v_N)&=-(K_0+k_bv_N)u+k_dv_1\\
G_1(u,v_1,...,v_N) & =(K_0+k_bv_N)u-k_dv_1-2k_mv_1^2+2k_2v_2
-k_gv_1\sum_{l=1}^{N-1}v_l+\sum_{j=3}^Nk_jv_j\\
G_2(u,v_1,...,v_N)&=k_mv_1^2-k_gv_1v_2-k2v_2+k_3v_3\\
G_j(u,v_1,...,v_N)&=k_gv_1v_{j-1}-k_gv_1v_j-k_jv_j+k_{j+1}v_{j+1},\text{ for }j=3,...,N-1\\
G_N(u,v_1,...,v_N)&=k_gv_1v_{N-1}-k_Nv_N
\end{align*}
Note that the nonlinearities $F$ and $G_j$, $j=1,\ldots, N$, are locally Lipschitz continuous,
\begin{equation}\label{eq:qpf}
F(0,v_1,...,v_N)\ge 0\text{ for all }(v_1,...,v_N) \in \R_+^N
\end{equation} 
and
\begin{equation}\label{eq:qpG}
G_i(u,v_1,...,v_N)\ge 0\text { for all }u,v_1,...,v_N\ge 0\text{ with }v_i=0\text{ for }i=1,...,N,
\end{equation}
and they are polynomial. That means the assumptions \eqref{F0}, \eqref{F1}, \eqref{F3} are fulfilled. 
It's also simple to check that
\begin{equation}\label{eq:minibal}
F(u,v_1,...,v_N)+\sum_{j=1}^NjG_j(u,v_1,...,v_N)=0,\text{ for all }u,v_1,...,v_N\ge 0,
\end{equation}
and thus \eqref{F2} is satisfied with $L = K = 0$. It remains to check the intermediate sum conditions \eqref{eq:intsum1} and \eqref{eq:intsum2}, i.e. there is $(N+1)\times(N+1)$ lower triangular matrix $A=(a_{i,j})$ with positive entries on the diagonal, and nonnegative entries below the diagonal, and constants $K_1,K_2\ge 0$ so that
\begin{equation}\label{eq:miniintsum}
A\begin{bmatrix}F(u,v_1,...,v_N)\\G_1(u,v_1,...,v_N)\\\vdots\\G_N(u,v_1,...,v_N)\end{bmatrix}\le K_1\vec{1}\left(u+\sum_{j=1}^n v_j+K_2\right),\text{ for all }u,v_1,...,v_N\ge 0.
\end{equation} 
Indeed, this is done by choosing $K_1=2\max_{{i=2,...,m}}k_i$, $K_2=0$,  and
\[A=\begin{bmatrix}
1&0&\cdots&0\\
1&1&\cdots&0\\
\vdots&\vdots&\ddots&\vdots\\
1&\cdots&1&1
\end{bmatrix}.
\]
Since all the assumptions in Theorem \ref{main:thm0} are satisfied, we have the following
\begin{theorem}\label{application1}
	For any non-negative initial data $(u_0,v_0)\in W^{2-2/p}(\Omega)\times (W^{2-2/p}(M))^N$ for $p>n$ satisfying the compatibility condition
	\begin{equation*}
		d\pa_{\eta}u_0 = F(u_0, v_{1,0}, \ldots, v_{N,0}) \quad \text{ on }\quad M,
	\end{equation*}
	there exists a unique global classical solution to \eqref{eq:example1} which is uniformly bounded in time, i.e.
	\begin{equation*}
	\sup_{t\geq 0}\bra{\|u(t)\|_{\LO{\infty}} + \sum_{j=1}^N\|v_j(t)\|_{\LM{\infty}}} < +\infty.
	\end{equation*}
\end{theorem}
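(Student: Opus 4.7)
The plan is to verify each hypothesis of Theorem \ref{main:thm0} for the system \eqref{eq:example1} and then invoke it directly. In the notation of the general framework \eqref{eq:mainsys} I identify $m_1 = 1$ with the single volume species $u$ (whose volume reaction is $F_1 \equiv 0$), $m_2 = N$ with the surface species, the boundary flux $F$ of \eqref{eq:example1} with the general $G_1$, and the surface reactions $G_1,\ldots,G_N$ of \eqref{eq:example1} with $H_1,\ldots,H_N$.

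I would first dispose of the routine items: \eqref{O} and \eqref{D} are part of the problem setting; \eqref{F0} and \eqref{F3} hold because all nonlinearities are polynomials; \eqref{F1} is the content of \eqref{eq:qpf}--\eqref{eq:qpG}; and \eqref{eq:minibal} gives \eqref{F2} with $L = K = 0$, which is moreover the very hypothesis that unlocks the uniform-in-time conclusion in Theorem \ref{main:thm0}. The smoothness of the initial data and the compatibility condition \eqref{compatibility} are assumed in the statement of Theorem \ref{application1}.

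The substantive step is the intermediate sum conditions. Condition \eqref{eq:intsum1} holds trivially with $p_\Omega = 1$ since $F_1 \equiv 0$. For \eqref{eq:intsum2} I take the $(N+1) \times (N+1)$ lower triangular matrix $A$ of all ones displayed after \eqref{eq:miniintsum}; its $k$th row applied to $(F, G_1, \ldots, G_N)^\top$ returns the partial sum $F + G_1 + \cdots + G_{k-1}$. The key claim to verify by direct computation is that in each of these partial sums the quadratic contributions $\pm k_m v_1^2$ and $\pm k_g v_1 v_j$ combine with nonpositive net coefficients, leaving only a linear upper bound in $v_1, \ldots, v_N$ (plus a constant). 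This yields $p_M = m_M = 1$, after which the exponent restrictions \eqref{pOpM}--\eqref{mM} become $1 < 1 + 2/n$, $1 < 1 + 1/n$ and $1 \leq 1 + 4/(n+1)$, all of which hold.

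The one nonroutine point, and therefore the main obstacle, is the quadratic-term bookkeeping: a naive termwise upper bound would force $m_M = 2$, which by \eqref{mM} would restrict the conclusion to $n \leq 3$, whereas exploiting the telescoping cascade structure of the oligomerization reactions $A_{j-1} + A_1 \rightleftharpoons A_j$ is what produces the sharper $m_M = 1$ valid in every dimension. Once this cancellation is confirmed, Theorem \ref{main:thm0} together with its $L = K = 0$ clause delivers immediately both the unique global classical solution and the uniform-in-time $L^\infty$ bound asserted in Theorem \ref{application1}.
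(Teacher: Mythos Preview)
Your proposal is correct and follows essentially the same route as the paper: you verify \eqref{F0}--\eqref{F3} (with $L=K=0$ from \eqref{eq:minibal}) and then check the intermediate sum condition using the same lower-triangular matrix $A$ of all ones, obtaining a linear upper bound so that $p_\Omega=p_M=\mu_M=1$ and Theorem~\ref{main:thm0} applies directly. Your emphasis on the telescoping cancellation of the quadratic terms $\pm k_m v_1^2$ and $\pm k_g v_1 v_j$ (and you should also note the $\pm k_b u v_N$ pair in $F$ and $G_1$) is exactly the point the paper records in \eqref{eq:miniintsum} with $K_1=2\max_i k_i$, though the paper states this without writing out the partial-sum computation.
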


\subsection{Activation of Cdc42 in cell polarization}
This second example is from the recent paper \cite{borgqvist2020cell} where the authors derive a mathematical model for the activation of Cdc42 in cell polarization. The system reads as\footnote{The model considered herein is slightly different from that of \cite{borgqvist2020cell}, where $Q(A,I,G) = k_1G(k_{\max}-(A+I))$, since we take into account the saturation. It's noted that their choice of nonlinearity might lead to {\it negative} concentrations, for instance, when $I_0 \equiv 0$, $\beta \geq A_0(x) > k_{\max}+1$ for $x\in\Omega$, and $G_0(x)\geq \alpha>0$ for a large enough constant $\alpha$.}
\begin{equation}\label{eq:example2}
\begin{cases}
	\pa_t G = D_G\Delta G, &(x,t)\in Q_T,\\
	D_G\pa_\eta G = Q(A,I,G), &(x,t)\in M_T,\\
	\pa_tA = D_A\Delta_MA + F(A,I), &(x,t)\in M_T,\\
	\pa_t I = D_I\Delta_M I + H(A,I,G), &(x,t)\in M_T,\\
	G(x,0) = G_0(x), &x\in\Omega,\\
	A(x,0) = A_0(x), \; I(x,0) = I_0(x), &x\in M,
\end{cases}
\end{equation}
where
\begin{equation*}
	Q(A,I,G) = -k_1G(k_{\max} - (A+I))_+ + k_{-1}I, F(A,I) = k_2I - k_{-2}A + k_3A^2I
\end{equation*}
and
\begin{equation*}
	H(A,I,G) = -F(A,I)-Q(A,I,G),
\end{equation*}
with positive constant rates $k_1, k_{-1}, k_2, k_{-2}, k_{\max}$. Here $G$, $A$, $I$ are concentrations of the GTP-, GDP-, and GDI-bound form respectively. The interested reader is referred to \cite{borgqvist2020cell} for more details. It's easy to check that all the assumptions \eqref{F0}, \eqref{F1}, \eqref{F2} (with $L=K=0$) and \eqref{F3} are fulfilled. It's also clear that
\begin{equation*}
\begin{bmatrix}
	1 & 0 & 0\\
	1 & 1 & 0\\
	1 & 1 & 1
\end{bmatrix}
\begin{pmatrix} Q\\ H \\ F \end{pmatrix} \le \begin{pmatrix} k_{-1}I\\ k_{-2}A\\ 0\end{pmatrix},
\end{equation*}
which means that the intermediate sum condition \eqref{eq:intsum2} is satisfied (condition \eqref{eq:intsum1} is trivially fulfilled since we have no nonlinearities for the equation of $G$). Therefore Theorem \ref{main:thm0} applies and we have the following result.
\begin{theorem}
	For any non-negative initial data $(G_0, A_0, I_0)\in W^{2-2/p}(\Omega)\times(W^{2-2/p}(M))^{2}$ for $p>n$ satisfying the compatibility condition
	\begin{equation*}
		D_G\pa_{\eta}G_0 = Q(A_0,I_0,G_0) \quad \text{ on } \quad M,
	\end{equation*}
	there exists a unique global classical solution to \eqref{eq:example2} which is bounded uniformly in time,
	\begin{equation*}
		\sup_{t\ge 0}\bra{\|G(t)\|_{\LO{\infty}} + \|A(t)\|_{\LM{\infty}} + \|I(t)\|_{\LM{\infty}}} < +\infty.
	\end{equation*}
\end{theorem}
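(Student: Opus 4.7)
The plan is to verify all the hypotheses of Theorem \ref{main:thm0} for system \eqref{eq:example2} with $m_1 = 1$ (the volume species $G$) and $m_2 = 2$ (the surface species, which I will order as $v_1 = I$ and $v_2 = A$ in the abstract framework so that the three nonlinearities appear in the convenient order $(Q, H, F)$). Since the truncation $(k_{\max} - (A+I))_+$ is Lipschitz continuous, all the nonlinearities are locally Lipschitz, giving \eqref{F0}, and since they are polynomial, \eqref{F3} is immediate. Quasi-positivity \eqref{F1} can be checked directly: $Q(A,I,0) = k_{-1} I \ge 0$, $F(0,I) = k_2 I \ge 0$, and using $H = -F - Q$ one finds $H(A,0,G) = k_{-2} A + k_1 G (k_{\max} - A)_+ \ge 0$, where the nonnegativity of the positive part is essential. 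For mass control \eqref{F2} I would take $a_1 = b_1 = b_2 = 1$: there is no volume reaction so the first inequality is trivial, while the defining identity $Q + F + H = 0$ yields the second inequality with $L = K = 0$.

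The only genuinely nontrivial step is the intermediate sum condition. Since there is no volume reaction, \eqref{eq:intsum1} holds trivially with any $\pO \ge 1$. For \eqref{eq:intsum2}, I would use the lower-triangular matrix
\[
A \;=\; \begin{bmatrix} 1 & 0 & 0 \\ 1 & 1 & 0 \\ 1 & 1 & 1 \end{bmatrix}
\qquad \text{acting on} \qquad \begin{pmatrix} Q \\ H \\ F \end{pmatrix}.
\]
A direct calculation gives the three successive rows $Q \le k_{-1} I$, $Q + H = -F \le k_{-2} A$, and $Q + H + F = 0$. The essential feature is the cancellation of the cubic term $k_3 A^2 I$ in the middle partial sum $H + F$; without choosing the order $(Q,H,F)$ (rather than $(Q,F,H)$), the middle row would retain this cubic and destroy the linear bound. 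Consequently \eqref{eq:intsum2} is satisfied with $\pM = \mM = 1$, comfortably within the ranges required by \eqref{pOpM} and \eqref{mM} in every spatial dimension.

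With every hypothesis of Theorem \ref{main:thm0} verified, and with $L = K = 0$ in \eqref{F2}, an application of that theorem yields both the unique global classical solution and the uniform-in-time $L^\infty$ bound claimed. I anticipate no conceptual obstacle; the only points of care are the Lipschitz property of the truncation $(k_{\max} - (A+I))_+$ needed for \eqref{F0}, and the correct matching of the ordering of surface species with the matrix $A$ so that the problematic $k_3 A^2 I$ term is absorbed in a partial sum rather than appearing on its own.
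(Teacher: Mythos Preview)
Your proposal is correct and follows essentially the same argument as the paper: the paper likewise verifies \eqref{F0}--\eqref{F3} with $L=K=0$, notes that \eqref{eq:intsum1} is trivial since there is no volume reaction, and checks \eqref{eq:intsum2} via the same lower-triangular matrix acting on $(Q,H,F)^T$ to obtain the row bounds $k_{-1}I$, $k_{-2}A$, and $0$, before applying Theorem~\ref{main:thm0}. Your additional remarks on the Lipschitz property of the truncation and on why the ordering $(Q,H,F)$ is needed to cancel the cubic $k_3A^2I$ are correct and more explicit than the paper's terse ``it's easy to check''.
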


\subsection{A system with better a-priori estimates}\label{Exp3}
In this section we consider a system where better a-priori estimates can be derived using the system's special structures, which in turn allows us to obtain global existence for higher order nonlinearities. More precisely, the system reads
\begin{equation}\label{gg1}
\begin{cases}
	\pa_t u_1 - d_1\Delta u_1 = f_1(u) = u_1^3 - u_2^3, &x\in\Omega,\\
	\pa_t u_2 - d_2\Delta u_2 = f_2(u) = -u_1^3 + u_2^3, &x\in\Omega,\\
	d_1\pa_{\eta}u_1 = g_1(u,v) = u_1 - u_2 - 2u_1^3 + u_2^2 - v_2^2, &x\in M,\\
	d_2\pa_{\eta}u_2 = g_2(u,v) = -u_1 + u_2 - u_2^3 - v_1^2, &x\in M,\\
	\pa_t v_1 - \delta_1\Delta_M v_1 = h_1(u,v) = 2u_1^3 + u_1u_2^2 - v_2^6, &x\in M,\\
	\pa_t v_2 - \delta_2 \Delta_M v_2 = h_2(u,v) = 2u_2^3 + u_1^3 - v_1^6, &x\in M,\\
	u(x,0) = u_0(x), &x\in\Omega,\\
	v(x,0) = v_0(x), w(x,0) = w_0(x), &x\in M.
\end{cases}
\end{equation}
It's obvious that
\begin{equation*}
	f_1(u) + f_2(u) \leq 0.
\end{equation*}
By using the Young's inequality
\begin{equation}\label{gg2}
	u_1u_2^2 \leq \frac{u_1^3}{3} + \frac{2u_2^3}{3}
\end{equation}
we can check that
\begin{equation*}
	2g_1(u,v) + 2g_2(u,v) + h_1(u,v) + h_2(u,v) \leq -u_1^3 + u_1u_2^2 - 2u_3^3 \leq -\frac{2}{3}u_1^3 - \frac 43 u_2^3 \leq 0.
\end{equation*}
Therefore, assumption \eqref{F2} is satisfied. It's easy to check that by choosing the matrix
\begin{equation*}
A = \begin{pmatrix}
1&0&0&0\\
1&1&0&0\\
1&1&1&0\\
1&1&1&1
\end{pmatrix},
\end{equation*}
the assumptions \eqref{eq:intsum1} and \eqref{eq:intsum2} are satisfied with
\begin{equation*}
	\pO = 3, \quad \pM = 2, \quad \text{ and } \quad \mM = 3.
\end{equation*}
Therefore, the results of Theorem \ref{main:thm0} are not applicable to obtain global existence to \eqref{gg1}. We show here that by utilizing the special structure of \eqref{gg1}, one can use Theorem \ref{main:thm2} to still get global solutions. Indeed, direct computations give
\begin{equation}\label{gg3}
\begin{aligned}
\pa_t\bra{\intO \bra{u_1^4+u_2^4} + \intM (v_1 +v_2)} + 12d_1\|u_1\na u_1\|_{\LO{2}}^2 + 12d_2\|u_2\na u_2\|_{\LO{2}}^2\\
= 4\intO (f_1(u)u_1^3 + f_2(u)u_2^3) + 4\intM \bra{g_1(u,v)u_1^3 + g_2(u,v)u_2^3} + \intM\bra{h_1(u,v) + h_2(u,v)}.
\end{aligned}
\end{equation}
We have
\begin{equation*}
	f_1(u)u_1^3 + f_2(u)u_2^3 = -(u_1^3 - u_2^3)^2 \leq 0,
\end{equation*}
and
\begin{align*}
	&4\bra{g_1(u,v)u_1^3 + g_2(u,v)u_2^3}+h_1(u,v) + h_2(u,v)\\
	&\leq 4u_1^4 -8u_1^6 + 4u_1^3u_2^2 + 4u_2^4 - 4u_2^6 + 2u_1^3 + u_1u_2^2  - v_2^6 + 2u_2^3 + u_1^3 - v_1^6 \\
	&\leq -C(u_1^6 + u_2^6+v_1^6 + v_2^6)+ C
\end{align*}
where we used \eqref{gg2} and 
\begin{equation*}
	4u_1^4 + 4u_1^3u_2^2 + 4u_2^4 + 2u_1^3 + u_1u_2^2 + 2u_2^3 + u_1^3 \leq \eps\bra{u_1^6 + u_2^6} + C_\eps
\end{equation*}
for any $\eps>0$. Thus we get from \eqref{gg3} that
\begin{align*}
	\pa_t\bra{\intO (u_1^4 + u_2^4) + \intM\bra{v_1 + v_2}}+ C\intM\bra{u_1^6 + u_2^6 + v_1^6 + v_2^6} \leq C.
\end{align*}
By integrating in time we get
\begin{equation*}
	\sup_{i=1,2}\|u_i\|_{L^\infty(0,T;\LO{4})}^4 + \sup_{i=1,2}\bra{\|u_i\|_{\LS{6}}^6 + \|v_i\|_{\LS{6}}^6} \leq CT.
\end{equation*}
Thus, Theorem \ref{main:thm2} is applicable with $a = 4$ and $b = 6$, which allows us to get global existence of \eqref{gg2} in the physical dimension $n= 3$. The uniform-in-time bound of solutions remains unclear.
\begin{theorem}\label{thm4}
	Let $n\leq 3$. For any non-negative initial data $(u_0,v_0)\in W^{2-2/p}(\Omega)^2\times W^{2-2/p}(M)^2$ satisfying the compatibility condition
	\begin{equation*}
		d_1\pa_{\eta}u_{1,0} = g_1(u_0,v_0), \quad d_2\pa_{\eta}u_{2,0} = g_2(u_0,v_0), \quad \text{ on } \quad M,
	\end{equation*}
	there exists a unique global classical solution to \eqref{gg2}.
\end{theorem}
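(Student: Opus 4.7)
The plan is to reduce the theorem to a direct application of Theorem \ref{main:thm2} with the choice of parameters $a=4$ and $b=6$. The preceding paragraphs of the paper have already identified the intermediate-sum exponents $\pO=3$, $\pM=2$, $\mM=3$ for system \eqref{gg1} using the all-ones lower-triangular matrix $A$, and have verified \eqref{F0}--\eqref{F2}. What remains is to verify (i) the a-priori estimate \eqref{gg8} with $a=4$, $b=6$, and (ii) that the exponents $(\pO,\pM,\mM)=(3,2,3)$ lie in the admissible ranges \eqref{pOpM_ab}--\eqref{mM_ab} for these values of $a,b$ and for $n\le 3$.

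For step (ii), the checks are arithmetic: with $a=4$ and $n\le 3$ we have $1+a\min\{2/n,3/(n+2)\}\ge 1+4\cdot 3/5 = 17/5 >3 =\pO$, and $1+a/n \ge 1+4/3 = 7/3 > 2 = \pM$; with $b=6$ and $n\le 3$, $1+2b/(n+1)\ge 1+12/4=4 > 3 = \mM$. Hence \eqref{pOpM_ab} and \eqref{mM_ab} are satisfied.

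For step (i), I would test the $u_i$-equations in $L^2(\Omega)$ against $4u_i^3$ and combine the result with the $L^1(M)$-integration of the $v_i$-equations; this is precisely the identity \eqref{gg3}. The diffusion contribution in $\Omega$ produces the nonnegative gradient terms $12d_i\|u_i\nabla u_i\|_{L^2(\Omega)}^2$. The interior reaction term collapses to $-(u_1^3-u_2^3)^2\le 0$ by the cross-cancellation of $f_1,f_2$. The boundary contribution is $4g_1 u_1^3+4g_2 u_2^3+h_1+h_2$, and a direct expansion combined with the Young inequality \eqref{gg2} yields an upper bound of the form $-C(u_1^6+u_2^6+v_1^6+v_2^6)+C$, where all remaining mixed monomials of order $<6$ are absorbed by $\eps(u_1^6+u_2^6)+C_\eps$. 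Integrating in $t$ over $(0,T)$ then gives $\sup_{i}\|u_i\|_{L^\infty(0,T;L^4(\Omega))}^4 + \sup_{i}\bigl(\|u_i\|_{L^6(M_T)}^6+\|v_i\|_{L^6(M_T)}^6\bigr)\le CT$, which is exactly \eqref{gg8} with $\F(T) = CT^{\max\{1/4,1/6\}}$.

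The main obstacle is bookkeeping: one must carefully collect all cubic and quartic cross-terms appearing in $4g_1u_1^3+4g_2u_2^3+h_1+h_2$ (including terms like $u_1^3 u_2^2$, $u_1 u_2^2$, $u_1^3$ and $u_2^3$) and check that each can be absorbed into $\eps(u_1^6+u_2^6)+C_\eps$ via Young's inequality with admissible exponents, so that a genuinely negative sixth-order bulk remains for both $u_i$ and $v_i$. Once this estimate is in hand, Theorem \ref{main:thm2} immediately supplies the unique global classical solution, completing the proof.
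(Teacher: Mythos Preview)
Your proposal is correct and follows essentially the same route as the paper: both derive the a-priori estimate \eqref{gg8} with $a=4$, $b=6$ from the energy identity \eqref{gg3} via the cancellation $f_1u_1^3+f_2u_2^3=-(u_1^3-u_2^3)^2$ and Young-inequality absorption of the boundary cross-terms, then invoke Theorem~\ref{main:thm2}. Your explicit verification of the exponent ranges \eqref{pOpM_ab}--\eqref{mM_ab} for $n\le 3$ is a useful addition that the paper leaves implicit; the only cosmetic point is that $\F(T)$ should carry an additive constant from the initial data (e.g.\ $\F(T)=C(1+T)^{1/4}$), but this does not affect the argument.
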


\appendix
\section{Technical lemmas}\label{appendix}
In this appendix, we prove two technical lemmas that are used in Lemma \ref{lemma:Loft}, more precisely the time derivative of the function $\H_p[u]$ and the integration by parts in \eqref{f3}--\eqref{a_ij}.
\begin{lemma}\label{Hp-lem7}
	Suppose $m_1\in \mathbb N$, $\theta= (\theta_1,\ldots, \theta_{m_1})$, where $\theta_1,...,\theta_{m_1}$ are positive real numbers, $\beta\in \mathbb Z_+^{m_1}$, and $\H_p[u]$ is defined in (\ref{Hp}). Then
	$$\frac{\partial}{\partial t}\H_0[u](t)=0,\text{ }\frac{\partial}{\partial t}\H_1[u](t)=\sum_{j=1}^{m_1}\theta_j\frac{\partial}{\partial t}u_j(t),$$  
	and for $p\in\mathbb N$ such that $p\ge 2$, 
	\begin{equation*}
	\frac{\partial}{\partial t}\H_p[u](t) = \sum_{|\beta| = p-1}\begin{pmatrix} p\\ \beta \end{pmatrix} \theta^{\beta^2}u(t)^{\beta}\sum_{j=1}^{m_1}\theta_j^{2\beta_j+1}\frac{\partial}{\partial t}u_j(t).
	\end{equation*}
\end{lemma}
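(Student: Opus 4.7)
The plan is a direct differentiation of the multi-index sum; no analysis is involved. The cases $p=0$ and $p=1$ are immediate by inspection: $\H_0[u]\equiv 1$, and for $|\beta|=1$ the only multi-indices are $\beta=e_j$ (the $j$-th standard unit vector in $\mathbb Z_+^{m_1}$), for which $\binom{1}{e_j}=1$, $\theta^{e_j^2}=\theta_j$, and $u^{e_j}=u_j$, so $\H_1[u]=\sum_{j=1}^{m_1}\theta_j u_j$ and the stated formula follows at once.

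For $p\geq 2$, I would apply the product rule to each monomial $u^\beta=\prod_{i=1}^{m_1}u_i^{\beta_i}$, which yields
\begin{equation*}
\partial_t u^\beta=\sum_{j\,:\,\beta_j\geq 1}\beta_j\,u^{\beta-e_j}\,\partial_t u_j.
\end{equation*}
Substituting into the definition of $\H_p[u]$ gives
\begin{equation*}
\partial_t\H_p[u]=\sum_{|\beta|=p}\binom{p}{\beta}\theta^{\beta^2}\sum_{j\,:\,\beta_j\geq 1}\beta_j\,u^{\beta-e_j}\,\partial_t u_j.
\end{equation*}

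The heart of the argument is then a change of index $\gamma:=\beta-e_j$, so that $|\gamma|=p-1$, $\beta_j=\gamma_j+1$, and $\beta_i=\gamma_i$ for $i\neq j$. Two elementary identities collapse the resulting sum. First, the multinomial identity
$\binom{p}{\beta}\beta_j=p\binom{p-1}{\gamma}$, which is verified by cancelling $\beta_j$ against $\beta_j!$ in the factorial formula. Second, the factorization
$\theta^{\beta^2}=\theta^{\gamma^2}\theta_j^{2\gamma_j+1}$, which follows from $(\gamma_j+1)^2=\gamma_j^2+2\gamma_j+1$ together with $\beta_i=\gamma_i$ for $i\neq j$. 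After inserting these, the restriction $\beta_j\geq 1$ disappears (since $\gamma_j\geq 0$ is automatic), the order of the sums in $j$ and $\gamma$ can be swapped, and one arrives at exactly the claimed formula, using the convention that $\binom{p}{\gamma}$ for $|\gamma|=p-1$ is read from the factorial formula $p!/(\gamma_1!\cdots\gamma_{m_1}!)$, equivalently $\binom{p}{\gamma}=p\binom{p-1}{\gamma}$.

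I do not anticipate any genuine obstacle: the proof is purely algebraic, and the only care required is bookkeeping with multi-indices and matching the coefficients produced by the re-indexing against those appearing in the target expression.
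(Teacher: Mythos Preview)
Your proof is correct and in fact more direct than the paper's. The paper proceeds by induction on the number of components $m_1$: it splits $\beta=(\beta_1,\tilde\beta)$, rewrites $\H_p[u]$ as a sum over $\beta_1$ of terms involving $\H_{p-\beta_1}[\tilde u]$, differentiates, and invokes the induction hypothesis for $m_1-1$ components. Your argument bypasses this entirely by differentiating each monomial and performing a single global re-indexing $\gamma=\beta-e_j$; the two algebraic identities you isolate (the multinomial identity $\binom{p}{\beta}\beta_j=\binom{p}{\gamma}$ in the paper's convention, and the exponent identity $\theta^{\beta^2}=\theta^{\gamma^2}\theta_j^{2\gamma_j+1}$) do all the work at once. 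Both proofs are elementary, but yours is shorter, avoids the bookkeeping of the inductive split, and makes transparent exactly where the factor $\theta_j^{2\beta_j+1}$ originates.
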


\begin{proof}
	The results for $\H_0[u](t)$ and $\H_1[u](t)$ are trivial. The same is true for the case when $m_1=1$. Suppose $p\ge 2$ and $m_1\ge 2$. We proceed by induction on the value $m_1$, and assume $k\in \mathbb N$ such that the result is is true for $m_1=k$. Suppose $m_1=k+1$ and denote
	$$\tilde\beta=(\beta_2,...,\beta_{m_1})\text{ and }\tilde u=(u_2,...,u_{m_1}).$$
	Then we can rewrite $\H_p[u]$ as
	\begin{equation}\label{Hp-eq1}
	\H_p[u] = \sum_{\beta_1=0}^p\frac{1}{\beta_1!}\theta_1^{\beta_1^2}u_1^{\beta_1}\sum_{|\tilde\beta|=p-\beta_1}\begin{pmatrix}
	p\\ \tilde\beta\end{pmatrix}\tilde\theta^{\tilde\beta^2}\tilde u^{\tilde\beta}.
	\end{equation}
	Consequently,
	\begin{align}\label{Hp-eq2}
	\frac{\partial}{\partial t}\H_p[u] &= \sum_{\beta_1=1}^p\frac{1}{\beta_1!}\theta_1^{\beta_1^2}\beta_1u_1^{\beta_1-1}\frac{\partial}{\partial t}u_1\sum_{|\tilde\beta|=p-\beta_1}\begin{pmatrix}
	p\\ \tilde\beta\end{pmatrix}\tilde\theta^{\tilde\beta^2}\tilde u^{\tilde\beta}\nonumber\\
	&+\sum_{\beta_1=0}^p\frac{1}{\beta_1!}\theta_1^{\beta_1^2}u_1^{\beta_1}\frac{\partial}{\partial t}\left(\sum_{|\tilde\beta|=p-\beta_1}\begin{pmatrix}
	p\\ \tilde\beta\end{pmatrix}\tilde\theta^{\tilde\beta^2}\tilde u^{\tilde\beta}\right)\nonumber\\
	&=\sum_{\beta_1=1}^p\frac{1}{\beta_1!}\theta_1^{\beta_1^2}\beta_1u_1^{\beta_1-1}\frac{\partial}{\partial t}u_1\sum_{|\tilde\beta|=p-\beta_1}\begin{pmatrix}
	p\\ \tilde\beta\end{pmatrix}\tilde\theta^{\tilde\beta^2}\tilde u^{\tilde\beta}\nonumber\\
	&+\sum_{\beta_1=0}^{p-1}\frac{1}{\beta_1!}\theta_1^{\beta_1^2}u_1^{\beta_1}\frac{p!}{(p-\beta_1)!}\frac{\partial}{\partial t}H_{p-\beta_1}[\tilde u].
	\end{align}
	Now, from our induction hypothesis, 
	\begin{equation}\label{Hp-eq3}
	\frac{\partial}{\partial t}\H_{p-\beta_1}[\tilde u] = \sum_{|\tilde\beta| = p-\beta_1-1}\begin{pmatrix} p-\beta_1\\ \tilde\beta \end{pmatrix} \tilde\theta^{\tilde\beta^2}\tilde u^{\tilde\beta}\sum_{j=1}^{m_1-1}\tilde\theta_j^{2\tilde\beta_j+1}\frac{\partial}{\partial t}\tilde u_j.
	\end{equation}
	Therefore, substituting (\ref{Hp-eq3}) into (\ref{Hp-eq2}), and noting that $\tilde u_j=u_{j+1}$ and $\tilde\theta_j=\theta_{j+1}$, gives 
%	\pagebreak
	\begin{align*}\label{Hp-eq4}
	\frac{\partial}{\partial t}\H_p[u] &=\sum_{\beta_1=1}^p\frac{1}{\beta_1!}\theta_1^{\beta_1^2}\beta_1u_1^{\beta_1-1}\frac{\partial}{\partial t}u_1\sum_{|\tilde\beta|=p-\beta_1}\begin{pmatrix}
	p\\ \tilde\beta\end{pmatrix}\tilde\theta^{\tilde\beta^2}\tilde u^{\tilde\beta}\nonumber\\
	&+\sum_{\beta_1=0}^{p-1}\frac{1}{\beta_1!}\theta_1^{\beta_1^2}u_1^{\beta_1}\frac{p!}{(p-\beta_1)!}\sum_{|\tilde\beta| = p-\beta_1-1}\begin{pmatrix} p-\beta_1\\ \tilde\beta \end{pmatrix} \tilde\theta^{\tilde\beta^2}\tilde u^{\tilde\beta}\sum_{j=1}^{m_1-1}\tilde\theta_j^{2\tilde\beta_j+1}\frac{\partial}{\partial t}\tilde u_j\nonumber\\
	&=\sum_{\beta_1=0}^{p-1}\frac{1}{\beta_1!}\theta_1^{(\beta_1+1)^2}u_1^{\beta_1}\frac{\partial}{\partial t}u_1\sum_{|\tilde\beta|=p-\beta_1-1}\begin{pmatrix}
	p\\ \tilde\beta\end{pmatrix}\tilde\theta^{\tilde\beta^2}\tilde u^{\tilde\beta}\nonumber\\
	&+\sum_{\beta_1=0}^{p-1}\frac{1}{\beta_1!}\theta_1^{\beta_1^2}u_1^{\beta_1}\frac{p!}{(p-\beta_1)!}\sum_{|\tilde\beta| = p-\beta_1-1}\begin{pmatrix} p-\beta_1\\ \tilde\beta \end{pmatrix} \tilde\theta^{\tilde\beta^2}\tilde u^{\tilde\beta}\sum_{j=1}^{m_1-1}\tilde\theta_j^{2\tilde\beta_j+1}\frac{\partial}{\partial t}\tilde u_j\nonumber\\
	&=\sum_{|\beta|=p-1}\begin{pmatrix}p\\\beta\end{pmatrix}\theta^{\beta^2}u^{\beta}\theta_1^{2\beta_1+1}\frac{\partial}{\partial t}u_1
	+\sum_{|\beta|=p-1}\begin{pmatrix}p\\\beta\end{pmatrix}\theta^{\beta^2}u^{\beta}\sum_{j=2}^{m_1}\theta_j^{2\beta_j+1}\frac{\partial}{\partial t}u_j\nonumber\\
	&=\sum_{|\beta|=p-1}\begin{pmatrix}p\\\beta\end{pmatrix}\theta^{\beta^2}u^{\beta}\sum_{j=1}^{m_1}\theta_j^{2\beta_j+1}\frac{\partial}{\partial t}u_j.
	\end{align*}
%	\endgroup
	Therefore, the result follows from induction.
\end{proof}

\begin{lemma}\label{Hp-lem8}
	Suppose $m_1\in \mathbb N$, $\theta= (\theta_1,\ldots, \theta_{m_1})$, where $\theta_1,...,\theta_{m_1}$ are positive real numbers. If $p\in\mathbb N$ such that $p\ge 2$, then
	$$\sum_{|\beta|=p-1}\begin{pmatrix}p\\ \beta\end{pmatrix}\theta^{\beta^2}\sum_{i=1}^{m_1}\theta_i^{2\beta_i+1}d_i\nabla u_i\cdot\nabla u^\beta =\sum_{|\beta|=p-2}\begin{pmatrix}p\\ \beta\end{pmatrix}\theta^{\beta^2}u^\beta\sum_{l=1}^n\sum_{i,j=1}^{m_1}a_{i,j}\frac{\partial u_i}{\partial x_l}\frac{\partial u_j}{\partial x_l},$$
	where $(a_{i,j})$ is the $m_1\times m_1$ symmetric matrix with entries
	\begin{align}
	a_{i,j}=\left\{
	\begin{matrix}
	\frac{d_i+d_j}{2}\theta_i^{2\beta_i+1}\theta_j^{2\beta_j+1},&\text{if }i\ne j\\
	d_i\theta_i^{4\beta_i+4},&\text{if }i=j
	\end{matrix}
	\right..
	\end{align}
\end{lemma}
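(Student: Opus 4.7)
The identity is purely combinatorial and I would prove it by a direct expansion followed by a change of summation variable and a symmetrization. First I would apply the Leibniz rule
\[
\nabla u^\beta \;=\; \sum_{j=1}^{m_1}\beta_j\, u^{\beta-e_j}\nabla u_j,
\]
where $e_j$ denotes the $j$-th standard unit vector in $\mathbb Z_+^{m_1}$, so that the left-hand side becomes
\[
\sum_{|\beta|=p-1}\binom{p}{\beta}\theta^{\beta^2}\sum_{l=1}^n\sum_{i,j=1}^{m_1}\beta_j\,\theta_i^{2\beta_i+1}d_i\,u^{\beta-e_j}\frac{\partial u_i}{\partial x_l}\frac{\partial u_j}{\partial x_l}.
\]

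Next, for each fixed $j$ I would reindex the outer sum by setting $\gamma=\beta-e_j$, which satisfies $|\gamma|=p-2$ whenever $\beta_j\ge 1$ (terms with $\beta_j=0$ carry the factor $\beta_j$ and drop out, so the substitution is clean). Using the convention $\binom{p}{\beta}=p!/\prod_k\beta_k!$ employed throughout the paper, one checks the two elementary identities
\[
\binom{p}{\beta}\beta_j \;=\; \binom{p}{\gamma},\qquad \theta^{\beta^2} \;=\; \theta^{\gamma^2}\,\theta_j^{2\gamma_j+1}.
\]
Substituting them gives
\[
\sum_{|\gamma|=p-2}\binom{p}{\gamma}\theta^{\gamma^2}u^{\gamma}\sum_{l=1}^n\sum_{j=1}^{m_1}\theta_j^{2\gamma_j+1}\frac{\partial u_j}{\partial x_l}\sum_{i=1}^{m_1}\theta_i^{2\gamma_i+1+2\delta_{ij}}d_i\frac{\partial u_i}{\partial x_l},
\]
and the inner $i$-sum cleanly splits into the diagonal contribution $i=j$, whose coefficient is $d_j\theta_j^{4\gamma_j+4}=a_{j,j}$, and an off-diagonal sum $\sum_{i\ne j}d_i\,\theta_i^{2\gamma_i+1}\theta_j^{2\gamma_j+1}\,\partial_l u_i\,\partial_l u_j$.

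Finally, because the factor $\partial_l u_i\,\partial_l u_j$ is symmetric under $i\leftrightarrow j$, I would symmetrize the off-diagonal double sum by averaging it with its transpose (obtained by relabeling $i\leftrightarrow j$), which replaces the coefficient $d_i$ by $(d_i+d_j)/2$ and yields exactly the $a_{i,j}$ stated in the lemma. The whole argument is elementary bookkeeping; the only mildly subtle point is the identity $\binom{p}{\beta}\beta_j=\binom{p}{\gamma}$ under the non-standard convention (the multi-indices here do not sum to $p$), which I would verify directly from the factorial definition before proceeding. No analytic difficulty is anticipated.
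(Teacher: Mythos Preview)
Your proposal is correct. The paper establishes the identity by induction on $m_1$: it peels off the first coordinate, writing $\beta=(\beta_1,\tilde\beta)$ and summing over $\beta_1$ separately, then applies the inductive hypothesis to the remaining $m_1-1$ variables and reassembles the pieces through a somewhat intricate case analysis (the terms $\mathrm{I}_{1,1}$, $\mathrm{I}_{i,1}$, and $\mathrm{II}$). Your direct argument---Leibniz rule, the single reindexing $\gamma=\beta-e_j$ together with the factorial identity $\binom{p}{\beta}\beta_j=\binom{p}{\gamma}$ and $\theta^{\beta^2}=\theta^{\gamma^2}\theta_j^{2\gamma_j+1}$, then symmetrizing the off-diagonal part---is more economical and makes the combinatorics transparent in one pass. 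The only point worth flagging explicitly, which you already note, is that the ``multinomial'' symbol here is the nonstandard $p!/\prod_k\beta_k!$ with $|\beta|\ne p$; once that identity is checked from the factorial definition the rest is routine. Both approaches yield the same symmetric matrix $(a_{i,j})$, but yours avoids the inductive bookkeeping entirely.
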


\begin{proof}
	The result is easily verified when $m_1=1$, regardless of the choice of $p$, and for $p=2$, regardless of the choice of $m_1$. Suppose $p\ge 2$ and $m_1\ge 2$. We proceed by induction on the value $m_1$, and assume $k\in \mathbb N$ such that the result is is true for $m_1=k$. Suppose $m_1=k+1$ and (as in the proof of Lemma \ref{Hp-lem7}) denote
	$$\tilde\beta=(\beta_2,...,\beta_{m_1})\text{ and }\tilde u=(u_2,...,u_{m_1}).$$
	Then
	\begin{align}\label{Hp-eq5}
	\sum_{|\beta|=p-1}\begin{pmatrix}p\\ \beta\end{pmatrix}\theta^{\beta^2}\sum_{i=1}^{m_1}\theta_i^{2\beta_i+1}d_i\nabla u_i\cdot\nabla u^\beta &=\sum_{\beta_1=0}^{p-1}\sum_{|\tilde\beta|=p-\beta_1-1}\begin{pmatrix}p\\ \beta\end{pmatrix}\theta_1^{\beta_1^2}\tilde\theta^{\tilde\beta^2}\biggl[\theta_1^{2\beta_1+1}d_1\nabla u_1\cdot\nabla u^\beta\nonumber\\
	&+\sum_{i=1}^{m_1-1}\tilde\theta_i^{2\tilde\beta_i+1}d_{i+1}\nabla\tilde u_i\cdot\nabla\left(u_1^{\beta_1}\tilde u^{\tilde\beta}\right)\biggr].
	\end{align}
	Note that for $1\le \beta_1\le p-2$ and $|\tilde\beta|=p-\beta_1-1$
	\begin{align}\label{Hp-eq6}
	\nabla\left(u_1^{\beta_1}\tilde u^{\tilde\beta}\right)=\beta_1 u_1^{\beta_1-1}\tilde u^{\tilde\beta}\nabla u_1+\sum_{j=1,\tilde\beta_j\ne 0}^{m_1-1}\tilde\beta_j u_1^{\beta_1} \tilde u^{\tilde\beta-e_j}\nabla \tilde u_j.
	\end{align}
	Therefore, from (\ref{Hp-eq5}) and (\ref{Hp-eq6}), we have
	\begin{align}\label{Hp-eq7}
	\sum_{|\beta|=p-1}\begin{pmatrix}p\\ \beta\end{pmatrix}\theta^{\beta^2}\sum_{i=1}^{m_1}\theta_i^{2\beta_i+1}d_i\nabla u_i\cdot\nabla u^\beta=\text{I}+\text{II},
	\end{align}
	where
	\begin{align}\label{Hp-eq8}
	\text{I}=&\sum_{\beta_1=0}^{p-1}\sum_{|\tilde\beta|=p-\beta_1-1}\begin{pmatrix}p\\\beta\end{pmatrix}\theta_1^{\beta_1^2}\tilde\theta^{\tilde\beta^2}\biggl[\theta_1^{2\beta_1+1}d_1\nabla u_1\cdot \nabla u^{\beta}\nonumber\\
	&+\sum_{i=1,\beta_1\ne 0}^{m_1-1}\tilde\theta_i^{2\tilde\beta_i+1}d_{i+1}\nabla \tilde u_i\cdot\beta_1 u_1^{\beta_1-1}\tilde u^{\tilde\beta}\nabla u_1\biggr]
	\end{align}
	and
	\begin{align}\label{Hp-eq9}
	\text{II}=\sum_{\beta_1=0}^{p-2}\sum_{|\tilde\beta|=p-\beta_1-1}\begin{pmatrix}p\\\beta\end{pmatrix}\theta_1^{\beta_1^2}\tilde\theta^{\tilde\beta^2}\sum_{i,j=1,\tilde\beta_j\ne 0}^{m_1-1}\tilde\theta_i^{2\tilde\beta_i+1}d_{i+1}\nabla \tilde u_i\cdot\tilde\beta_ju_1^{\beta_1}\tilde u^{\tilde\beta-e_j}\nabla \tilde u_j.
	\end{align}
	Above, $e_j$ denotes row $j$ of the $(m_1-1)\times (m_1-1)$ identity matrix. We start with the analysis of II. We can rewrite
	\begin{align}\label{Hp-eq10}
	\text{II}&=\sum_{\beta_1=0}^{p-2}\frac{1}{\beta_1!}\theta_1^{\beta_1^2}u_1^{\beta_1}\sum_{|\tilde\beta|=p-\beta_1-1}\begin{pmatrix}p\\\tilde\beta\end{pmatrix}\tilde\theta^{\tilde\beta^2}\sum_{i,j=1,\tilde\beta_j\ne 0}^{m_1-1}\tilde\theta_i^{2\tilde\beta_i+1}d_{i+1}\nabla \tilde u_i\cdot\tilde\beta_j\tilde u^{\tilde\beta-e_j}\nabla \tilde u_j\nonumber\\
	&=\sum_{\beta_1=0}^{p-2}\frac{1}{\beta_1!}\theta_1^{\beta_1^2}u_1^{\beta_1}\sum_{|\tilde\beta|=p-\beta_1-1}\begin{pmatrix}p\\\tilde\beta\end{pmatrix}\tilde\theta^{\tilde\beta^2}\sum_{i,j=1,\tilde\beta_j\ne 0}^{m_1-1}\tilde\theta_i^{2\tilde\beta_i+1}d_{i+1}\nabla \tilde u_i\cdot \nabla\tilde u^{\tilde\beta}\nonumber\\
	&=\sum_{\beta_1=0}^{p-2}\frac{1}{\beta_1!}\theta_1^{\beta_1^2}u_1^{\beta_1}\frac{p!}{(p-\beta_1)!}\sum_{|\tilde\beta|=p-\beta_1-1}\begin{pmatrix}p-\beta_1\\\tilde\beta\end{pmatrix}\tilde\theta^{\tilde\beta^2}\sum_{i,j=1,\tilde\beta_j\ne 0}^{m_1-1}\tilde\theta_i^{2\tilde\beta_i+1}d_{i+1}\nabla \tilde u_i\cdot \nabla\tilde u^{\tilde\beta}\nonumber\\
	&=\sum_{\beta_1=0}^{p-2}\frac{1}{\beta_1!}\theta_1^{\beta_1^2}u_1^{\beta_1}\frac{p!}{(p-\beta_1)!}\sum_{|\tilde\beta|=p-\beta_1-2}\begin{pmatrix}p-\beta_1\\ \tilde\beta\end{pmatrix}\tilde\theta^{\tilde\beta^2}\tilde u^{\tilde\beta}\sum_{l=1}^n\sum_{i,j=1}^{m_1-1} a_{i+1,j+1}\frac{\partial \tilde u_i}{\partial x_l}\frac{\partial \tilde u_j}{\partial x_l}\nonumber\\
	&=\sum_{|\beta|=p-2}\begin{pmatrix}p\\\beta\end{pmatrix}\theta^{\beta^2}u^\beta\sum_{l=1}^n\sum_{i,j=2}^{m_1}a_{i,j}\frac{\partial u_i}{\partial x_l}\frac{\partial u_j}{\partial x_l}
	\end{align}
	where the last step follows from the induction hypothesis. Now let's investigate I. We begin by expanding the $\nabla u^\beta$ term to find
	\begin{align}\label{Hp-eq11}
	\text{I}=&\sum_{\beta_1=0}^{p-1}\sum_{|\tilde\beta|=p-\beta_1-1}\begin{pmatrix}p\\\beta\end{pmatrix}\theta_1^{\beta_1^2}\tilde\theta^{\tilde\beta^2}\biggl[\theta_1^{2\beta_1+1}d_1\nabla u_1\cdot \biggl(\beta_1u_1^{\beta_1-1}\tilde u^{\tilde\beta}\nabla u_1\nonumber\\
	&+\sum_{i=1}^{m_1-1}\tilde\beta_iu_1^{\beta_1}\tilde u^{\tilde\beta-e_i}\nabla \tilde u_i\biggr)+\sum_{i=1,\beta_1\ne 0}^{m_1-1}\tilde\theta_i^{2\tilde\beta_i+1}d_{i+1}\nabla \tilde u_i\cdot\beta_1 u_1^{\beta_1-1}\tilde u^{\tilde\beta}\nabla u_1\biggr]\nonumber\\
	=&\text{I}_{1,1}+\sum_{i=2}^{m_1}\text{I}_{i,1},
	\end{align}
	where 
	\begin{align}\label{Hp-eq12}
	\text{I}_{1,1}&=\sum_{\beta_1=1}^{p-1}\sum_{|\tilde\beta|=p-\beta_1-1}\begin{pmatrix}p\\\beta\end{pmatrix}\theta_1^{\beta_1^2}\tilde\theta^{\tilde\beta^2}\sum_{l=1}^n \theta_1^{2\beta_1+1}d_1\beta_1u_1^{\beta_1-1}\tilde u^{\tilde\beta}\left(\frac{\partial u_1}{\partial x_l}\right)^2\nonumber\\
	&=\sum_{\beta_1=1}^{p-1}\sum_{|\tilde\beta|=p-(\beta_1-1)-2}\begin{pmatrix}p\\ (\beta_1-1,\tilde\beta) \end{pmatrix}\theta^{(\beta_1-1,\tilde\beta)^2} u^{(\beta_1-1,\tilde\beta)} \sum_{l=1}^n \theta_1^{4(\beta_1-1)+4}d_1\left(\frac{\partial u_1}{\partial x_l}\right)^2\nonumber\\
	&=\sum_{\beta_1=0}^{p-2}\sum_{|\tilde\beta|=p-\beta_1-2}\begin{pmatrix}p\\ \beta \end{pmatrix}\theta^{\beta^2} u^{\beta} \sum_{l=1}^n \theta_1^{4\beta_1+4}d_1\left(\frac{\partial u_1}{\partial x_l}\right)^2\nonumber\\
	&=\sum_{|\beta|=p-2}\begin{pmatrix}p\\ \beta \end{pmatrix}\theta^{\beta^2} u^{\beta} \sum_{l=1}^n \theta_1^{4\beta_1+4}d_1\left(\frac{\partial u_1}{\partial x_l}\right)^2,
	\end{align}
	and for $i\in\{2,...,m_1\}$,
	\begin{align}\label{Hp-eq13}
	\text{I}_{i,1}&=\sum_{\beta_1=0}^{p-1}\sum_{|\tilde\beta|=p-\beta_1-1}\begin{pmatrix}p\\\beta\end{pmatrix}\theta^{\beta^2}\sum_{l=1}^n \left[ \theta_1^{2\beta_1+1}d_1\tilde\beta_{i-1}u_1^{\beta_1}\tilde u^{\tilde\beta-e_{i-1}}+\tilde\theta_{i-1}^{2\tilde\beta_{i-1}+1}d_i\beta_1u_1^{\beta_1-1}\tilde u^{\tilde\beta} \right]\frac{\partial u_1}{\partial x_l}\frac{\partial u_i}{\partial x_l}\nonumber\\
	&=\sum_{\beta_1=0}^{p-2}\sum_{|\tilde\beta|=p-\beta_1-1,\tilde\beta_{i-1}\ne 0}\begin{pmatrix}p\\(\beta_1,\tilde\beta-e_{i-1})\end{pmatrix}\theta^{\beta^2}\sum_{l=1}^n\theta_1^{2\beta_1+1}d_1u_1^{\beta_1}\tilde u^{\tilde\beta-e_{i-1}}\frac{\partial u_1}{\partial x_l}\frac{\partial u_i}{\partial x_l}\nonumber\\
	&+\sum_{\beta_1=1}^{p-1}\sum_{|\tilde\beta|=p-\beta_1-1}\begin{pmatrix}p\\(\beta_1-1,\tilde\beta)\end{pmatrix}\theta^{\beta^2}\sum_{l=1}^n\theta_i^{2\beta_i+1}d_iu_1^{\beta_1-1}\tilde u^{\tilde\beta}\frac{\partial u_1}{\partial x_l}\frac{\partial u_i}{\partial x_l}\nonumber\\
	&=\sum_{|\beta|=p-2}\begin{pmatrix}p\\\beta\end{pmatrix}\theta^{\beta^2}u^\beta\sum_{l=1}^n\theta_1^{2\beta_1+1}\theta_i^{2\beta_i+1}d_1\frac{\partial u_1}{\partial x_l}\frac{\partial u_i}{\partial x_l}\nonumber\\
	&+\sum_{|\beta|=p-2}\begin{pmatrix}p\\\beta\end{pmatrix}\theta^{\beta^2}u^\beta\sum_{l=1}^n\theta_1^{2\beta_1+1}\theta_i^{2\beta_i+1}d_i\frac{\partial u_1}{\partial x_l}\frac{\partial u_i}{\partial x_l}\nonumber\\
	&=\sum_{|\beta|=p-2}\begin{pmatrix}p\\\beta\end{pmatrix}\theta^{\beta^2}u^\beta\sum_{l=1}^n\theta_1^{2\beta_1+1}\theta_i^{2\beta_i+1}(d_1+d_i)\frac{\partial u_1}{\partial x_l}\frac{\partial u_i}{\partial x_l}.
	\end{align}
	The result follows by combining (\ref{Hp-eq7}), (\ref{Hp-eq8}), (\ref{Hp-eq10}), (\ref{Hp-eq11}), (\ref{Hp-eq12}) and (\ref{Hp-eq13}). 
\end{proof}

\medskip
\noindent{\bf Acknowledgement.} This work is partially supported by NAWI Graz, and the International Research Training Group IGDK 1754 ``Optimization and Numerical Analysis for Partial Differential Equations with Nonsmooth Structures'', funded by the German Research Council (DFG) project number 188264188/GRK1754 and the Austrian Science Fund (FWF) under grant number W 1244-N18.

\end{document}